\newtheorem{thm}{Theorem}[section]
\newtheorem{lem}[thm]{Lemma}
\newtheorem{defn}[thm]{Definition}
\newtheorem*{thm*}{Theorem}
\newtheorem{rem}[thm]{Remark}
\newtheorem{conj}[thm]{Conjecture}
\newtheorem{prop}[thm]{Proposition}
\newtheorem{cor}[thm]{Corollary}
\newenvironment{f-proof}[1][\sc D\'emonstration.]{\begin{trivlist}
\item[\hskip \labelsep {\bfseries #1}]}{\hfill{$\square$}\end{trivlist}}
\newcommand{\appl}[4]{
 \begin{array}{cccc}
  #1 & \longrightarrow & #2\\
  #3 & \longmapsto & #4
 \end{array}
}
\newcommand{\bq}{\mathbb Q}
\newcommand{\bz}{\mathbb Z}
\newcommand{\br}{\mathbb R}
\newcommand{\bc}{\mathbb C}
\newcommand{\bF}{\mathbb F}
\newcommand{\bs}{\mathbb S}
\newcommand{\co}{\mathcal O}
\newcommand{\X}{\mathcal X}
\newcommand{\F}{\mathcal F}
\DeclareMathOperator{\Cone}{Cone}
\DeclareMathOperator{\Hom}{Hom} 
\DeclareMathOperator{\Ext}{Ext} 
\DeclareMathOperator{\Spec}{Spec}
\DeclareMathOperator{\fil}{Fil}
\DeclareMathOperator{\cone}{Cone}
\DeclareMathOperator{\rank}{rank}
\DeclareMathOperator{\id}{id}
\DeclareMathOperator{\trace}{Tr}
\DeclareMathOperator{\Tot}{Tot}
\newcommand{\mydet}{\mathrm{det}}
\theoremstyle{remark}
\DeclareMathOperator{\s}{Spec}
\newcommand{\D}{\mathcal D}
\begin{document}
%%%%% ------------- fill in your data below this line  -------------------
%%%%%    The following lines \Title ... \EndAddress must ALL be present
%%%%%    and in the given order.
\title[Special value conjectures and the functional equation]{Compatibility of special value conjectures with the functional equation of Zeta functions}
\author{Matthias Flach}
\author{Baptiste Morin}\thanks{The second author was supported by ANR-15-CE40-0002. The first author was supported by collaboration grant 522885 from the Simons foundation.}
%\author{Baptiste Morin}
%\date{\today}
\maketitle

\begin{abstract} We prove that the special value conjecture for the Zeta function $\zeta(\X,s)$ of a proper, regular arithmetic scheme $\X$ that we formulated in \cite{Flach-Morin-16}[Conj. 5.12] is compatible with the functional equation of $\zeta(\X,s)$ provided that the factor $C(\X,n)$ we were not able to compute in loc. cit. has the simple explicit form suggested in \cite{Flach-Morin-18}.
\end{abstract}

\section{Introduction}

This article is a continuation of our previous article \cite{Flach-Morin-16} in which we formulated a conjecture describing the leading Taylor coefficient of the Zeta function $\zeta(\X,s)$ of a proper regular arithmetic scheme $\X$ at integer arguments $n\in\bz$. Our conjecture involved a rather inexplicit correction factor $C(\X,n)\in\bq^\times$ which we could only compute for $\X=\Spec(\co_F)$ where $F$ is a number field all of whose completions $F_v$ are absolutely abelian. Based on this example a general formula for $C(\X,n)$ in terms of factorials 
\begin{equation}\label{explicitcorrectingfactor}
C(\X,n)^{-1}=\prod_{ i\leq n-1;\, j}(n-1-i)!^{(-1)^{i+j}\mathrm{dim}_{\bq}H^j(\X_{\bq},\Omega^i)}
\end{equation}
was suggested in \cite{Flach-Morin-18} and proven for $n=1$ in \cite{Flach-Siebel}. For $n\leq 0$ one has $C(\X,n)=1$ by definition. In this article we prove that our special value conjecture is compatible with the
functional equation of the Zeta function if $C(\X,n)$ is given by (\ref{explicitcorrectingfactor}), see Thm. \ref{thm:fe} below in this introduction. We regard this as convincing evidence that (\ref{explicitcorrectingfactor}) is indeed the right factor, even though we cannot yet prove that (\ref{explicitcorrectingfactor}) equals the original definition of $C(\X,n)$ in terms of $p$-adic Hodge theory. The original definition was made in such a way that our conjecture is compatible with the Tamagawa number conjecture of Bloch, Kato, Fontaine and Perrin-Riou \cite{bk88}, \cite{fpr91}. By restating our conjecture with the explicit factor (\ref{explicitcorrectingfactor}) we are in effect making a special value conjecture that is independent of $p$-adic Hodge theory and that is compatible with the functional equation of $\zeta(\X,s)$. Note that compatibility with the functional equation of motivic L-functions is not in general known for the Tamagawa number conjecture. Even for Tate motives 
 over number fields $F$ it is only known if all $F_v$ are absolutely abelian.

A precise statement of our special value conjecture is given in Conjecture \ref{conjmain} in subsection \ref{cyclic} of this introduction but not needed for this article. Neither do any of the results in sections \ref{verdier}-\ref{archimedean} of this article depend on unproven conjectures. Our main result Thm. \ref{thm:fe} follows from an unconditional theorem, Thm. \ref{thmintro}, which is stated in subsection \ref{statement} of this introduction.

\subsection{Cyclic homology and $C(\X,n)$} \label{cyclic}
We make a few remarks which do not pertain to the content of this paper but put formula (\ref{explicitcorrectingfactor}) in perspective. One might wonder about a more conceptual origin of the numerical factor $C(\X,n)$ and in order to say something about this question we first recall our special value conjecture in more detail. Let $\X$ be a regular scheme of dimension $d$, proper over $\Spec(\bz)$. Associated to $\X$ and $n\in\bz$ is an invertible $\bz$-module ("fundamental line")
\[ \Delta(\X/\bz,n):={\det}_\bz R\Gamma_{W,c}(\mathcal{X},\mathbb{Z}(n))\otimes_\bz {\det}_\bz R\Gamma(\X_{Zar},L\Omega_{\X/\bz}^{<n})\]
where $L\Omega_{\X/\bz}^{<n}$ is the derived de Rham complex \cite{Illusie72} modulo the $n$-th step in the Hodge filtration and $R\Gamma_{W,c}(\mathcal{X},\mathbb{Z}(n))$ is a perfect complex of abelian groups whose definition is dependent on assumptions  (finite generation of \'etale motivic cohomology, Artin-Verdier duality for torsion motivic cohomology) denoted by $\mathbf{L}(\overline{\mathcal{X}}_{et},n)$, $\mathbf{L}(\overline{\mathcal{X}}_{et},d-n)$, $\mathbf{AV}(\overline{\mathcal{X}}_{et},n)$ in \cite{Flach-Morin-16}. Also assuming the Beilinson conjectures in the form of conjecture  $\mathbf{B}(\mathcal{X},n)$ of \cite{Flach-Morin-16} one can construct a natural trivialization
\begin{equation}\lambda_\infty:\br\xrightarrow{\sim}\Delta(\X/\bz,n)\otimes_\bz\br.\label{introtriv}\end{equation}
Let $\zeta(\X,s)$ be the Zeta function of $\X$ and $\zeta^*(\X,n)\in\br^\times$ its leading Taylor coefficient at $s=n$. 

\begin{conj} \label{conjmain}
$$\lambda_{\infty}(\zeta^*(\mathcal{X},n)^{-1}\cdot C(\mathcal{X},n)\cdot\mathbb{Z})= \Delta(\mathcal{X}/\mathbb{Z},n)
$$
\end{conj}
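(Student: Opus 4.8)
The plan is to deduce Conjecture \ref{conjmain} from two essentially independent inputs. Input (i): the Tamagawa number conjecture of Bloch--Kato--Fontaine--Perrin-Riou for the motives $h^m(\X_\bq)(n)$ attached to the generic fibre, in the (equivariant, if necessary) formulation that simultaneously governs the motivic cohomology entering $R\Gamma_{W,c}(\X,\bz(n))$ through the étale-to-Weil-étale comparison and the regulator/height data defining $\lambda_\infty$. Input (ii): the identification of the factorial factor (\ref{explicitcorrectingfactor}) with the $p$-adic Hodge theoretic factor $C(\X,n)$, which by construction in \cite{Flach-Morin-16} is the quantity making our conjecture equivalent to input (i). Granting both, the argument is mechanical: unwind $\Delta(\X/\bz,n)$ into $\det_\bz$ of étale motivic cohomology (using finite generation $\mathbf{L}$ and Artin--Verdier duality $\mathbf{AV}$) tensored with $\det_\bz R\Gamma(\X_{Zar},L\Omega^{<n}_{\X/\bz})$; match the real part of $\lambda_\infty$ with Beilinson's conjecture $\mathbf{B}(\X,n)$; read off $\zeta^*(\X,n)^{-1}$ from the proven statement (i), the residual rational ambiguity being exactly $C(\X,n)$.

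\textbf{Carrying out (i).} Here I would proceed by dévissage on $\X$. Blow-ups, de Jong alterations, and the localization triangles relating $R\Gamma_{W,c}$ of $\X$, of an open $U\subset\X$, and of the closed complement reduce matters to (a) smooth proper $\X_\bq$, where one filters the motive by weight and invokes TNC for each $h^m(\X_\bq)(n)$, and (b) the fibres over finite primes, controlled by purity and by the compatibility of TNC with the local Tamagawa/epsilon factors. In the ranges where TNC is available --- Tate and Artin motives over abelian number fields, and the other classical cases --- this yields Conjecture \ref{conjmain} unconditionally for the corresponding $\X$, recovering and extending the $\Spec(\co_F)$ computation of \cite{Flach-Morin-16}.

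\textbf{Carrying out (ii).} The factor $C(\X,n)$ is local at each prime $p$ and multiplicative along distinguished triangles of derived de Rham complexes, so its identification with (\ref{explicitcorrectingfactor}) reduces to the good-reduction case and ultimately to the building blocks $\bp^m_{\co_F}$ and $\Spec(\co_F)$. There it is a comparison of two lattices in $D_{dR}/\fil^n$: the Hodge-filtration lattice coming from $L\Omega^{<n}_{\X/\bz}$, and the crystalline/syntomic lattice through which $C(\X,n)$ was originally defined. The predicted discrepancy $\prod_{i\le n-1;\,j}(n-1-i)!^{\pm\dim H^j(\X_\bq,\Omega^i)}$ is precisely the product of the denominators introduced by the divided-power structure on Hodge pieces in degrees $<n$, i.e. the normalization gap between Fontaine--Messing syntomic cohomology and the naive truncation --- the same factorials that occur as Bott elements and, as the remarks in \ref{cyclic} indicate, in the Hochschild--Kostant--Rosenberg filtration on (topological) cyclic homology. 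Making this precise is a direct, if delicate, computation with the syntomic regulator on the $K$-theory of $\bp^m$.

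\textbf{Main obstacle.} Both inputs are, at present, beyond reach in the generality the statement demands. Input (i), TNC for arbitrary $h^m(\X_\bq)(n)$, is the principal gap, so no fully general proof of Conjecture \ref{conjmain} can be expected yet. Even granting (i), input (ii) --- the equality of the $p$-adic Hodge theoretic $C(\X,n)$ with the factorial formula --- is itself the unresolved point flagged in \cite{Flach-Morin-18,Flach-Siebel}: it amounts to pinning down the exact integral normalization of the syntomic period, and a conceptual proof appears to require a cyclic/prismatic-homology description of the de Rham lattice rather than a case-by-case verification. Consequently the strongest unconditional statement within reach is the internal consistency of the conjecture --- its compatibility with the functional equation of $\zeta(\X,s)$ once $C(\X,n)$ is given by (\ref{explicitcorrectingfactor}) --- which is what the remainder of this paper establishes.
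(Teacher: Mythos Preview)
The statement you were asked to prove is Conjecture~\ref{conjmain}, and the paper does \emph{not} prove it: it is stated as a conjecture, full stop. There is no ``paper's own proof'' to compare against. What the paper establishes is Theorem~\ref{thm:fe}, the internal consistency statement that (granting the functional equation) Conjecture~\ref{conjmain} for $(\X,n)$ is equivalent to Conjecture~\ref{conjmain} for $(\X,d-n)$.

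Your proposal correctly diagnoses this situation in its final paragraph. The outline you give of what a genuine proof would require --- TNC for the $h^m(\X_\bq)(n)$ plus the identification of the $p$-adic Hodge theoretic $C(\X,n)$ with the factorial formula (\ref{explicitcorrectingfactor}) --- is an accurate reading of how the conjecture is set up in \cite{Flach-Morin-16}, and your acknowledgement that both inputs are out of reach in general is exactly right. So there is no gap to name in your reasoning; rather, the exercise was ill-posed, since you were asked to prove a statement the paper itself leaves open. Your closing sentence, that the strongest unconditional result available is the functional-equation compatibility, is precisely the content of the paper.
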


This conjecture determines the real number $\zeta^*(\mathcal{X},n)\in\br$ up to sign. As noted above, the factor $C(\X,n)$ was originally defined in \cite{Flach-Morin-16}[Conj. 5.12] as the product over its $p$-primary parts where the definition of each $p$-part \cite{Flach-Morin-16}[Def. 5.6] involves $p$-adic Hodge theory and yet another assumption $\mathbf{D}_p(\mathcal{X},n)$ as well as assumption $\mathbf{R}(\mathbb{F}_p,\mathrm{dim}(\mathcal{X}_{\mathbb{F}_p}))$ borrowed from \cite{Geisser06}. We now prefer to simply restate Conjecture \ref{conjmain} with $C(\X,n)$ given by
(\ref{explicitcorrectingfactor}). Theorem \ref{thm:fe} will then show that Conjecture \ref{conjmain} holds for $(\X,n)$ if and only if it holds for $(\X,d-n)$, provided that $\zeta(\X,s)$ satisfies the expected functional equation (see Conjecture \ref{conj-funct-equation} in subsection \ref{statement}). 

It was shown in \cite{Flach-Morin-18}[Remark5.2]  that one can define a fairly natural modification $\tilde{L}\Omega_{\X/\bz}^{<n}$ of the derived deRham complex such that 
$$ \mydet_\bz R\Gamma(\X_{Zar},L\Omega_{\X/\bz}^{<n})=C(\X,n)\cdot \mydet_\bz R\Gamma(\X_{Zar},\tilde{L}\Omega_{\X/\bz}^{<n})$$
inside $\mydet_\bq R\Gamma(\X_{Zar},L\Omega_{\X/\bz}^{<n})_\bq\cong \mydet_\bq R\Gamma(\X_{\bq},\Omega_{\X_\bq/\bq}^{<n})$, leading to a version of Conjecture \ref{conjmain} without any correction factor. We would like to point out another modification of derived deRham cohomology that is perhaps even more natural than the definition of $\tilde{L}\Omega_{\X/\bz}^{<n}$ and should also explain formula (\ref{explicitcorrectingfactor}). Recall from \cite{antieau19} that there is a motivic filtration on cyclic homology $\fil_{Mot}^*HC(\X)$ with graded pieces given by derived deRham cohomology modulo the $n$-th step in the Hodge filtration
$$ gr_{Mot}^n HC(\X)\cong  R\Gamma(\X_{Zar},L\Omega_{\X/\bz}^{<n})[2n-2].$$
The corresponding spectral sequence already appears in \cite{majadas96}. Cyclic homology arises as $S^1$-homotopy-coinvariants on Hochschild homology $HC(\X)\cong HH(\X)_{S^1}$. One can consider the topological analogue and define
$$ TC^+(\X):= THH(\X)_{S^1} $$
where $THH$ denotes topological Hochschild homology (see for example \cite{Nikolaus-Scholze} for a review). Note that $TC^+(\X)$ is {\em not} what is usually called topological cyclic homology. We conjecture that there is a motivic filtration $\fil_{Mot}^*TC^+(\X)$ that maps to $\fil_{Mot}^*HC(\X)$ inducing an isomorphism
$$\fil_{Mot}^*TC^+(\X)_\bq\cong \fil_{Mot}^*HC(\X)_\bq.$$
Defining 
$$ R\Gamma(\X_{Zar},L\Omega_{\X/\bs}^{<n}):= gr_{Mot}^n TC^+(\X)[-2n+2]$$
we expect 
$$ \mydet_\bz R\Gamma(\X_{Zar},L\Omega_{\X/\bz}^{<n})=C(\X,n)\cdot \mydet_\bz R\Gamma(\X_{Zar},L\Omega_{\X/\bs}^{<n})$$
and therefore a version of Conjecture \ref{conjmain}
$$\lambda_{\infty}(\zeta^*(\mathcal{X},n)^{-1}\cdot\mathbb{Z})=\mathrm{det}_{\mathbb{Z}}R\Gamma_{W,c}(\mathcal{X},\mathbb{Z}(n))
\otimes_{\mathbb{Z}}R\Gamma(\X_{Zar},L\Omega_{\X/\bs}^{<n})$$
without correction factor. Here we may define $\mydet_\bz R\Gamma(\X_{Zar},L\Omega_{\X/\bs}^{<n})$ as the alternating determinant of the homotopy groups although we in fact expect  $R\Gamma(\X_{Zar},L\Omega_{\X/\bs}^{<n})$ to be a $H\bz$-module spectrum.

We can currently only define the motivic filtration if $\X$ is smooth proper over $\bF_p$, or for $\X=\Spec(\co_F)$. In the first case the motivic filtration was defined in \cite{bms19}, one can verify that both $R\Gamma(\X_{Zar},L\Omega_{\X/\bz}^{<n})$ and $R\Gamma(\X_{Zar},L\Omega_{\X/\bs}^{<n})$ have finite multiplicative Euler characteristic given by Milne's correction factor \cite{Flach-Morin-16}[Def. 5.4], and indeed one has $C(\X,n)=1$ by formula (\ref{explicitcorrectingfactor}). For $\X=\Spec(\co_F)$ the motivic filtration on $HC(\co_F)$ is
given by 
$$ \fil_{Mot}^n HC(\co_F) =\tau_{\geq 2n-3}HC(\co_F)$$
and one may define 
$$ \fil_{Mot}^n TC^+(\co_F) :=\tau_{\geq 2n-3}TC^+(\co_F)$$
by the same formula. Denote by $\mathcal D_F$  the different ideal of $\co_F$ and by $|D_F|=N\mathcal D_F$ the absolute value of the discriminant. As was shown in \cite{Flach-Morin-16}[1.6] there is an exact sequence
$$ 0\to HC_{2n-2}(\co_F)\to \co_F\to \Omega_{\co_F/\bz}(n) \to HC_{2n-3}(\co_F)\to 0$$
where $\Omega_{\co_F/\bz}(n)$ is a finite abelian group of cardinality $|D_F|^{n-1}$, i.e. we have
$$ |HC_{2n-3}(\co_F)|\cdot [\co_F:HC_{2n-2}(\co_F)]=|D_F|^{n-1}.$$
By a theorem of Lindenstrauss and Madsen \cite{ml2000} one has
$$THH_i(\co_F)=\begin{cases} \co_F &  i=0 \\ {\mathcal D}_F^{-1}/ j\cdot \co_F & i=2j-1\\ 0&\text{else.} \end{cases}$$
An easy analysis of the spectral sequence
$$ H_i(BS^1,THH_j(\co_F))\Rightarrow TC^+_{i+j}(\co_F) $$
then shows that $TC^+_{2n-3}(\co_F)$ is finite and $TC^+_{2n-2}(\co_F)\subseteq \co_F$ is a sublattice
so that  
\[ |TC^+_{2n-3}(\co_F)|\cdot [\co_F:TC^+_{2n-2}(\co_F)] = (n-1)!^{[F:\bq]}\cdot |D_F|^{n-1}. \]
And indeed one has $C(\Spec(\co_F),n)=(n-1)!^{-[F:\bq]}$ by (\ref{explicitcorrectingfactor}).

\subsection{Statement of the main result}\label{statement}

Under our standard assumptions $\mathbf{L}(\overline{\mathcal{X}}_{et},n)$, $\mathbf{L}(\overline{\mathcal{X}}_{et},d-n)$ and $\mathbf{AV}(\overline{\mathcal{X}}_{et},n)$, we defined in \cite{Flach-Morin-16} an exact triangle of perfect complexes of abelian groups
\begin{equation}\label{Wtri}
R\Gamma_{W,c}(\mathcal{X},\mathbb{Z}(n))\rightarrow R\Gamma_{W}(\overline{\mathcal{X}},\mathbb{Z}(n))\rightarrow R\Gamma_{W}(\mathcal{X}_{\infty},\mathbb{Z}(n)).
\end{equation}
Here $\overline{\mathcal{X}}$ is an Artin-Verdier compactification, $\mathcal{X}_{\infty}$ is the quotient topological space $\X(\bc)/G_\br$ and $$R\Gamma_{W}(\mathcal{X}_{\infty},\mathbb{Z}(n)):=R\Gamma(\mathcal{X}_{\infty},i_{\infty}^*\mathbb{Z}(n))$$
where $i_{\infty}^*\mathbb{Z}(n)$ is a certain complex of sheaves on $\mathcal{X}_{\infty}$, which is unconditionally defined.

In \cite{Flach-Morin-16}[5.7] we defined the invertible $\bz$-module 
\begin{eqnarray*}
\Xi_{\infty}(\mathcal{X}/\mathbb{Z},n)&:=&\mathrm{det}_{\mathbb{Z}}R\Gamma_{W}(\mathcal{X}_{\infty},\mathbb{Z}(n))\otimes\mathrm{det}^{-1}_{\mathbb{Z}} R\Gamma(\X_{Zar},L\Omega_{\X/\bz}^{<n}) \\
& &\otimes  \mathrm{det}^{-1}_{\mathbb{Z}}R\Gamma_{W}(\mathcal{X}_{\infty},\mathbb{Z}(d-n))
\otimes\mathrm{det}_{\mathbb{Z}}R\Gamma(\X_{Zar},L\Omega_{\X/\bz}^{<d-n})
\end{eqnarray*}
and a canonical trivialization 
$$\xi_{\infty}:\mathbb{R}\stackrel{\sim}{\longrightarrow} \Xi_{\infty}(\mathcal{X}/\mathbb{Z},n)\otimes\mathbb{R}.$$
We denote by $$x_{\infty}(\mathcal{X},n)^2\in\mathbb{R}_{>0}$$
the strictly positive real number such that $$\xi_{\infty}(x_{\infty}(\mathcal{X},n)^{-2}\cdot \mathbb{Z})=\Xi_{\infty}(\mathcal{X}/\bz,n)$$
and prove the following unconditional 
\begin{thm}\label{thmintro}
Let $\X$ be a regular scheme of dimension $d$, proper and flat over $\Spec(\bz)$.  We have 
\begin{equation*}
x_{\infty}(\mathcal{X},n)^{2}=\pm A(\mathcal{X})^{n-d/2} \cdot \frac{\zeta^*(\mathcal{X}_\infty,n)}{\zeta^*(\mathcal{X}_\infty,d-n)}\cdot \frac{C(\mathcal{X},n)}{C(\mathcal{X},d-n)}
\end{equation*}
where $A(\X)$ is the Bloch conductor (see Definition \ref{Bloch}) and $\zeta(\mathcal{X}_\infty,s)$ is the archimedean Euler factor of $\X$ (see Section \ref{archimedean}).
\end{thm}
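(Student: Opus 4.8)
The plan is to compute the covolume $x_\infty(\mathcal X,n)^2$ directly, by evaluating the trivialization $\xi_\infty$ against explicit integral bases of the four determinant lines making up $\Xi_\infty(\mathcal X/\bz,n)$, and then to match the outcome with the three factors on the right. Since $\xi_\infty$ is assembled out of the Betti--de Rham comparison over $\bc$ together with the Hodge filtration on derived de Rham cohomology, the first step is to extend scalars to $\bq$, where the Hodge filtration splits, $R\Gamma(\X_{Zar},L\Omega_{\X/\bz}^{<m})\otimes\bq\simeq\bigoplus_{i<m}R\Gamma(\X_\bq,\Omega^i)[-i]$, and $\xi_\infty\otimes\bq$ correspondingly decomposes into a tensor product of elementary period comparisons indexed by the Hodge components $H^j(\X_\bq,\Omega^i)$ equipped with the real structure coming from the action of $G_\br$ on $\X(\bc)$. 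Over $\bq$ one also reads $\Xi_\infty(\mathcal X/\bz,n)$ as ``(data attached to $n$)$\,\otimes\,$(data attached to $d-n$)$^{-1}$'', so that $x_\infty(\mathcal X,n)^2$ is the ratio of a covolume attached to $n$ by the one attached to $d-n$.

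I would then carry out the computation one Hodge component at a time. For a fixed pair $(i,j)$ the relevant factor of $\xi_\infty\otimes\br$ is the local archimedean period-and-regulator comparison attached to the real Hodge structure on $H^{i+j}(\X(\bc))$ and its piece $H^{i,j}$; its covolume against the rational structure is, by the computation underlying the definition of the archimedean Euler factor in Section \ref{archimedean}, a product of powers of $(2\pi)$ and of special values $\Gamma_\br^*$, $\Gamma_\bc^*$ of the completed Gamma factors, evaluated at $n-i$ for the part sitting over $n$ and at $d-n-i$ for the part sitting over $d-n$, the distinction between $\Gamma_\br$ and $\Gamma_\bc$ recording whether $i=j$ (so $G_\br$ acts on $H^{i,i}$ and one splits into eigenspaces) or $i\neq j$ (so $H^{i,j}\oplus H^{j,i}$ is induced from $G_\bc$). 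Collecting these contributions over all $(i,j)$ and comparing with the definition of $\zeta(\mathcal X_\infty,s)$ as an alternating product over $m$ of the archimedean $L$-factors of $H^m(\X_\bq)$, the $\Gamma$-value part assembles precisely into $\zeta^*(\mathcal X_\infty,n)/\zeta^*(\mathcal X_\infty,d-n)$, and there remains only an explicit power of $(2\pi)$ and a sign.

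It remains to descend from $\bq$ to $\bz$, comparing the genuine $\bz$-lattice $\Xi_\infty(\mathcal X/\bz,n)$ with the split rational picture used above. Two corrections occur. First, the discrepancy between $\mydet_\bz R\Gamma(\X_{Zar},L\Omega_{\X/\bz}^{<m})$ and the $\bz$-lattice produced by the split Hodge pieces is, by \cite{Flach-Morin-18}[Remark 5.2], exactly $C(\X,m)$; this yields the factor $C(\X,n)/C(\X,d-n)$ and is where the factorials of (\ref{explicitcorrectingfactor}) enter. Second, and more delicately, the split integral de Rham lattice is not self-dual under the Poincar\'e--Serre pairing $\Omega^i_{\X/\bz}\leftrightarrow\Omega^{d-1-i}_{\X/\bz}\otimes\omega_{\X/\bz}$ because $\omega_{\X/\bz}$ and the different of $\X/\bz$ are nontrivial; the total defect, collected over the Hodge grading with the appropriate multiplicities, is measured by the Bloch conductor $A(\X)$ of Definition \ref{Bloch}, entering $x_\infty(\mathcal X,n)^2$ with exponent $n-d/2$ because the duality pairs the half of de Rham cohomology attached to $n$ against the half attached to $d-n$. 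The surviving power of $(2\pi)$ from the previous paragraph is exactly the one carried by the Betti--de Rham periods implicit in $\xi_\infty$ and in the $\Gamma_\bc$-factors, so after reconciling signs one obtains the stated identity.

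The main obstacle, I expect, is this last passage: proving that the failure of Poincar\'e--Serre self-duality of the integral derived de Rham complex is governed by $A(\X)$ with the exponent $n-d/2$, rather than by an extra power or by an unaccounted-for local factor at the bad primes, and at the same time that the factorials coming from the special values of $\Gamma_\bc$ at the integers $n-i$ and $d-n-i$ recombine exactly with the factorials in $C(\X,n)/C(\X,d-n)$ given by (\ref{explicitcorrectingfactor}). Both demand a careful local analysis at each prime $p$ of the torsion in $L\Omega_{\X/\bz}^{<m}\otimes\bz_p$ and of the different of $\X/\bz$, together with the global Hodge-theoretic bookkeeping of periods and $\Gamma$-values; the sign, reflected in the $\pm$ of the statement, I would not try to pin down.
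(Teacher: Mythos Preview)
Your overall strategy---compute $x_\infty(\mathcal X,n)^2$ by tracking integral bases through $\xi_\infty$ and match against the three factors on the right---is the same as the paper's, but two of your key steps are misidentified, and one essential ingredient is missing entirely.

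\textbf{The missing power of $2$.} You treat the Betti side as if $R\Gamma_W(\mathcal X_\infty,\bz(n))$ were simply $R\Gamma(\mathcal X(\bc),\bz(n))^+$. Rationally this is true, but integrally it is not: $R\Gamma_W(\mathcal X_\infty,\bz(n))=R\Gamma(\mathcal X_\infty,i_\infty^*\bz(n))$ for a specific complex $i_\infty^*\bz(n)$ on the quotient space $\mathcal X_\infty=\mathcal X(\bc)/G_\br$, and the comparison of $\det_\bz R\Gamma_W(\mathcal X_\infty,\bz(n))\otimes\det_\bz^{-1}R\Gamma_W(\mathcal X_\infty,\bz(d-n))$ with $\det_\bz R\Gamma(\mathcal X(\bc),\bz(n))$ produces exactly the factor $2^{d_-(\mathcal X,n)-d_+(\mathcal X,n)}$. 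This requires the Verdier duality analysis on $\mathcal X_\infty$ carried out in Section~\ref{verdier} (Proposition~\ref{final-bettiprop}); it does not fall out of a Hodge-component-by-Hodge-component period computation. Without it, your count of powers of $2$ cannot close, since the archimedean Euler factor ratio contributes precisely $2^{d_+(\mathcal X,n)-d_-(\mathcal X,n)}$ (Corollary~\ref{prcor}).

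\textbf{The origin of $C(\mathcal X,n)$.} Your claim that $C(\mathcal X,m)$ measures the discrepancy between $\det_\bz R\Gamma(\mathcal X_{Zar},L\Omega_{\mathcal X/\bz}^{<m})$ and ``the $\bz$-lattice produced by the split Hodge pieces'' is not what \cite{Flach-Morin-18}[Remark~5.2] says (there the comparison is with a modified complex $\tilde L\Omega^{<m}$, not with any splitting), and it plays no role in the paper's proof. In the actual argument no integral-versus-rational de~Rham discrepancy of this type is computed. Instead, one shows directly (Lemma~\ref{clemma}) that the explicit formula~(\ref{explicitcorrectingfactor}) gives
\[
\frac{C(\mathcal X,n)}{C(\mathcal X,d-n)}=\pm\Bigl(\prod_{p,q}\Gamma^*(n-p)^{h^{p,q}(-1)^{p+q}}\Bigr)^{-1},
\]
which exactly cancels the $\Gamma^*$-product in $\zeta^*(\mathcal X_\infty,n)/\zeta^*(\mathcal X_\infty,d-n)$ (Corollary~\ref{prcor}). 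After this cancellation the target identity reduces to $x_\infty(\mathcal X,n)^2=\pm A(\mathcal X)^{n-d/2}\cdot 2^{d_+-d_-}\cdot(2\pi)^{d_-+t_H}$, and only then does one compute $x_\infty$.

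\textbf{The mechanism for $A(\mathcal X)$.} The exponent $n-d/2$ does not come in one piece from a self-duality defect. It is the difference of two contributions: the de~Rham duality comparison (Corollary~\ref{dRfactor}) gives $A(\mathcal X)^{d-n}$, while the Betti--de~Rham period $P$ satisfies $P^2=\pm(2\pi i)^{2t_H}A(\mathcal X)^d$, obtained by comparing the discriminants of Poincar\'e duality on $R\Gamma(\mathcal X(\bc),\bz(n))$ (which is $\pm1$) and on $R\Gamma_{dR}(\mathcal X/\bz)/F^d$ (which is $\pm A(\mathcal X)^d$ by Proposition~\ref{drduality}). Combining $A(\mathcal X)^{d-n}\cdot A(\mathcal X)^{-d/2}$ yields $A(\mathcal X)^{d/2-n}$ in $x_\infty^{-2}$. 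Your proposal conflates these two steps into one ``Poincar\'e--Serre'' defect, which would not produce the correct exponent.
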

We now explain the significance of this result. Let $\zeta(\overline{\mathcal{X}},s):=\zeta(\mathcal{X},s)\cdot \zeta(\mathcal{X}_\infty,s)$
be the completed Zeta-function of $\X$.
\begin{conj}\label{conj-funct-equation-intro} (Functional Equation) Let $\X$ be a regular scheme of dimension $d$, proper and flat over $\Spec(\bz)$. Then $\zeta(\X,s)$ has a meromorphic continuation to all $s\in\bc$ and 
$$A(\mathcal{X})^{(d-s)/2}\cdot \zeta(\overline{\mathcal{X}},d-s)=\pm A(\mathcal{X})^{s/2} \cdot \zeta(\overline{\mathcal{X}},s).$$
\end{conj}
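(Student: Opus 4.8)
The plan is to reduce the asserted functional equation for $\zeta(\overline{\X},s)$ to the standard conjectural analytic properties of the motivic $L$-functions attached to the cohomology of the generic fibre $\X_\bq$, together with Bloch's conductor formula. The first step is the decomposition of the Hasse--Weil zeta function into an alternating product of motivic $L$-functions. At primes of good reduction the Grothendieck--Lefschetz trace formula gives $\zeta(\X_{\bF_p},s)=\prod_{i}\det\!\big(1-\mathrm{Frob}_p\,p^{-s}\,\big|\,H^i_{\mathrm{et}}(\X_{\bar\bq},\bq_\ell)\big)^{(-1)^{i+1}}$, and granting the expected description of the bad Euler factors of a regular model in terms of inertia invariants one obtains, for all $\X$ regular, proper and flat over $\bz$,
\[
\zeta(\X,s)=\prod_{i=0}^{2(d-1)} L\big(h^i(\X_\bq),s\big)^{(-1)^{i}},
\]
where $h^i(\X_\bq)$ is the weight-$i$ piece of the motive of $\X_\bq$. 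By the construction recalled in Section \ref{archimedean} the archimedean factor is $\zeta(\X_\infty,s)=\prod_i L_\infty(h^i(\X_\bq),s)^{(-1)^i}$ with Serre's $\Gamma$-factors, so that, writing $\Lambda(h^i,s):=\mathfrak f(h^i)^{s/2}L_\infty(h^i,s)L(h^i,s)$ for the completed $L$-function, one has $\zeta(\overline{\X},s)=\prod_i\big(\mathfrak f(h^i)^{-s/2}\,\Lambda(h^i,s)\big)^{(-1)^i}$.

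The second step invokes, for each $i$, the conjectural meromorphic continuation of $L(h^i(\X_\bq),s)$ and the functional equation $\Lambda(h^i(\X_\bq),s)=\varepsilon(h^i)\cdot\Lambda\big(h^i(\X_\bq)^\vee,1-s\big)$, and combines it with Poincar\'e duality on the smooth proper variety $\X_\bq$ in the form $h^i(\X_\bq)^\vee\cong h^{2(d-1)-i}(\X_\bq)(d-1)$. Since twisting by $\bq(d-1)$ shifts the $L$-variable by $d-1$, the right-hand side becomes $\varepsilon(h^i)\cdot\Lambda\big(h^{2(d-1)-i}(\X_\bq),d-s\big)$, and, because $2(d-1)-i\equiv i\pmod 2$, reindexing the product over $i$ and multiplying the functional equations together yields
\[
\zeta(\overline{\X},s)=\pm\Big(\Prod_i \mathfrak f(h^i)^{(-1)^{i}}\Big)^{(d-2s)/2}\cdot\zeta(\overline{\X},d-s),
\]
the product of the $\varepsilon$-factors contributing only the sign. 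It then remains to identify $\prod_i\mathfrak f(h^i(\X_\bq))^{(-1)^i}$ with the Bloch conductor $A(\X)$ of Definition \ref{Bloch}: this is precisely the global form of Bloch's conductor conjecture --- the equality of the Artin conductor of the virtual Galois representation $\sum_i(-1)^iH^i(\X_{\bar\bq},\bq_\ell)$ with a self-intersection number attached to the model $\X$ --- which is known unconditionally in relative dimension $\le 1$ by Bloch and in further cases by T.\ Saito. Substituting it and rearranging gives $A(\X)^{(d-s)/2}\zeta(\overline{\X},d-s)=\pm A(\X)^{s/2}\zeta(\overline{\X},s)$, and the meromorphic continuation of $\zeta(\X,s)$ follows from that of the individual $L(h^i,s)$.

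The main obstacle is that almost every analytic input above --- the meromorphic continuation and functional equation of the motivic $L$-functions $L(h^i(\X_\bq),s)$ --- is itself open in general, being a consequence of the conjectural automorphy of motives; what the argument really produces is thus a reduction of the functional equation of $\zeta(\overline{\X},s)$ to the standard conjectures on motivic $L$-functions plus Bloch's conductor formula, rather than an unconditional theorem. It does become unconditional when $\X$ is smooth and proper over a finite field (there $\zeta(\X,s)$ is a rational function of $p^{-s}$ and the functional equation is Poincar\'e--Grothendieck duality), when $\X=\Spec(\co_F)$ or a close relative (where $\zeta(\X,s)$ is built from Dedekind and Hecke $L$-functions, the conductor being $|D_F|$, and the statement is classical), and in low-dimensional or modular situations where the motives $h^i(\X_\bq)$ are known to be automorphic. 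A subsidiary point, even in those cases, is to check that the archimedean factors of Section \ref{archimedean} and the chosen normalisation of the conductor combine so that the Tate twist in Poincar\'e duality reproduces exactly the reflection $s\mapsto d-s$ and the factor $A(\X)^{s/2}$, with no stray power of $2$ or of $A(\X)$ left over.
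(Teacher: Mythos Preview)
The statement you were asked to prove is not a theorem in the paper but a \emph{conjecture}: the paper offers no proof of it and instead uses it as a standing hypothesis in Theorem~\ref{thm:fe}. There is therefore nothing to compare your argument against in the sense of ``the paper's own proof.'' What the paper does provide, after restating the conjecture in Section~\ref{sec:main}, is a short discussion of the cases in which it is known---$d=1$ (Dedekind zeta), $d=2$ via Bloch's identification of Artin and Bloch conductor together with modularity of the relevant $L(h^1,s)$, and some higher-dimensional cases accessible through potential automorphy---and this discussion is entirely in line with your sketch.

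Your write-up is the standard heuristic derivation of the functional equation from (i) the conjectural factorisation of $\zeta(\X,s)$ into motivic $L$-functions at \emph{all} primes, (ii) the conjectural analytic continuation and functional equation of each $L(h^i(\X_\bq),s)$, and (iii) Bloch's conductor formula identifying $\prod_i \mathfrak f(h^i)^{(-1)^i}$ with $A(\X)$. You correctly flag that each of these inputs is itself open in general, so that what you have written is a reduction rather than a proof. That is an honest and accurate assessment; just be aware that even step (i)---matching the local zeta factor of the model $\X_{\bF_p}$ at a bad prime with the inertia-invariant Euler factor of the generic fibre---is not a formality for a general regular model and is part of the conjectural package. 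In short: your outline is the right motivation for the conjecture and matches the paper's own remarks on known cases, but neither you nor the paper claims an unconditional proof.
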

Assume that $\mathbf{L}(\overline{\mathcal{X}}_{et},n)$, $\mathbf{L}(\overline{\mathcal{X}}_{et},d-n)$, $\mathbf{AV}(\overline{\mathcal{X}}_{et},n)$, $\mathbf{B}(\mathcal{X},n)$ and $\mathbf{B}(\mathcal{X},d-n)$ hold, so that Conjecture \ref{conjmain} for $(\X,n)$ and $(\X,d-n)$ makes sense. By  \cite{Flach-Morin-16}[Prop. 5.29], the exact triangle (\ref{Wtri}) and Weil-\'etale duality \cite{Flach-Morin-16}[Thm. 3.22] induce  a canonical isomorphism
$$\Delta(\mathcal{X}/\mathbb{Z},n)\otimes \Xi_{\infty}(\mathcal{X}/\mathbb{Z},n)\stackrel{\sim}{\longrightarrow}\Delta(\mathcal{X}/\mathbb{Z},d-n)$$
compatible with $\xi_{\infty}$ and the trivializations (\ref{introtriv}) of $\Delta(\X/\bz,n)$ and $\Delta(\X/\bz,d-n)$. We obtain 
\begin{thm}\label{thm:fe}
Assume that $\zeta(\overline{\X},s)$ satisfies Conjecture \ref{conj-funct-equation-intro}.
Then Conjecture \ref{conjmain} for $(\X,n)$ is equivalent to Conjecture \ref{conjmain} for $(\X,d-n)$.
\end{thm}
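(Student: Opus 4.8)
The plan is to deduce Theorem \ref{thm:fe} as a formal consequence of three facts already at hand: Theorem \ref{thmintro}, the compatibility of the canonical isomorphism $\Delta(\mathcal{X}/\mathbb{Z},n)\otimes\Xi_{\infty}(\mathcal{X}/\mathbb{Z},n)\stackrel{\sim}{\longrightarrow}\Delta(\mathcal{X}/\mathbb{Z},d-n)$ with the trivializations $\lambda_\infty$ and $\xi_\infty$ recalled just above, and the assumed functional equation (Conjecture \ref{conj-funct-equation-intro}). First I would put Conjecture \ref{conjmain} in numerical form: for $m\in\{n,d-n\}$ the isomorphism $\lambda_\infty\colon\mathbb{R}\xrightarrow{\sim}\Delta(\mathcal{X}/\mathbb{Z},m)\otimes\mathbb{R}$ determines a well-defined class $\delta(\mathcal{X},m)\in\mathbb{R}^{\times}/\{\pm1\}$ by the requirement $\lambda_\infty^{-1}(\Delta(\mathcal{X}/\mathbb{Z},m))=\delta(\mathcal{X},m)\cdot\mathbb{Z}$, and Conjecture \ref{conjmain} for $(\mathcal{X},m)$ is precisely the scalar equality $\delta(\mathcal{X},m)=\zeta^{*}(\mathcal{X},m)^{-1}C(\mathcal{X},m)$ in $\mathbb{R}^{\times}/\{\pm1\}$. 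Reading the definition of $x_\infty(\mathcal{X},n)^{2}$ the same way, $\xi_\infty$ satisfies $\xi_\infty^{-1}(\Xi_{\infty}(\mathcal{X}/\mathbb{Z},n))=x_\infty(\mathcal{X},n)^{-2}\cdot\mathbb{Z}$.

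Next I would exploit the compatibility. Since the canonical isomorphism is an isomorphism of invertible $\mathbb{Z}$-modules and intertwines the product trivialization $\lambda_\infty\otimes\xi_\infty$ of $\Delta(\mathcal{X}/\mathbb{Z},n)\otimes\Xi_{\infty}(\mathcal{X}/\mathbb{Z},n)$ with the trivialization $\lambda_\infty$ of $\Delta(\mathcal{X}/\mathbb{Z},d-n)$ under the multiplication map $\mathbb{R}\otimes_{\mathbb{R}}\mathbb{R}\to\mathbb{R}$, taking preimages of the integral lattices yields
\[
\delta(\mathcal{X},d-n)=\delta(\mathcal{X},n)\cdot x_\infty(\mathcal{X},n)^{-2}\qquad\text{in }\mathbb{R}^{\times}/\{\pm1\}.
\]
Inserting the formula of Theorem \ref{thmintro} for $x_\infty(\mathcal{X},n)^{2}$ turns this into
\[
\delta(\mathcal{X},d-n)=\delta(\mathcal{X},n)\cdot A(\mathcal{X})^{d/2-n}\cdot\frac{\zeta^{*}(\mathcal{X}_\infty,d-n)}{\zeta^{*}(\mathcal{X}_\infty,n)}\cdot\frac{C(\mathcal{X},d-n)}{C(\mathcal{X},n)}.
\]
Substituting $\delta(\mathcal{X},n)=\zeta^{*}(\mathcal{X},n)^{-1}C(\mathcal{X},n)$ and comparing with the desired equality $\delta(\mathcal{X},d-n)=\zeta^{*}(\mathcal{X},d-n)^{-1}C(\mathcal{X},d-n)$, the factors $C(\mathcal{X},n)$ and $C(\mathcal{X},d-n)$ cancel and one is reduced to the single scalar identity
\[
A(\mathcal{X})^{d/2-n}\cdot\zeta^{*}(\overline{\mathcal{X}},d-n)=\pm\,\zeta^{*}(\overline{\mathcal{X}},n),
\]
where $\zeta^{*}(\overline{\mathcal{X}},m)=\zeta^{*}(\mathcal{X},m)\cdot\zeta^{*}(\mathcal{X}_\infty,m)$ is the leading Laurent coefficient of $\zeta(\overline{\mathcal{X}},s)$ at $s=m$. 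Because this computation is an equivalence and is symmetric in $n\leftrightarrow d-n$, it shows that Conjecture \ref{conjmain} for $(\mathcal{X},n)$ holds if and only if Conjecture \ref{conjmain} for $(\mathcal{X},d-n)$ holds, provided the last displayed identity is known.

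Finally I would obtain that identity from Conjecture \ref{conj-funct-equation-intro}. Specializing $A(\mathcal{X})^{(d-s)/2}\zeta(\overline{\mathcal{X}},d-s)=\pm A(\mathcal{X})^{s/2}\zeta(\overline{\mathcal{X}},s)$ near $s=n$ forces $\zeta(\overline{\mathcal{X}},s)$ to have the same order of vanishing (or pole) at $s=n$ and at $s=d-n$ and forces the leading Laurent coefficients to satisfy $A(\mathcal{X})^{(d-n)/2}\zeta^{*}(\overline{\mathcal{X}},d-n)=\pm A(\mathcal{X})^{n/2}\zeta^{*}(\overline{\mathcal{X}},n)$, which is exactly the needed identity after dividing by $A(\mathcal{X})^{n/2}$ (the sign $(-1)^{\mathrm{ord}}$ produced by the specialization being absorbed into $\pm$).

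I do not expect any single deep obstacle here, since all the substantive analytic and homological content is already contained in Theorem \ref{thmintro}; the delicate part will be the bookkeeping. Precisely, one must pin down the class $\delta(\mathcal{X},m)\in\mathbb{R}^{\times}/\{\pm1\}$ rigorously, verify that the compatibility of $\Delta(\mathcal{X}/\mathbb{Z},n)\otimes\Xi_{\infty}(\mathcal{X}/\mathbb{Z},n)\stackrel{\sim}{\longrightarrow}\Delta(\mathcal{X}/\mathbb{Z},d-n)$ with the trivializations does translate, through the determinant formalism, into the displayed multiplicative relation among $\delta(\mathcal{X},n)$, $\delta(\mathcal{X},d-n)$ and $x_\infty(\mathcal{X},n)^{2}$, and check that the archimedean Euler factor $\zeta(\mathcal{X}_\infty,s)$ — whose Gamma factors may contribute poles at $s=n$ or $s=d-n$ — is handled consistently when passing from the functional equation of the function to the relation between leading Laurent coefficients.
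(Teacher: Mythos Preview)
Your proposal is correct and follows essentially the same route as the paper's proof: both reduce Theorem~\ref{thm:fe} to Theorem~\ref{thmintro} via the compatibility of the isomorphism $\Delta(\mathcal{X}/\mathbb{Z},n)\otimes\Xi_{\infty}(\mathcal{X}/\mathbb{Z},n)\xrightarrow{\sim}\Delta(\mathcal{X}/\mathbb{Z},d-n)$ with the trivializations $\lambda_\infty$ and $\xi_\infty$, and then invoke the functional equation to match the resulting scalar identity. The only difference is cosmetic: the paper cites \cite{Flach-Morin-16}[Cor.~5.31] for the scalar reduction (together with the symmetry $x_\infty(\mathcal{X},n)^2\cdot x_\infty(\mathcal{X},d-n)^2=1$), whereas you write out the equivalent computation with $\delta(\mathcal{X},m)$ explicitly.
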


\subsection{Outline of this article} In section \ref{verdier} we study Verdier duality on the locally compact space $\X_\infty:=\X(\bc)/G_\br$ and how it applies to the complexes of sheaves $i^*_\infty\bz(n)$ introduced in \cite{Flach-Morin-16}[Def. 3.23]. The key result in terms of relevance for the following sections is Prop. \ref{final-bettiprop} which provides the correct power of $2$ appearing in the functional equation.

 In section \ref{bloch} we review duality results for the exterior powers of the cotangent complex $L_{\X/\bz}$ due to T. Saito \cite{tsaito04} and deduce duality for derived de Rham cohomology of $\X$. It turns out that the Bloch conductor $A(\X)$ of $\X$ introduced in \cite{bloch87a} measures the failure of a perfect duality for these theories, see Thm. \ref{lichtenbaum} and Prop. \ref{drduality}. Corollary \ref{dRfactor} then provides the correct power of $A(\X)$ appearing in the functional equation. 
 
 In section \ref{archimedean} we recall the archimedean Euler factors for $\zeta(\X,s)$ and make some preliminary computations towards the main result. 

Finally, in section \ref{sec:main} we prove Thm. \ref{thmintro} and Thm. \ref{thm:fe}, also employing the results already established in \cite{Flach-Morin-16}[Cor. 5.31] towards compatibility with the functional equation. 

\bigskip

{\em Acknowledgements.} We would like to thank S. Lichtenbaum for many indirect contributions to this project. Our realization that his Conjecture 0.1 in \cite{li18} could be proven using the ideas of T. Saito in \cite{tsaito88} was at the origin of this article (but in fact such a proof had already been carried out by T. Saito himself in \cite{tsaito04}[Cor. 4.9], see Thm. \ref{lichtenbaum} below).  Lichtenbaum's recent preprint \cite{li20} has considerable overlap with our article in that he also formulates a conjecture on special values of $\zeta(\X,s)$ and proves compatibility with the functional equations. Despite differences in language, and the fact that all results of \cite{li20} are only up to powers of $2$, we believe our approaches are largely equivalent. 

We would also like to thank Spencer Bloch for interesting discussions related to $C(\X,n)$.

\section{Verdier duality on $\X_\infty=\X(\bc)/G_\br$}\label{verdier}

\subsection{Statement of the duality theorem}
Let $\X$ be a regular, flat and proper scheme over  $\s(\bz)$. Assume that $\X$ is connected of dimension $d$. We denote by $\X_{\infty}:=\X(\bc)/G_{\br}$ the quotient topological space, where $\X(\bc)$ is endowed with the complex topology. Let
$$p:\X(\bc)\rightarrow \X_{\infty}$$
be the quotient map and let
$$\pi:\mathrm{Sh}(G_{\br},\X(\bc))\rightarrow \mathrm{Sh}(\X_{\infty})$$
be the canonical morphism of topoi, where $\mathrm{Sh}(G_{\br},\X(\bc))$ is the category of $G_{\br}$-equivariant sheaves on $\X(\bc)$. We have the formula
$$\pi_*(\mathcal{F})\simeq (p_*\mathcal{F})^{G_{\br}}.$$
Let $\bz(n):=(2i\pi)^n\cdot\bz$ be the abelian sheaf on $\mathrm{Sh}(G_{\br},\X(\bc))$ defined by the obvious $G_{\br}$-action on $(2i\pi)^n\cdot\bz$. In \cite{Flach-Morin-16}[Def. 3.23], we defined the complex of sheaves on $\X_{\infty}$
$$i^*_{\infty}\bz(n):=\mathrm{Fib}(R\pi_*\bz(n)\rightarrow \tau^{>n}R\widehat{\pi}_*\bz(n))$$
for any $n\in\bz$. We define similarly
$$Ri^!_{\infty}\bz(n+1)[3]:=\bz'(n):=\mathrm{Fib}(R\pi_*\bz(n)\rightarrow \tau^{\geq n}R\widehat{\pi}_*\bz(n))$$
and we set 
$$e:=d-1.$$
If $Z$ is a locally compact topological space, we denote by $\mathcal{D}_Z:=Rf^!\bz$ the dualizing complex, where $f:Z\rightarrow\{*\}$ is the map from $Z$ to the point.

\begin{thm}\label{dualitythm}
There is an equivalence $\bz'(e)\stackrel{\sim}{\rightarrow} \D_{\X_{\infty}}[-2e]$ and a perfect pairing
$$i^*_{\infty}\bz (n) \otimes^L \bz'(e-n) \longrightarrow \bz'(e)\stackrel{\sim}{\longrightarrow} \D_{\X_{\infty}}[-2e]$$
in the derived category of abelian sheaves over $\X_{\infty}$, for any $n\in\bz$.
\end{thm}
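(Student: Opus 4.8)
The plan is to deduce the duality on $\X_\infty$ from Verdier duality on the locally compact space $\X(\bc)$, carried down along the finite quotient map $p\colon \X(\bc)\to\X_\infty$, and then to identify the $G_\br$-(co)invariants correctly. First I would set up the dualizing complex: since $\X(\bc)$ is a compact topological manifold of real dimension $2e+2=2d$ (as $\X$ is regular, flat and proper of dimension $d$ over $\bz$, its generic fibre $\X_\bq$ is smooth of dimension $d-1$ over $\bq$, hence $\X(\bc)$ is a complex manifold of complex dimension $e=d-1$), one has $\D_{\X(\bc)}\simeq \bz^{or}[2e]$ for the orientation sheaf, and since $\X(\bc)$ is a complex manifold it is canonically oriented, so $\D_{\X(\bc)}\simeq \bz[2e]$. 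The first step is then to establish the equivalence $\bz'(e)\xrightarrow{\sim}\D_{\X_\infty}[-2e]$. For this I would compute $Ri^!_\infty\bz(e+1)[3]$ locally: away from the fixed locus $\X(\br)=\X(\bc)^{G_\br}$, the map $p$ is a double cover and $\bz'(n)$ computes ordinary cohomology with the twisted coefficients $\bz(n)$, matching $\D_{\X_\infty}[-2e]$ since $\X_\infty$ is a manifold there; along $\X(\br)$ one uses that $\D_{\X_\infty}$ is the pushforward-with-compact-support dualizing complex and compares it with the defining fibre sequence of $\bz'(n)=\mathrm{Fib}(R\pi_*\bz(n)\to\tau^{\geq n}R\widehat\pi_*\bz(n))$, where the truncation precisely cuts off the Tate-cohomological tail so that what remains is the local cohomology giving the dualizing complex. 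This is essentially the computation of \cite{Flach-Morin-16}[Def. 3.23] combined with the standard description of the dualizing complex of a quotient of a manifold by a finite group action with fixed points.

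The second step is the perfect pairing. The natural construction is: on $\X(\bc)$ we have the cup product
\[
\bz(n)\otimes^L\bz(e-n)\longrightarrow \bz(e)\xrightarrow{\;\sim\;}\D_{\X(\bc)}[-2e],
\]
which is a perfect pairing by Verdier (Poincar\'e) duality on the oriented manifold $\X(\bc)$. Applying $R\pi_*$ and using the projection-type compatibility $R\pi_*(\mathcal F\otimes^L p^*\mathcal G)$, together with the fact that $i^*_\infty\bz(n)$ and $\bz'(e-n)$ are defined as fibres of maps involving $R\pi_*$ and $R\widehat\pi_*$, I would produce the pairing $i^*_\infty\bz(n)\otimes^L\bz'(e-n)\to\bz'(e)$ by checking that the product respects the two different truncation conditions ($\tau^{>n}$ on the first factor, $\tau^{\geq e-n}$ on the second): the point is that the "defect" sheaves $\tau^{>n}R\widehat\pi_*\bz(n)$ and $\tau^{\geq e-n}R\widehat\pi_*\bz(e-n)$ multiply into something killed by the truncation used to define $\bz'(e)$, because Tate cohomology of $G_\br$ is $2$-torsion and the relevant bidegrees do not overlap — this is where the asymmetry between $\tau^{>n}$ and $\tau^{\geq\cdot}$ in the definitions of $i^*_\infty$ and $Ri^!_\infty$ is exactly engineered to make the pairing land correctly. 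Perfectness is then checked stalkwise: away from $\X(\br)$ it is Poincar\'e duality on the $2{:}1$ cover read $G_\br$-equivariantly; on $\X(\br)$ one reduces to a local model $\br^e$ with the sign action, where the computation is an explicit (finite) verification that the truncated $G_\br$-equivariant cohomology pairs perfectly with the co-truncated one into the local dualizing complex.

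I expect the main obstacle to be the stalk computation along the real points $\X(\br)$, i.e.\ controlling the interplay between the Hodge-type truncations $\tau^{>n}$, $\tau^{\geq n}$ and the $G_\br$-Tate-cohomology tails that $R\widehat\pi_*$ records, and verifying that the cup product of these truncated complexes is perfect rather than merely a map. A clean way to organize this is to work in a small neighbourhood that is $G_\br$-equivariantly homotopy equivalent to a linear sign-representation $(\br^e,-1)$ (possible since $\X(\bc)$ near a real point looks like $\bc^e$ with complex conjugation, by regularity of $\X$), reduce the whole statement to that universal local model, and there check duality by hand; the global statement then follows by gluing since both sides are already known to be the correct dualizing objects off $\X(\br)$. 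A secondary technical point, but one already flagged as the payoff in Prop.\ \ref{final-bettiprop}, is bookkeeping the exact power of $2$: one must verify the pairing is perfect \emph{on the nose} (an isomorphism of complexes of sheaves), not just up to $2$-torsion, which again comes down to the local model and is the reason the definitions were set up with the precise truncation degrees they use.
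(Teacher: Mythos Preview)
Your overall strategy---construct the product map, then verify it induces an equivalence by checking separately on the open complement $j^*$ (via the free double cover and Poincar\'e duality on $\X(\bc)^\circ$) and along the real locus $\X(\br)$ via an explicit local model---is exactly the paper's approach. Two organizational points where the paper is more economical than your plan, and which you may want to adopt:

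\textbf{(1) Order of the two conclusions.} The paper does \emph{not} first establish $\bz'(e)\simeq\D_{\X_\infty}[-2e]$ and then the pairing; it does the reverse. The product is constructed directly into $\D_{\X_\infty}[-2e]$ by factoring
\[
i^*_\infty\bz(n)\otimes^L\bz'(e-n)\longrightarrow R\pi_*\bz(n)\otimes^L R\pi_*\bz(e-n)\longrightarrow R\pi_*\bz(e)\longrightarrow \D_{\X_\infty}[-2e],
\]
the last arrow coming from the trace $R\Gamma(\X(\bc),(2\pi i)^e\bz)\to\bz[-2e]$ on the oriented manifold $\X(\bc)$. No truncation bookkeeping on the \emph{target} is needed. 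Once the induced map $i^*_\infty\bz(n)\to R\underline{\Hom}(\bz'(e-n),\D_{\X_\infty}[-2e])$ is shown to be an equivalence for all $n$, the identification $\bz'(e)\simeq\D_{\X_\infty}[-2e]$ drops out as the special case $n=0$, since $i^*_\infty\bz(0)$ is the constant sheaf $\bz$. Your proposed first step is therefore redundant.

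\textbf{(2) The check along $\X(\br)$.} Rather than verifying perfectness of the cup product ``by hand'' on a local model, the paper computes each side of the map separately after applying $R\iota^!$ and taking stalks. For the right-hand side it uses the adjunction $R\iota^!R\underline{\Hom}(-,\D_{\X_\infty})\simeq R\underline{\Hom}(\iota^*(-),\D_{\X(\br)})$, which reduces everything to explicit truncations of $G_\br$-group (co)homology; for the left-hand side it computes $\iota_x^*R\iota^!\,i^*_\infty\bz(n)$ via the equivariant sphere $\mathbf S^{e-1}$ you anticipate. The match is then a purely algebraic duality between truncated (co)homology complexes of $G_\br$ (the content of Lemmas \ref{lem-simplify}, \ref{lem-simplify2}, \ref{dualitybasic}). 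This is cleaner than tracking the cup product through the truncations.
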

\begin{proof}
We set $\bz(n):=i^*_{\infty}\bz(n)$, we denote by $\iota:\X(\br)\rightarrow\X_{\infty}$ the closed immersion and by $j$ the complementary open immersion.
By Proposition \ref{productmap} there is a product map
$$\bz(n)\otimes^L \bz'(e-n)\rightarrow \mathcal{D}_{\X_{\infty}}[-2e]$$
inducing
\begin{equation}\label{dualitymap0}
\bz(n)\rightarrow R\underline{\mathrm{Hom}}(\bz'(e-n),\mathcal{D}_{\X_{\infty}}[-2e]).
\end{equation}
Then (\ref{dualitymap0}) induces an equivalence
$$j^*\bz(n)\stackrel{\sim}{\rightarrow} j^* R\underline{\mathrm{Hom}}(\bz'(e-n),\mathcal{D}_{\X_{\infty}}[-2e])$$
by Proposition \ref{j}. Similarly, (\ref{dualitymap0}) induces an equivalence
$$R\iota^!\bz(n)\stackrel{\sim}{\rightarrow} R\iota^! R\underline{\mathrm{Hom}}(\bz'(e-n),\mathcal{D}_{\X_{\infty}}[-2e])$$
by Proposition \ref{iota}. It follows that (\ref{dualitymap0}) 
is an equivalence. Applying $R\underline{\mathrm{Hom}}(-,\mathcal{D}_{\X_{\infty}}[-2e])$, we get an equivalence
$$\bz'(e-n)\stackrel{\sim}{\rightarrow} R\underline{\mathrm{Hom}}(\bz(n),\mathcal{D}_{\X_{\infty}}[-2e]).$$
Since $\bz(0)$ is the constant sheaf $\bz$, we have 
$$\bz'(e)\stackrel{\sim}{\longrightarrow} \D_{\X_{\infty}}[-2e].$$
\end{proof}
We immediately obtain
\begin{cor}\label{dualitycor}
There is a trace map $R\Gamma(\X_{\infty},\bz'(e))\rightarrow \bz[-2e]$
and a perfect pairing
$$R\Gamma(\X_{\infty},i^*_{\infty}\bz(n))\otimes^L R\Gamma(\X_{\infty},\bz'(e-n))\rightarrow R\Gamma(\X_{\infty},\bz'(e))\rightarrow \bz[-2e]$$
of perfect complexes of abelian groups, for any $n\in\bz$.
\end{cor}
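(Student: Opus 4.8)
The plan is to obtain the corollary from Theorem \ref{dualitythm} by applying $R\Gamma(\X_\infty,-)$ and invoking Verdier duality for the structure map $f\colon \X_\infty\to\{*\}$. The one geometric input needed is that $\X_\infty=\X(\bc)/G_\br$ is compact: since $\X$ is proper over $\s(\bz)$ the analytic space $\X(\bc)$ is compact, and the quotient by the finite group $G_\br$ is again compact (of finite covering dimension $2e$). Hence $f$ is proper, $Rf_!=Rf_*$, and $R\Gamma_c(\X_\infty,-)=R\Gamma(\X_\infty,-)$.

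The trace map is produced as follows. The counit of the adjunction $(Rf_!,Rf^!)$ at the constant sheaf $\bz$ on the point is a canonical morphism $R\Gamma(\X_\infty,\D_{\X_\infty})=Rf_!Rf^!\bz\to\bz$; composing with the $2e$-fold shift of the equivalence $\bz'(e)\xrightarrow{\sim}\D_{\X_\infty}[-2e]$ of Theorem \ref{dualitythm} gives the trace $R\Gamma(\X_\infty,\bz'(e))\to\bz[-2e]$. Cup product, followed by $R\Gamma$ of the sheaf-level pairing of Theorem \ref{dualitythm} and then the trace, defines the asserted pairing
\[
R\Gamma(\X_\infty,i^*_\infty\bz(n))\otimes^L R\Gamma(\X_\infty,\bz'(e-n))\to R\Gamma(\X_\infty,\bz'(e))\to\bz[-2e].
\]

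To see this pairing is perfect I would rewrite its adjoint. Because the sheaf pairing $i^*_\infty\bz(n)\otimes^L\bz'(e-n)\to\bz'(e)$ is perfect, its adjoint
\[
i^*_\infty\bz(n)\xrightarrow{\ \sim\ }R\underline{\mathrm{Hom}}(\bz'(e-n),\bz'(e))=R\underline{\mathrm{Hom}}(\bz'(e-n),\D_{\X_\infty})[-2e]
\]
is an equivalence; applying $R\Gamma(\X_\infty,-)$ and then the Verdier duality isomorphism
\[
R\Gamma(\X_\infty,R\underline{\mathrm{Hom}}(\bz'(e-n),\D_{\X_\infty}))\cong R\Hom_\bz(R\Gamma(\X_\infty,\bz'(e-n)),\bz)
\]
(valid since $\bz'(e-n)$ is a bounded constructible complex and $\X_\infty$ is compact) yields an equivalence $R\Gamma(\X_\infty,i^*_\infty\bz(n))\xrightarrow{\sim}R\Hom_\bz(R\Gamma(\X_\infty,\bz'(e-n)),\bz[-2e])$. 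The point to check is that this composite agrees with the adjoint of the cup-product-and-trace pairing above: this is the standard compatibility of Verdier duality with cup products, i.e.\ that under the Verdier isomorphism a sheaf morphism $\F\to\D_{\X_\infty}$ corresponds to the functional ``take $R\Gamma$, then apply the trace''. Finally, the complexes $i^*_\infty\bz(n)$, $\bz'(e-n)$, $\bz'(e)$ are bounded constructible with finitely generated stalk cohomology (as in \cite{Flach-Morin-16}), and $\X_\infty$ is compact of finite dimension, so their derived global sections are bounded complexes of abelian groups with finitely generated cohomology, hence perfect over $\bz$; biduality then upgrades the one-sided equivalence to a genuine perfect pairing, symmetric in the two factors.

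The only real obstacle is the compatibility in the previous paragraph — verifying that applying $R\Gamma$ to the perfect sheaf pairing of Theorem \ref{dualitythm} and invoking abstract Verdier duality reproduces precisely the cup-product-and-trace pairing, so that one obtains a perfect pairing of complexes and not merely an abstract isomorphism. Compactness, the existence of the trace, and the constructibility and perfectness statements are all routine.
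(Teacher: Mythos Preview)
Your proposal is correct and is exactly the standard deduction the paper has in mind: the paper gives no proof beyond the phrase ``We immediately obtain'', and what you wrote is precisely how one immediately obtains it from Theorem~\ref{dualitythm}---apply $R\Gamma(\X_\infty,-)=Rf_!$ (compactness), use the counit $Rf_!\D_{\X_\infty}\to\bz$ as trace, and invoke global Verdier duality to pass from the perfect sheaf pairing to a perfect pairing of perfect complexes. The compatibility you flag as the ``only real obstacle'' is indeed the routine compatibility of Verdier duality with cup product, and the perfectness of the global sections is as you say.
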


The following corollaries also follow easily from Theorem \ref{dualitythm}. We state them in order to justify the notation $Ri^!_{\infty}\bz(n)$.

\begin{cor}
There is a trace map
$$R\Gamma(\X_{\infty},Ri^!_{\infty}\bz(d))\rightarrow \bz[-2d-1]$$
and a perfect pairing
$$R\Gamma(\X_{\infty},i^*_{\infty}\bz(n))\otimes^L R\Gamma(\X_{\infty},Ri^!_{\infty}\bz(d-n))\rightarrow R\Gamma(\X_{\infty},Ri^!_{\infty}\bz(d))\rightarrow \bz[-2d-1]$$
of perfect complexes of abelian groups, for any $n\in\bz$.
\end{cor}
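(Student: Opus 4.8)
The plan is to deduce this directly from Corollary \ref{dualitycor} by unwinding the definition of $Ri^!_{\infty}\bz(n)$. Recall that by definition $Ri^!_{\infty}\bz(m+1)[3]=\bz'(m)$, equivalently $Ri^!_{\infty}\bz(m)\simeq \bz'(m-1)[-3]$ for every $m\in\bz$, and that $e=d-1$. Substituting $m=d$ gives $Ri^!_{\infty}\bz(d)\simeq \bz'(d-1)[-3]=\bz'(e)[-3]$, and substituting $m=d-n$ gives $Ri^!_{\infty}\bz(d-n)\simeq \bz'(d-n-1)[-3]=\bz'(e-n)[-3]$. Applying $R\Gamma(\X_{\infty},-)$, which commutes with shifts, we obtain canonical identifications
$$R\Gamma(\X_{\infty},Ri^!_{\infty}\bz(d))\simeq R\Gamma(\X_{\infty},\bz'(e))[-3],\qquad R\Gamma(\X_{\infty},Ri^!_{\infty}\bz(d-n))\simeq R\Gamma(\X_{\infty},\bz'(e-n))[-3].$$

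First I would define the trace map as the composite
$$R\Gamma(\X_{\infty},Ri^!_{\infty}\bz(d))\simeq R\Gamma(\X_{\infty},\bz'(e))[-3]\longrightarrow \bz[-2e][-3]=\bz[-2d-1],$$
using the trace map of Corollary \ref{dualitycor} and the numerical identity $2e+3=2(d-1)+3=2d+1$. Next I would take the perfect pairing
$$R\Gamma(\X_{\infty},i^*_{\infty}\bz(n))\otimes^L R\Gamma(\X_{\infty},\bz'(e-n))\longrightarrow R\Gamma(\X_{\infty},\bz'(e))\longrightarrow \bz[-2e]$$
from Corollary \ref{dualitycor} and shift the second tensor factor by $[-3]$; since $-\otimes^L-$ commutes with shifts in each variable, this yields a pairing
$$R\Gamma(\X_{\infty},i^*_{\infty}\bz(n))\otimes^L R\Gamma(\X_{\infty},Ri^!_{\infty}\bz(d-n))\longrightarrow R\Gamma(\X_{\infty},Ri^!_{\infty}\bz(d))\longrightarrow \bz[-2d-1]$$
which agrees with the one of Corollary \ref{dualitycor} under the identifications above, hence is perfect. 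All three complexes are perfect complexes of abelian groups because $R\Gamma(\X_{\infty},i^*_{\infty}\bz(n))$, $R\Gamma(\X_{\infty},\bz'(e-n))$ and $R\Gamma(\X_{\infty},\bz'(e))$ are, by Corollary \ref{dualitycor}, and shifting preserves perfectness.

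There is essentially no obstacle here: the statement is a formal consequence of Corollary \ref{dualitycor} together with the defining shift $Ri^!_{\infty}\bz(m)[3]\simeq\bz'(m-1)$. The only point requiring care is the bookkeeping with the homological shifts and the substitution $e=d-1$, so that the target of the trace map comes out as $\bz[-2d-1]$ rather than $\bz[-2e]$; the corollary is stated precisely to record this reindexed form and to justify the notation $Ri^!_{\infty}\bz(n)$ as a genuine exceptional inverse image of $\bz(n)$ along $i_\infty$.
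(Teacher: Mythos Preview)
Your proposal is correct and matches the paper's approach: the paper gives no detailed proof, simply noting that this corollary follows easily from Theorem~\ref{dualitythm} (equivalently Corollary~\ref{dualitycor}), and your argument is precisely the formal unwinding of the definition $Ri^!_{\infty}\bz(m)\simeq\bz'(m-1)[-3]$ together with the substitution $e=d-1$ that the paper leaves to the reader.
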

\begin{cor} Assume that $\X$ satisfies the assumptions  $\mathbf{L}(\overline{\mathcal{X}}_{et},n)$, $\mathbf{L}(\overline{\mathcal{X}}_{et},d-n)$ and $\mathbf{AV}(\overline{\mathcal{X}}_{et},n)$ of \cite{Flach-Morin-16}[3.2].  We define 
$$R\Gamma_W(\X,\bz(n)):=R\mathrm{Hom}(R\Gamma_{W,c}(\X,\bz(d-n)),\mathbb{Z}[-2d-1]).$$
Then we have an exact triangle
$$R\Gamma(\X_{\infty},Ri^!_{\infty}\bz(n))\rightarrow R\Gamma_W(\overline{\X},\bz(n))\rightarrow R\Gamma_W(\X,\bz(n)).$$
\end{cor}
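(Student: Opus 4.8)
The plan is to produce the triangle by dualizing the triangle (\ref{Wtri}) for the argument $d-n$. Write $D(-):=R\mathrm{Hom}(-,\bz[-2d-1])$. Under the standing hypotheses $\mathbf{L}(\overline{\X}_{et},n)$, $\mathbf{L}(\overline{\X}_{et},d-n)$ and $\mathbf{AV}(\overline{\X}_{et},n)$, the triangle (\ref{Wtri}) is available in the form
$$R\Gamma_{W,c}(\X,\bz(d-n))\rightarrow R\Gamma_{W}(\overline{\X},\bz(d-n))\rightarrow R\Gamma_{W}(\X_{\infty},\bz(d-n)),$$
and all three terms are perfect complexes of abelian groups. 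Applying the contravariant triangulated functor $D$ to this triangle yields an exact triangle
$$D\big(R\Gamma_{W}(\X_{\infty},\bz(d-n))\big)\rightarrow D\big(R\Gamma_{W}(\overline{\X},\bz(d-n))\big)\rightarrow D\big(R\Gamma_{W,c}(\X,\bz(d-n))\big),$$
whose third term is exactly $R\Gamma_W(\X,\bz(n))$ by the definition of the latter.

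It then remains to identify the first two terms. For the middle one I would apply Weil-\'etale duality \cite{Flach-Morin-16}[Thm. 3.22] to the compactification $\overline{\X}$ to obtain a canonical equivalence $D\big(R\Gamma_{W}(\overline{\X},\bz(d-n))\big)\simeq R\Gamma_W(\overline{\X},\bz(n))$. For the first one I would use that $R\Gamma_W(\X_\infty,\bz(m))=R\Gamma(\X_\infty,i^*_\infty\bz(m))$ by definition, combined with the perfect pairing
$$R\Gamma(\X_{\infty},i^*_{\infty}\bz(d-n))\otimes^L R\Gamma(\X_{\infty},Ri^!_{\infty}\bz(n))\longrightarrow \bz[-2d-1]$$
furnished by the preceding corollary (applied with $n$ replaced by $d-n$): since all the complexes involved are perfect, this pairing identifies $D\big(R\Gamma_{W}(\X_{\infty},\bz(d-n))\big)$ with $R\Gamma(\X_\infty,Ri^!_\infty\bz(n))$. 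Feeding these identifications into the dualized triangle produces
$$R\Gamma(\X_{\infty},Ri^!_{\infty}\bz(n))\rightarrow R\Gamma_W(\overline{\X},\bz(n))\rightarrow R\Gamma_W(\X,\bz(n)),$$
which is the asserted exact triangle.

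The step I expect to require the most care, although it uses no new idea, is verifying that the two duality identifications above are compatible with the connecting morphisms, so that the resulting triangle is the canonical one --- equivalently, that it is literally the $D$-dual of the triangle (\ref{Wtri}) for $(\X,d-n)$, with its second map the one dual to $R\Gamma_{W,c}(\X,\bz(d-n))\to R\Gamma_{W}(\overline{\X},\bz(d-n))$ and its first map induced by the localization data relating $\X$, $\overline{\X}$ and $\X_\infty$. This amounts to tracing the construction of the product map (Proposition \ref{productmap}) and of the Weil-\'etale duality pairing of \cite{Flach-Morin-16}[Thm. 3.22] through the definitions; it is a bookkeeping exercise, but it is the only part of the argument that is not completely formal. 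Since the statement as written only asserts existence of the triangle, this last verification is needed only for the canonicity on which the later applications in this paper and in \cite{Flach-Morin-16} rely.
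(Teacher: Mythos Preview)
Your argument is correct and is precisely the intended one: the paper states this corollary without proof, immediately after the duality corollary for $R\Gamma(\X_\infty,i^*_\infty\bz(n))$ and $R\Gamma(\X_\infty,Ri^!_\infty\bz(d-n))$, because it follows exactly by dualizing the triangle (\ref{Wtri}) at $d-n$ and plugging in that duality together with Weil-\'etale duality \cite{Flach-Morin-16}[Thm.~3.22] and the definition of $R\Gamma_W(\X,\bz(n))$. Your closing remark about checking compatibility of the maps is well taken but, as you note, not needed for the bare existence statement.
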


\subsection{Proof of the duality theorem}
The proof of Theorem \ref{dualitythm} relies on the results proven below.

\subsubsection{Notations}

We denote by
$\iota:\X(\br)\rightarrow \X_{\infty}$
the closed immersion and  by $j:\X^{\circ}_{\infty}\rightarrow \X_{\infty}$ the complementary open immersion, where $\X^{\circ}_{\infty}:=\X_{\infty}-\X(\br)$. We set $\X(\bc)^{\circ}:=\X(\bc)-\X(\br)$. We denote by
$$p^{\circ}:\X(\bc)^{\circ}\rightarrow  \X^{\circ}_{\infty}$$
the quotient map, and by
$$\pi^{\circ}:\mathrm{Sh}(G_{\br},\X(\bc)^{\circ})\stackrel{\sim}{\rightarrow} \mathrm{Sh}( \X^{\circ}_{\infty})$$ 
the morphism of topoi induced by $\pi$, which is an equivalence since $G_{\br}$ has no fixed point on $\X(\bc)^{\circ}$. If $x\in\X(\br)$ then we denote $\iota_x:x\rightarrow \X(\br)$ (or $\iota_x:x\rightarrow \X_{\infty}$) the inclusion. The complex of sheaves over $\X_{\infty}$ denoted by $\bz(n)$ always refers to $i^*_{\infty}\bz(n)$.

We denote by $C^*(G_{\br},\bz(n)):=R\Gamma(G_{\br},\bz(n))$ the cohomology of $G_{\br}$ with coefficients in $(2i\pi)^n\bz$, by $\widehat{C}^*(G_{\br},\bz(n)):=R\widehat{\Gamma}(G_{\br},\bz(n))$ Tate cohomology, and by 
$C_*(G_{\br},\bz(n))$ the homology of $G_{\br}$ with coefficients in $(2i\pi)^n\bz$. We have a fiber sequence
$$C_*(G_{\br},\bz(n))\rightarrow C^*(G_{\br},\bz(n))\rightarrow \widehat{C}^*(G_{\br},\bz(n)).$$
Recall that, if $Z$ is a locally compact topological space, we denote by $\mathcal{D}_Z:=Rf^!\bz$ the dualizing complex, where $f:Z\rightarrow\{*\}$ is the map from $Z$ to the point.

\subsubsection{The duality map}

\begin{prop}\label{productmap}
For any $n\in\bz$, there is a canonical map
$$i^*_{\infty}\bz(n)\otimes^L \bz'(e-n)\rightarrow \mathcal{D}_{\X_{\infty}}[-2e]$$
in the derived category of abelian sheaves over $\X_{\infty}$.
\end{prop}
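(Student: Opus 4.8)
The plan is to produce the product map by pulling everything back to the $G_\br$-equivariant sheaves on $\X(\bc)$, where the complexes $i^*_\infty\bz(n)$ and $\bz'(e-n)$ are given as fibers of maps involving $R\pi_*\bz(n)$ and $R\widehat\pi_*\bz(n)$, and then using the multiplicativity of group cohomology together with Verdier/Poincar\'e duality on the complex manifold $\X(\bc)$. First I would recall that $\X(\bc)$ is a (possibly disconnected) compact complex manifold of real dimension $2e+2 = 2d$, so its dualizing complex is $\mathcal D_{\X(\bc)} \simeq \bz[2e+2]$ (orientability being automatic for complex manifolds), and that the quotient map $p:\X(\bc)\to\X_\infty$ is finite of degree $2$ away from $\X(\br)$. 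The cup product on equivariant cohomology gives a pairing $R\pi_*\bz(n)\otimes^L R\pi_*\bz(e-n)\to R\pi_*\bz(e)$, compatible with the analogous pairing on $R\widehat\pi_*$ coming from the ring structure on Tate cohomology; the key point is that multiplying a class in $\tau^{\leq n}$-part against one in the fiber over $\tau^{\geq n}$ (i.e.\ the shift defining $\bz'$) lands in degrees $\geq e+1 > e$, hence dies in the relevant truncated Tate term, so the product of the two fibers factors through the fiber defining $\bz'(e)$.

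The key steps, in order, are: (1) set up the cup-product pairing $R\pi_*\bz(n)\otimes^L R\pi_*\bz(m)\to R\pi_*\bz(n+m)$ and its compatibility with the map $R\pi_*\bz(n)\to \tau^{>n}R\widehat\pi_*\bz(n)$ appearing in the definition of $i^*_\infty\bz(n)$, and with the map $R\pi_*\bz(n)\to\tau^{\geq n}R\widehat\pi_*\bz(n)$ defining $\bz'(n)$; (2) use the octahedral axiom / the universal property of fibers to descend this to a map $i^*_\infty\bz(n)\otimes^L\bz'(e-n)\to\bz'(e)$, checking that the obstruction class vanishes because $\tau^{>e}$ of a product of something in degree $\leq n$ (roughly) with something in degree $\leq e-n$ is concentrated in degree exactly $\leq e$ — more carefully, the Tate-cohomology factor that $\bz'(e-n)$ kills combines with the $\tau^{>n}$-truncation in $i^*_\infty\bz(n)$ to force the product into the subcomplex $\bz'(e)$; (3) identify $\bz'(e)$ with $\mathcal D_{\X_\infty}[-2e]$: since $\bz'(e) = \mathrm{Fib}(R\pi_*\bz(e)\to\tau^{\geq e}R\widehat\pi_*\bz(e))$ and $\pi$ is, up to the equivariant structure, the descent of $p$, one computes $\bz'(e)\simeq R\pi_*(p^!\bz)[-2e-2]$, i.e.\ $R\pi_*\mathcal D_{\X(\bc)}[-2e-2]\simeq \mathcal D_{\X_\infty}[-2e]$, using that $\pi$ is a finite morphism of topoi so $R\pi_* = R\pi_!$ commutes with dualizing complexes and that $p^!\bz\simeq\bz(e+1)[2e+2]$ on the manifold after twisting by the orientation sheaf (which is trivial, $\bz(e+1)$ only contributing the Tate twist that is absorbed into the definition of the sheaves $\bz(n)$). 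Combining (2) and (3) gives the asserted map.

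The main obstacle I expect is step (3), the precise identification of $\bz'(e)$ with $\mathcal D_{\X_\infty}[-2e]$ including getting the twist and the behaviour along the fixed locus $\X(\br)$ exactly right: the equivariant sheaf $\bz(e+1) = (2i\pi)^{e+1}\bz$ carries a nontrivial $G_\br$-action, and whether $\mathcal D_{\X_\infty}$ sees a sign twist along $\X(\br)$ depends on comparing the local orientation of $\X_\infty$ near a real point with the $G_\br$-action on the normal bundle — this is exactly the kind of power-of-$2$ / sign subtlety that Prop.\ \ref{final-bettiprop} is designed to track, so I would expect the honest verification to reduce, via a local computation near $\X(\br)$ using $\iota^!$ and $j^*$ (the same decomposition used in the proof of Theorem \ref{dualitythm}), to the statement that $i^*_\infty\bz(n)$ and $\bz'(n)$ have the expected stalks and costalks. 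The cup-product formalism in step (1) and the fiber-chasing in step (2) are essentially formal once one is careful that all the truncation maps $\tau^{\geq *}, \tau^{>*}$ are compatible with products in the derived category of the topos $\mathrm{Sh}(\X_\infty)$.
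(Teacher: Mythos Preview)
Your approach is more elaborate than necessary, and step (3) has both a numerical error and a logical circularity.

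On the numerical point: $\X(\bc)$ is a complex manifold of complex dimension $e=d-1$, hence real dimension $2e$, not $2e+2$. So $\mathcal D_{\X(\bc)}\simeq\bz[2e]$, and $p^!\bz$ is (generically) $\bz$ in degree $0$, not $\bz(e+1)[2e+2]$.

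On the logic: the identification $\bz'(e)\simeq\mathcal D_{\X_\infty}[-2e]$ is, in the paper, a \emph{consequence} of Theorem~\ref{dualitythm} (apply it with $n=0$ and use $i^*_\infty\bz(0)=\bz$), and Theorem~\ref{dualitythm} in turn uses the present proposition. Your step (3) therefore cannot invoke that identification without either circularity or reproducing the entire local analysis (via $j^*$ and $R\iota^!$) that the paper carries out after this proposition. Your own caveat that step (3) would reduce to the same decomposition confirms this.

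The paper's proof avoids all of this. It does not factor through $\bz'(e)$ at all: it simply uses the tautological maps $i^*_\infty\bz(n)\to R\pi_*\bz(n)$ and $\bz'(e-n)\to R\pi_*\bz(e-n)$ coming from the fiber sequences, takes the cup product landing in $R\pi_*\bz(e)$, and then produces a map $R\pi_*\bz(e)\to\mathcal D_{\X_\infty}[-2e]$ directly by adjunction. Since $\X_\infty$ is compact, $Rf_!=Rf_*$, so by the universal property of $\mathcal D_{\X_\infty}=f^!\bz$ it suffices to give a map $Rf_*R\pi_*\bz(e)\simeq R\Gamma(G_\br,\X(\bc),\bz(e))\to\bz[-2e]$; this is the composite of the forgetful map to $R\Gamma(\X(\bc),\bz(e))$, the truncation $\tau^{\geq 2e}$, and the trace $H^{2e}(\X(\bc),\bz(e))\to\bz$. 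No truncation-compatibility or obstruction-vanishing argument (your step (2)) is needed.

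In short: drop steps (2) and (3) entirely. The product map is just ``forget the fibers, cup, then trace via adjunction.''
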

\begin{proof} 
Let $f$ be the map from $\X_{\infty}$ to the point. We start with the morphism
$$i^*_{\infty}\bz(n)\otimes^L \bz'(e-n)\rightarrow R\pi_*((2 i\pi)^n\bz) \otimes^L R\pi_*((2 i\pi)^{e-n}\bz)\rightarrow R\pi_*((2 i\pi)^e\bz).$$
Then the map
$$R\pi_*((2 i\pi)^e\bz)\rightarrow \mathcal{D}_{\X_{\infty}}[-2e]:=f^!\bz [-2e]$$
is given by
$$Rf_!R\pi_*((2 i\pi)^e\bz)\simeq R\Gamma(G_{\br},\X(\bc), (2 i\pi)^e\bz)\rightarrow \bz [-2e] $$
where the last map is 
$$R\Gamma(G_{\br},\X(\bc), (2 i\pi)^e\bz)\rightarrow R\Gamma(\X(\bc), (2 i\pi)^e\bz)\rightarrow\tau^{\geq 2e} R\Gamma(\X(\bc), (2 i\pi)^e\bz)\rightarrow \bz [-2e].$$
Note that $Rf_!=Rf_*$ since $\X_{\infty}$ is compact.
\end{proof}

\begin{defn}
For any $n\in\bz$, we consider the morphism
\begin{equation}\label{dualitymap}
\bz(n)\rightarrow R\underline{\mathrm{Hom}}(\bz'(e-n),\mathcal{D}_{\X_{\infty}}[-2e])
\end{equation}
induced by the product map above.
\end{defn}

\subsubsection{The map $j^*\bz(n)\rightarrow j^*R\underline{\mathrm{Hom}}(\bz'(e-n), \D_{\X_{\infty}}[-2e])$}
\begin{prop}\label{j}
The  canonical map 
$$j^*\bz(n)\rightarrow j^*R\underline{\mathrm{Hom}}(\bz'(e-n), \D_{\X_{\infty}}[-2e])$$
is an equivalence.
\end{prop}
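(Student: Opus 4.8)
The plan is to reduce the statement to the open locus $\X^\circ_\infty = \X_\infty - \X(\br)$, where the quotient map $p^\circ$ is a covering in the relevant sense and all the Tate-cohomology modifications in the definitions of $\bz(n)$ and $\bz'(e-n)$ become invisible. First I would note that over $\X^\circ_\infty$ the morphism of topoi $\pi^\circ$ is an equivalence, and that $\widehat{C}^*(G_\br,\bz(n))$ is supported (as a sheaf computation) only along $\X(\br)$, so that $j^*R\widehat{\pi}_*\bz(n) = 0$. Hence both truncation fibres collapse and we get canonical identifications
\[
j^*\bz(n) = j^* R\pi_*((2i\pi)^n\bz) \simeq (\pi^\circ)_* p^{\circ *}((2i\pi)^n\bz),
\qquad
j^*\bz'(e-n) \simeq (\pi^\circ)_* p^{\circ *}((2i\pi)^{e-n}\bz).
\]
Under the equivalence $\pi^\circ$ these are just the locally constant sheaves $(2i\pi)^n\bz$ and $(2i\pi)^{e-n}\bz$ on $\X(\bc)^\circ$ viewed $G_\br$-equivariantly; restricted to $\X^\circ_\infty$ they are locally constant of rank one (the $G_\br$-action on the stalk being by $\pm1$), in particular perfect and invertible in the derived category. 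The product map of Proposition \ref{productmap}, restricted to $j^*$, is then identified with the multiplication pairing $(2i\pi)^n\bz \otimes (2i\pi)^{e-n}\bz \to (2i\pi)^e\bz$ followed by the trace, so the question becomes whether $(2i\pi)^e\bz$ on $\X(\bc)^\circ$ together with this trace realizes $\D_{\X^\circ_\infty}[-2e]$ as the Verdier dual of the point — equivalently whether $p^{\circ *}$ of the orientation sheaf matches $\D_{\X(\bc)^\circ}[-2e]$.

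The key computational input is that $\X(\bc)^\circ$ is a (possibly non-orientable, because of $G_\br$) topological manifold of real dimension $2d = 2(e+1)$, so $\D_{\X(\bc)^\circ} \simeq \mathrm{or}_{\X(\bc)^\circ}[2d]$ where $\mathrm{or}$ is the orientation $\bz$-sheaf; but a complex manifold is canonically oriented, so $\mathrm{or}_{\X(\bc)^\circ}$ is the constant sheaf $\bz$ and $\D_{\X(\bc)^\circ} \simeq \bz[2d]$. On the quotient $\X^\circ_\infty$ the dualizing complex is $p^\circ_*$ of this twisted by the orientation character of the free $G_\br$-action, and one checks that the pairing $(2i\pi)^n\bz \otimes (2i\pi)^{e-n}\bz \to (2i\pi)^e\bz$ carries exactly the needed sign: $G_\br$ acts on $(2i\pi)^e\bz$ by $(-1)^e = (-1)^{d-1}$, and complex conjugation on $\X(\bc)$ reverses orientation of the $2d$-manifold precisely when $d$ is odd, i.e. by $(-1)^d$; together with the degree shift $[-2e] = [-2d+2]$ these combine (using $2d$ even) to identify $j^*R\pi_*((2i\pi)^e\bz)[-2e]$ with $\D_{\X^\circ_\infty}$. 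Thus the map \eqref{dualitymap} restricted to $\X^\circ_\infty$ is the canonical biduality isomorphism for an invertible object, hence an equivalence.

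I expect the main obstacle to be purely bookkeeping of the $G_\br$-equivariant structures and the signs: one must be careful that "invertible sheaf on $\X^\circ_\infty$" is meant in the derived category of $G_\br$-equivariant sheaves on $\X(\bc)^\circ$, that the Verdier dualizing complex on a quotient by a free finite group action descends correctly (which is standard, since $p^\circ$ is a finite covering so $p^{\circ !} = p^{\circ *}$ up to the orientation twist of the covering), and that the trace map built in Proposition \ref{productmap} through $\tau^{\geq 2e}R\Gamma(\X(\bc),-)$ restricts on the open part to the genuine fundamental-class trace rather than something off by a unit. Once these identifications are in place, the equivalence is formal: both sides of the map are invertible objects, and a map between invertible objects that is the canonical evaluation pairing is automatically an isomorphism. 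Checking the pairing is the canonical one reduces to a stalk computation at a point of $\X(\bc)^\circ$, where everything is the elementary fact that $\mathbb{Z} \otimes \mathbb{Z} \to \mathbb{Z}$ is a perfect pairing.
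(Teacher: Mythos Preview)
Your overall strategy is correct and parallels the paper's: over the open locus $\X^\circ_\infty$ the action of $G_\br$ is free, so the Tate-cohomology truncations vanish and both $j^*\bz(n)$ and $j^*\bz'(e-n)$ reduce to the locally constant rank-one sheaves $\pi^\circ_*(2i\pi)^n\bz$ and $\pi^\circ_*(2i\pi)^{e-n}\bz$; the map then becomes a biduality statement for invertible objects.

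However, your dimension and sign bookkeeping is off. The generic fibre $\X_\bq$ has dimension $e=d-1$, so $\X(\bc)^\circ$ is a complex manifold of \emph{complex} dimension $e$ and \emph{real} dimension $2e$, not $2d$. Hence $\D_{\X(\bc)^\circ}\simeq\bz[2e]$ and $\D_{\X(\bc)^\circ}[-2e]\simeq\bz$, which is exactly what is needed. Similarly, complex conjugation negates the imaginary part of each of the $e$ complex coordinates, so it acts on orientation by $(-1)^e$, not $(-1)^d$; this matches the $G_\br$-action on $(2i\pi)^e\bz$ on the nose, with no residual sign to absorb. With these two corrections your orientation-character argument goes through and identifies $\pi^\circ_*(2i\pi)^e\bz$ with $\D_{\X^\circ_\infty}[-2e]$.

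The paper avoids this sign tracking altogether. Instead of descending the dualizing complex to the quotient and matching twists, it simply pulls the map back along the finite \'etale double cover $p^\circ:\X(\bc)^\circ\to\X^\circ_\infty$. Since $p^{\circ,*}$ is conservative, it suffices to check that the pulled-back map
\[
(2i\pi)^{e-n}\bz\longrightarrow R\underline{\Hom}\bigl((2i\pi)^n\bz,\,\D_{\X(\bc)^\circ}\bigr)[-2e]
\]
is an equivalence on $\X(\bc)^\circ$, and this is immediate from Verdier duality on the oriented $2e$-manifold $\X(\bc)^\circ$. This is cleaner: one never has to decide whether $\X^\circ_\infty$ is orientable or to identify its orientation sheaf.
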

\begin{proof}We replace $n$ by $e-n$. We have
\begin{eqnarray*}
j^*R\underline{\mathrm{Hom}}(\bz'(n), \D_{\X_{\infty}}[-2e])&\simeq& R\underline{\mathrm{Hom}}_{\mathrm{Sh}(\X^{\circ}_{\infty})}(j^*\bz'(n), \D_{\X^{\circ}_{\infty}}[-2e])\\
&\simeq& R\underline{\mathrm{Hom}}_{\mathrm{Sh}(\X^{\circ}_{\infty})}(\pi_*^{\circ}(2i\pi)^n\bz, \D_{\X^{\circ}_{\infty}})[-2e]. 
\end{eqnarray*}
Similarly, we have $j^*\bz(e-n)=\pi_*^{\circ}(2i\pi)^{e-n}\bz$. So we need to check that the map
$$\pi_*^{\circ}(2i\pi)^{e-n}\bz\rightarrow R\underline{\mathrm{Hom}}_{\mathrm{Sh}(\X^{\circ}_{\infty})}(\pi_*^{\circ}(2i\pi)^n\bz, \D_{\X^{\circ}_{\infty}})[-2e]$$
is an equivalence. The map $p^{\circ}: \X(\bc)^{\circ}\rightarrow \X^{\circ}_{\infty}$ is a finite \'etale Galois cover, hence $p^{\circ,*}$ is conservative.
Hence it is enough to check that
$$p^{\circ,*}\pi_*^{\circ}(2i\pi)^{e-n}\bz\rightarrow  R\underline{\mathrm{Hom}}_{\mathrm{Sh}(\X(\bc)^{\circ})}(p^{\circ,*}\pi_*^{\circ}(2i\pi)^n\bz, \D_{\X(\bc)^{\circ}})[-2e]$$
is an equivalence. But we have
$$p^{\circ,*}\pi_*^{\circ}(2i\pi)^{n}\bz \simeq (2i\pi)^{n}\bz,$$
hence one is reduced to observe that
$$(2i\pi)^{e-n}\bz\rightarrow  R\underline{\mathrm{Hom}}_{\mathrm{Sh}(\X(\bc)^{\circ})}((2i\pi)^n\bz, \D_{\X(\bc)^{\circ}})[-2e]$$
is an equivalence by Verdier duality on the complex (hence orientable) manifold $\X(\bc)^{\circ}$.
\end{proof}

\subsubsection{The complex $\iota_x^*R\iota^!\bz(n)$}

\begin{lem} For any $n\in\bz$ and any $x\in\X(\br)$, we have a fiber sequence
$$R\Gamma(G_\br, \bz(n))\rightarrow \iota_x^* Rj_*j^*\bz(n) \rightarrow R\Gamma(G_{\br},\bz(n-e))[-(e-1)]$$
and $\iota_x^* Rj_*j^*\bz(n)$ is cohomologically concentrated in  degrees $\in [0,e-1]$.
\end{lem}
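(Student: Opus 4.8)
The plan is to compute the stalk $\iota_x^{*}Rj_{*}j^{*}\bz(n)$ geometrically. Since $G_{\br}$ acts freely on $\X(\bc)^{\circ}$, Tate cohomology vanishes there, so $j^{*}R\widehat{\pi}_{*}\bz(n)=0$ and hence $j^{*}\bz(n)=j^{*}R\pi_{*}\bz(n)=\pi^{\circ}_{*}(2i\pi)^{n}\bz$, a rank-one local system on $\X^{\circ}_{\infty}$. Thus $\iota_x^{*}Rj_{*}j^{*}\bz(n)=\varinjlim_{U}R\Gamma(U^{\circ}/c,\pi^{\circ}_{*}(2i\pi)^{n}\bz)=\varinjlim_{U}R\Gamma_{G_{\br}}(U^{\circ},(2i\pi)^{n}\bz)$, where $U$ runs over $G_{\br}$-stable open neighbourhoods of $x$ in $\X(\bc)$, $U^{\circ}:=U\setminus\X(\br)$, and the second identity uses that $\pi^{\circ}$ is an equivalence of topoi. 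As $\X_{\bq}$ is smooth over $\bq$ of dimension $e=d-1$ (by flatness and regularity) and $x$ is a real point, a neighbourhood of $x$ in $(\X(\bc),\X(\br))$ is $G_{\br}$-equivariantly isomorphic to a ball in $(\bc^{e},\br^{e})$ with $G_{\br}$ acting by complex conjugation, and the equivariant deformation retraction $x+iy\mapsto iy/|y|$ then identifies $U^{\circ}$, $G_{\br}$-equivariantly up to homotopy, with the sphere $S^{e-1}$ carrying the free antipodal action. Since the equivariant cohomology of a free action is the cohomology of the quotient,
$$\iota_x^{*}Rj_{*}j^{*}\bz(n)\simeq R\Gamma\big(S^{e-1}/\{\pm1\},\,\mathcal{L}\big),$$
where $\mathcal{L}$ is the rank-one local system on real projective $(e-1)$-space $S^{e-1}/\{\pm1\}$ with monodromy $(-1)^{n}$. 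As $S^{e-1}/\{\pm1\}$ is a compact manifold of dimension $e-1$ (with the conventions $S^{-1}=\varnothing$ and $S^{0}/\{\pm1\}=\{*\}$ covering $e=0$, where $\X(\bc)\setminus\X(\br)$ is empty near $x$, and $e=1$), its cohomology with coefficients in $\mathcal{L}$ vanishes outside degrees $[0,e-1]$; this is the concentration assertion.

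To produce the fibre sequence, assume first $e\ge2$ and consider the fibration $S^{e-1}\to S^{e-1}/\{\pm1\}\xrightarrow{\ \rho\ }BG_{\br}$ classifying the double cover, so that $R\Gamma(S^{e-1}/\{\pm1\},\mathcal{L})\simeq R\Gamma(BG_{\br},R\rho_{*}\mathcal{L})$. Each $\mathcal{H}^{q}(R\rho_{*}\mathcal{L})$ is the local system on $BG_{\br}$ whose stalk is $H^{q}(S^{e-1},\mathcal{L}|_{S^{e-1}})=H^{q}(S^{e-1},\bz)$ — the fibre restriction of $\mathcal{L}$ being trivial, automatically for $e\ge3$ and by direct inspection for $e=2$ — with $G_{\br}=\pi_{1}(BG_{\br})$ acting through the monodromy of the fibration, i.e. through the antipodal map on the fibre, twisted by $(-1)^{n}$. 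Hence $R\rho_{*}\mathcal{L}$ has cohomology concentrated in degrees $0$ and $e-1$, with $\mathcal{H}^{0}(R\rho_{*}\mathcal{L})=\bz(n)$ and $\mathcal{H}^{e-1}(R\rho_{*}\mathcal{L})=\bz(n-e)$ as $G_{\br}$-modules, the exponent $n-e$ arising from the coefficient twist $(-1)^{n}$ together with the degree $(-1)^{e}$ of the antipodal map on $H^{e-1}(S^{e-1})$. The truncation triangle $\tau^{\le0}R\rho_{*}\mathcal{L}\to R\rho_{*}\mathcal{L}\to\tau^{\ge1}R\rho_{*}\mathcal{L}\xrightarrow{+1}$ therefore reads
$$\bz(n)\longrightarrow R\rho_{*}\mathcal{L}\longrightarrow\bz(n-e)[-(e-1)]\xrightarrow{\ +1\ },$$
and applying $R\Gamma(G_{\br},-)=R\Gamma(BG_{\br},-)$ together with the identification of the first paragraph gives the desired fibre sequence. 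For $e=1$ one computes $\iota_x^{*}Rj_{*}j^{*}\bz(n)\simeq R\Gamma(G_{\br},\bz[G_{\br}])\simeq\bz[0]$, and the fibre sequence is obtained by applying $R\Gamma(G_{\br},-)$ to the (non-split) short exact sequence of $G_{\br}$-modules $0\to\bz(n)\to\bz[G_{\br}]\to\bz(n-1)\to0$; for $e=0$, $\X(\bc)\setminus\X(\br)$ is empty near $x$, so $\iota_x^{*}Rj_{*}j^{*}\bz(n)=0$ and the fibre sequence is the trivial one.

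The only substantive points are the construction of the $G_{\br}$-equivariant local chart near the real point $x$ — routine, via $\widehat{\co}_{\X,x}\cong\kappa(x)[[t_{1},\dots,t_{e}]]$ with $\kappa(x)\subset\br$ — and, above all, the sign bookkeeping: identifying the $G_{\br}$-module $\mathcal{H}^{e-1}(R\rho_{*}\mathcal{L})=\bz(n-e)$ by correctly combining the coefficient twist $(-1)^{n}$ with the degree $(-1)^{e}$ of the antipodal map, and checking that $\mathcal{L}$ restricts trivially to the fibre so that this cohomology group is $\bz$ rather than a proper quotient. The vanishing of Tate cohomology on $\X^{\circ}_{\infty}$ and the remaining diagram chases are formal.
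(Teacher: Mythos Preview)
Your proof is correct and follows essentially the same route as the paper: both identify the stalk with the $G_{\br}$-equivariant cohomology of $S^{e-1}$ (equivalently, the cohomology of $\br P^{e-1}$ with coefficients in the local system $\mathcal{L}$) via the same local chart, deduce concentration from $\dim\br P^{e-1}=e-1$, and obtain the fibre sequence from the two nonzero cohomology groups of the fibre $S^{e-1}$, with the same sign analysis giving $\bz(n-e)$ in top degree. Your phrasing via the fibration $S^{e-1}\to\br P^{e-1}\xrightarrow{\rho}BG_{\br}$ and the truncation of $R\rho_{*}\mathcal{L}$ is just a repackaging of the paper's fibre sequence $\bz(n)\to R\Gamma(S^{e-1},\bz(n))\to\bz(n-e)[-(e-1)]$ in $D(\bz[G_{\br}])$; the only extra remark in the paper is that the boundary map is the nontrivial class in $H^{e}(G_{\br},\bz(e))\cong\bz/2\bz$, which the concentration forces, but this is not needed for the lemma as stated.
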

\begin{proof}
For $e=0$, the map $j$ is both a closed and an open immersion hence $\iota_x^* Rj_*j^*\bz(n)=0$. So the result is obvious in that case, hence we may assume $e\geq 1$.

Note first that $j^*\bz(n)\simeq R\pi^{\circ}_*((2i\pi)^n\bz)$. Let $x\in\X(\br)\subset\X(\bc)$. For a point $z\in\X(\bc)$ in the neighbourhood of $x$, we have $$z=(a_1,b_1,\cdots,a_e,b_e) \in \bc^{e}= (\br\oplus i\cdot \br)^{e}$$ where $\sigma$ acts as follows
$$(a_1,\cdots ,a_e,b_1,\cdots,b_e)\mapsto (a_1,\cdots , a_e,-b_1,\cdots,-b_e)\in \br^{e}\oplus i\cdot \br ^{e}.$$
So a basic open neighborhood of $x\in\X(\br)$ in $\X(\bc)$ is of the form $B^{e}\times B^{e}$
where $B^{e}$ denotes an open ball in $\br^{e}$, and $\sigma$ acts trivially on the first ball and by multiplication by $-1$ on the second ball. We have $$\X(\br)\cap (B^{e}\times B^{e})= B^{e}\times 0$$ and a $G_{\br}$-equivariant homotopy equivalence
$$\X(\bc)^{\circ}\cap (B^{e}\times B^{e})= B^{e}\times (B^{e}-0)\simeq B^{e}\times {\bf S}^{e-1}\simeq {\bf S}^{e-1}$$
where  $G_{\br}$ acts by its antipodal action on the $(e-1)$-sphere  ${\bf S}^{e-1}$. We obtain
\begin{eqnarray*}
\iota_x^* Rj_*j^*\bz(n)&\simeq& \mathrm{colim}_{x\in U\subset\X_{\infty}} R\Gamma(U-\X(\br),\bz(n))\\
&\simeq&  \mathrm{colim}_{x\in U\subset\X_{\infty}} R\Gamma(G_{\br},p^{-1}(U-\X(\br)),\bz(n))\\
&\simeq& R\Gamma(G_{\br},{\bf S}^{e-1}, \bz(n)) 
\end{eqnarray*}
where $G_{\br}$ acts both on  ${\bf S}^{e-1}$ and $\bz(n):=(2i\pi)^n\bz$. But we have a fiber sequence in the derived category of $\bz[G_{\br}]$-modules
$$\bz(n)\rightarrow R\Gamma({\bf S}^{e-1},\bz(n))\rightarrow \bz(n-e)[-(e-1)]$$
where the boundary map $\bz(n-e))[-(e-1)]\rightarrow \bz(n)[1]$ is the non-trivial class in 
$$\mathrm{Hom}_{\bz[G_{\br}]}(\bz(n-e))[-(e-1)], \bz(n)[1])\simeq \mathrm{Hom}_{\bz[G_{\br}]}(\bz, \bz(e)[e])\simeq H^{e}(G_{\br},\bz(e))\simeq \bz/2\bz.$$
Indeed, it must be the non-trivial class because
 $$R\Gamma(G_{\br},{\bf S}^{e-1}, \bz(n))\simeq R\Gamma({\bf S}^{e-1}/\{\pm 1\}, \bz(n))$$
is cohomologically concentrated in  degrees $\in [0,e-1]$ since ${\bf S}^{e-1}/\{\pm 1\}$ is a $(e-1)$-manifold.
\end{proof}

\begin{lem} For any $n\in\bz$, we have
$$ \iota_x^*R\iota^!\bz(n)\simeq \mathrm{Fib}\left(R\Gamma(G_{\br},\bz(n-e))[-e]\rightarrow\tau^{> n}R\widehat{\Gamma}(G_\br, \bz(n))\right).$$
\end{lem}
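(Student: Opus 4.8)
\emph{Proof plan.} The plan is to play the standard localization triangle of the closed immersion $\iota$ against the defining triangle of $\bz(n)=i_\infty^*\bz(n)$ and the preceding lemma, and then to read off the answer from the octahedral axiom. First I would recall that for any object $\mathcal F$ of the derived category of abelian sheaves on $\X_\infty$ there is an exact triangle $\iota_*R\iota^!\mathcal F\to\mathcal F\to Rj_*j^*\mathcal F$ (local cohomology). Applying the exact functor $\iota_x^*$ in the case $\mathcal F=i_\infty^*\bz(n)$, and using that for $x\in\X(\br)$ the object $\iota_x^*\iota_*R\iota^!\bz(n)$ is simply $\iota_x^*R\iota^!\bz(n)$ (the stalk at $x$ of $R\iota^!\bz(n)$), I obtain an exact triangle
\[\iota_x^*R\iota^!\bz(n)\longrightarrow\iota_x^*\bz(n)\xrightarrow{\ \delta\ }\iota_x^*Rj_*j^*\bz(n).\]
By the preceding lemma the target of $\delta$ sits in an exact triangle $R\Gamma(G_\br,\bz(n))\xrightarrow{\ \alpha\ }\iota_x^*Rj_*j^*\bz(n)\to R\Gamma(G_\br,\bz(n-e))[-(e-1)]$, whence $\mathrm{Fib}(\alpha)\simeq R\Gamma(G_\br,\bz(n-e))[-e]$.

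Next I would identify $\iota_x^*\bz(n)$ by applying the exact functor $\iota_x^*$ to the defining triangle $i_\infty^*\bz(n)=\mathrm{Fib}\bigl(R\pi_*\bz(n)\to\tau^{>n}R\widehat\pi_*\bz(n)\bigr)$. Here one uses, first, that a small $G_\br$-stable neighbourhood of $x$ in $\X(\bc)$ is equivariantly contractible onto the fixed point $x$ — immediate from the local model $B^{e}\times B^{e}$ recalled in the preceding lemma — so that $\iota_x^*R\pi_*\bz(n)\simeq R\Gamma(G_\br,\bz(n))$ and $\iota_x^*R\widehat\pi_*\bz(n)\simeq R\widehat\Gamma(G_\br,\bz(n))$; and, second, that $\iota_x^*$ is $t$-exact, hence commutes with $\tau^{>n}$. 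This yields an exact triangle
\[\iota_x^*\bz(n)\xrightarrow{\ \beta\ }R\Gamma(G_\br,\bz(n))\longrightarrow\tau^{>n}R\widehat\Gamma(G_\br,\bz(n)),\]
so that $\mathrm{Fib}(\beta)\simeq\tau^{>n}R\widehat\Gamma(G_\br,\bz(n))[-1]$.

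The step requiring care is the identity $\delta=\alpha\circ\beta$. To establish it I would argue that the adjunction unit $i_\infty^*\bz(n)\to Rj_*j^*i_\infty^*\bz(n)$ factors through the structure map $i_\infty^*\bz(n)\to R\pi_*\bz(n)$: applying $j^*$ to this structure map gives an isomorphism, because $j^*\tau^{>n}R\widehat\pi_*\bz(n)=0$ (the group $G_\br$ acts freely on $\X(\bc)^\circ$), so by naturality of the unit the composite $i_\infty^*\bz(n)\to R\pi_*\bz(n)\to Rj_*j^*R\pi_*\bz(n)\simeq Rj_*j^*i_\infty^*\bz(n)$ is the unit itself; passing to stalks at $x$ identifies this factorization with $\alpha\circ\beta$, where $\alpha$ is exactly the restriction map from a contractible $G_\br$-neighbourhood of $x$ to its punctured version that appears in the preceding lemma. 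Granting $\delta=\alpha\beta$, the octahedral axiom applied to $\iota_x^*\bz(n)\xrightarrow{\beta}R\Gamma(G_\br,\bz(n))\xrightarrow{\alpha}\iota_x^*Rj_*j^*\bz(n)$ produces an exact triangle $\mathrm{Fib}(\beta)\to\mathrm{Fib}(\delta)\to\mathrm{Fib}(\alpha)$, i.e.
\[\tau^{>n}R\widehat\Gamma(G_\br,\bz(n))[-1]\longrightarrow\iota_x^*R\iota^!\bz(n)\longrightarrow R\Gamma(G_\br,\bz(n-e))[-e],\]
and hence $\iota_x^*R\iota^!\bz(n)\simeq\mathrm{Fib}\bigl(R\Gamma(G_\br,\bz(n-e))[-e]\to\tau^{>n}R\widehat\Gamma(G_\br,\bz(n))\bigr)$, as asserted. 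The connecting map is the canonical one furnished by the octahedron; unwinding the preceding lemma it is, up to the comparison of group cohomology with Tate cohomology, cup product with the generator of $\widehat{H}^{e}(G_\br,\bz(e))$ followed by the truncation map.

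I expect the main obstacle to be precisely the compatibility $\delta=\alpha\beta$: the two maps come from genuinely different constructions — a localization triangle on $\X_\infty$ versus a direct stalk computation on a punctured neighbourhood — and matching them cleanly requires carefully unwinding the adjunction units. The remaining ingredients are routine: the stalk computations for $R\pi_*\bz(n)$ and $R\widehat\pi_*\bz(n)$ are already implicit in the preceding lemma, $\iota_x^*$ is visibly exact, and the only bookkeeping is the $-(e-1)$ versus $-e$ shift incurred in passing between cones and fibers.
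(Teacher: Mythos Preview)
Your argument is correct and uses the same three ingredients as the paper---the localization triangle for $\iota$, the preceding lemma on $\iota_x^*Rj_*j^*\bz(n)$, and the stalk at $x$ of the defining triangle of $i_\infty^*\bz(n)$---assembled via the octahedral axiom, which is exactly the $3\times 3$ diagram the paper writes down read columnwise. The only organizational difference is that the paper first simplifies $\iota_x^*\bz(n)$ explicitly, treating $n\geq 0$ (where $\iota_x^*\bz(n)\simeq\tau^{\leq n}R\Gamma(G_\br,\bz(n))$, so that $\tau^{>n}R\widehat\Gamma$ appears as $\tau^{>n}R\Gamma$) and $n<0$ (where it invokes a forward reference to identify $\iota_x^*\bz(n)\simeq\tau_{\leq -n-2}C_*(G_\br,\bz(n))$) in two separate diagrams, whereas you keep the fiber sequence $\iota_x^*\bz(n)\to R\Gamma(G_\br,\bz(n))\to\tau^{>n}R\widehat\Gamma(G_\br,\bz(n))$ intact and run a single argument valid for all $n$. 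Your route is slightly cleaner for this particular lemma and avoids the case split and forward reference; the paper's explicit description of $\iota_x^*\bz(n)$ is not actually needed here (though it is used elsewhere). Your verification of the compatibility $\delta=\alpha\beta$ via naturality of the adjunction unit and the vanishing $j^*\tau^{>n}R\widehat\pi_*\bz(n)=0$ is the point the paper leaves implicit in asserting that its diagram commutes.
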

\begin{proof}
First we assume $n\geq 0$, so that $\iota_x^* \bz(n)\simeq \tau^{\leq n}R\Gamma(G_\br, \bz(n))$. Then we have the following diagram with exact rows and columns:
\[ \xymatrix{
\tau^{> n}R\Gamma(G_\br, \bz(n))[-1]\ar[d]\ar[r]^{} &\tau^{\leq n}R\Gamma(G_\br, \bz(n))\ar[d]^{} \ar[r]^{}&R\Gamma(G_\br, \bz(n))\ar[d] \\
\iota_x^*R\iota^!\bz(n)\ar[r]^{}\ar[d]&\iota_x^* \bz(n) \ar[r]^{}\ar[d]& \iota_x^* Rj_*j^*\bz(n) \ar[d]\\
R\Gamma(G_{\br},\bz(n-e))[-e]\ar[r]^{}&0 \ar[r]^{}& R\Gamma(G_{\br},\bz(n-e))[-(e-1)] 
}
\]
Now we assume $n< 0$. By Lemma \ref{lem-simplify}, we have an equivalence
$$\iota_x^* \bz(n)\simeq \tau_{\leq - n-2}C_*(G_\br, \bz(n))$$
where both sides vanish for $n=-1$. We obtain the following diagram with exact rows and columns:
\[ \xymatrix{
(\tau^{> n}\widehat{C}(G_\br, \bz(n)))[-1]\ar[d]\ar[r]^{} &\tau_{\leq - n-2}C_*(G_\br, \bz(n))\ar[d]^{} \ar[r]^{}&C^{*}(G_\br, \bz(n))\ar[d] \\
\iota_x^*R\iota^!\bz(n)\ar[r]^{}\ar[d]&\iota_x^*\bz(n) \ar[r]^{}\ar[d]& \iota_x^*Rj_*j^*\bz(n) \ar[d]\\
C^*(G_{\br},\bz(n-e))[-e]\ar[r]^{}&0 \ar[r]^{}& C^*(G_{\br},\bz(n-e))[-(e-1)] 
}
\]
\end{proof}

\begin{prop}\label{iota1}
For $n < e$, we have
$$ \iota_x^*R\iota^!\bz(n)\simeq (\tau_{\leq e-n-2} C_*(G_\br, \bz(n-e)))[-e].$$
For $ n\geq e$, we have
$$ \iota_x^*R\iota^!\bz(n)\simeq (\tau^{\leq n-e} R\Gamma(G_\br, \bz(n-e)))[-e].$$
\end{prop}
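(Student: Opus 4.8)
The plan is to first untwist via the $2$-periodicity of the Tate cohomology of $G_{\br}$, reducing both assertions to one elementary computation, and then to treat the ranges $n\geq e$ and $n<e$ separately. Recall that $\bigoplus_{j}\widehat{C}^{*}(G_{\br},\bz(j))$ has cohomology the graded field $\bz/2[b,b^{-1}]$, with $b$ a generator of $\widehat{H}^{1}(G_{\br},\bz(1))\cong\bz/2$; so cup product with $b^{e}\in\widehat{H}^{e}(G_{\br},\bz(e))$ is an equivalence $R\widehat{\Gamma}(G_{\br},\bz(m))[-e]\xrightarrow{\ \sim\ }R\widehat{\Gamma}(G_{\br},\bz(n))$, where $m:=n-e$. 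By the construction of the boundary map in the preceding lemmas (through the equivariant cofibre sequence for $\mathbf{S}^{e-1}$ and the $3\times3$ diagram), that boundary map is cup product with the nontrivial class of $H^{e}(G_{\br},\bz(e))\cong\bz/2$ — whose image in $\widehat{H}^{e}$ is $b^{e}$ — followed by the comparison $R\Gamma\to R\widehat{\Gamma}$ and truncation; using naturality of cup products against $R\Gamma\to R\widehat{\Gamma}$, the above equivalence then transports the previous lemma into
$$\iota_x^{*}R\iota^{!}\bz(n)\ \simeq\ \mathrm{Fib}\big(R\Gamma(G_{\br},\bz(m))\longrightarrow\tau^{>m}R\widehat{\Gamma}(G_{\br},\bz(m))\big)\,[-e],$$
the arrow being the canonical comparison map followed by truncation. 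In the case $n\geq e$, so $m\geq 0$, the map $R\Gamma(G_{\br},\bz(m))\to R\widehat{\Gamma}(G_{\br},\bz(m))$ is an equivalence in cohomological degrees $\geq 1$ while $\tau^{>m}$ retains only degrees $\geq m+1\geq 1$, so this composite is identified with the truncation map $R\Gamma(G_{\br},\bz(m))\to\tau^{>m}R\Gamma(G_{\br},\bz(m))$, whose fibre is $\tau^{\leq m}R\Gamma(G_{\br},\bz(m))$; this gives $\iota_x^{*}R\iota^{!}\bz(n)\simeq(\tau^{\leq n-e}R\Gamma(G_{\br},\bz(n-e)))[-e]$.

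For $n<e$, so $m<0$, I would apply the octahedral axiom to $R\Gamma(G_{\br},\bz(m))\to R\widehat{\Gamma}(G_{\br},\bz(m))\to\tau^{>m}R\widehat{\Gamma}(G_{\br},\bz(m))$. The fibre of the first map is $C_{*}(G_{\br},\bz(m))$ (the fibre sequence $C_{*}\to C^{*}\to\widehat{C}^{*}$ recalled in the text), and the fibre of the second is $\tau^{\leq m}R\widehat{\Gamma}(G_{\br},\bz(m))$, so one gets an exact triangle
$$C_{*}(G_{\br},\bz(m))\longrightarrow\mathrm{Fib}\big(R\Gamma\to\tau^{>m}R\widehat{\Gamma}\big)\longrightarrow\tau^{\leq m}R\widehat{\Gamma}(G_{\br},\bz(m))\xrightarrow{\ \partial\ }C_{*}(G_{\br},\bz(m))[1],$$
in which $\partial$ is the restriction to $\tau^{\leq m}R\widehat{\Gamma}$ of the boundary $\beta\colon R\widehat{\Gamma}(G_{\br},\bz(m))\to C_{*}(G_{\br},\bz(m))[1]$ of $C_{*}\to C^{*}\to\widehat{C}^{*}$. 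Rotating, $\mathrm{Fib}(R\Gamma\to\tau^{>m}R\widehat{\Gamma})\simeq\mathrm{Cone}(\partial[-1])$. Because $\tau^{\leq m}R\widehat{\Gamma}[-1]$ is concentrated in cohomological degrees $\leq m+1$ whereas $\tau^{\geq m+2}C_{*}$ is concentrated in degrees $\geq m+2$, the map $\partial[-1]$ factors uniquely through $\tau^{\leq m+1}C_{*}(G_{\br},\bz(m))\hookrightarrow C_{*}(G_{\br},\bz(m))$; writing $\phi$ for this factorization, once one knows $\phi$ is an equivalence it follows that
$$\mathrm{Cone}(\partial[-1])\ \simeq\ \mathrm{Cone}\big(\tau^{\leq m+1}C_{*}(G_{\br},\bz(m))\hookrightarrow C_{*}(G_{\br},\bz(m))\big)\ =\ \tau^{\geq m+2}C_{*}(G_{\br},\bz(m))\ =\ \tau_{\leq -m-2}C_{*}(G_{\br},\bz(m)),$$
which after the $[-e]$ shift is the desired formula for $n<e$.

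The remaining point is that $\phi$ is an equivalence, which I would verify on cohomology. In degree $j\leq m+1$, $\phi$ is the homomorphism $\widehat{H}^{j-1}(G_{\br},\bz(m))\to H^{j}(C_{*}(G_{\br},\bz(m)))=H_{-j}(G_{\br},\bz(m))$ induced by $\beta$. Since $H^{i}(R\Gamma(G_{\br},-))=0$ for $i<0$, the long exact sequence of $C_{*}\to C^{*}\to\widehat{C}^{*}$ shows that $\beta\colon\widehat{H}^{i}(G_{\br},\bz(m))\to H^{i+1}(C_{*}(G_{\br},\bz(m)))$ is an isomorphism for $i\leq -2$, which settles all relevant degrees when $m\leq -2$. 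When $m=-1$ the only remaining case is $j=0$, where $\phi$ is $\beta\colon\widehat{H}^{-1}(G_{\br},\bz(-1))\to H_{0}(G_{\br},\bz(-1))$; the same long exact sequence identifies this with the inclusion of $\ker(\mathrm{Nm})$ into $\bz(-1)_{G_{\br}}$, and the norm $\mathrm{Nm}=1+\sigma$ acts as $0$ on $\bz(-1)$ because complex conjugation acts there as $-1$, so $\beta$ is the isomorphism $\bz/2\xrightarrow{\ \sim\ }\bz/2$. Hence $\phi$ is a cohomology isomorphism.

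The step I expect to be the main obstacle is making the $2$-periodicity reduction rigorous: passing from the isomorphism $b^{e}\cup(-)$ on Tate cohomology to an equivalence of complexes, and verifying its compatibility with the boundary map produced by the preceding lemmas. A secondary subtlety, in the case $n<e$, is identifying $\phi$ with the connecting homomorphism $\beta$ on cohomology and dealing with the exceptional degree $i=-1$, where $\beta$ is not surjective for a general coefficient module but is an isomorphism here since the relevant Tate twist is odd.
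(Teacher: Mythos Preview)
Your proof is correct and follows essentially the same route as the paper. The paper first reduces via periodicity of Tate cohomology to $\mathrm{Fib}\big(R\Gamma(G_{\br},\bz(m))\to\tau^{>m}R\widehat{\Gamma}(G_{\br},\bz(m))\big)[-e]$ with $m=n-e$, and then invokes a separate lemma (Lemma~\ref{lem-simplify}) identifying this fibre as $\tau^{\leq m}R\Gamma$ for $m\geq 0$ and $\tau_{\leq -m-2}C_*$ for $m<0$; that lemma is proved via a $3\times 3$ diagram whose content is precisely your octahedral argument and the identification $(\tau^{\leq m}\widehat{C})[-1]\simeq \tau^{\leq m+1}C_*$, which is your statement that $\phi$ is an equivalence. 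The only cosmetic difference is that the paper disposes of $m=-1$ by observing both sides vanish, whereas you fold it into the general argument by checking $\beta\colon\widehat H^{-1}(G_\br,\bz(-1))\to H_0(G_\br,\bz(-1))$ is an isomorphism; both are fine. Your worry about making the periodicity step rigorous is not an issue here: the paper simply asserts $\tau^{>n}R\widehat{\Gamma}(G_\br,\bz(n))\simeq(\tau^{>n-e}R\widehat{\Gamma}(G_\br,\bz(n-e)))[-e]$ and that the resulting map is the canonical comparison, exactly as you do.
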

\begin{proof}
We have
$$\tau^{> n}R\widehat{\Gamma}(G_\br, \bz(n))\simeq (\tau^{> n-e}R\widehat{\Gamma}(G_\br, \bz(n-e)))[-e]$$
and an equivalence
$$ \iota_x^*R\iota^!\bz(n)\simeq \mathrm{Fib}\left(R\Gamma(G_{\br},\bz(n-e))\rightarrow\tau^{> n-e}R\widehat{\Gamma}(G_\br, \bz(n-e))\right) [-e].$$
Hence the result follows from Lemma \ref{lem-simplify} below.
\end{proof}

\begin{lem}\label{lem-simplify}
For any $m\geq 0$, we have an equivalence
$$\tau^{\leq m}R\Gamma(G_{\br},\bz(m))\simeq \mathrm{Fib} \left(R\Gamma(G_{\br},\bz(m))\rightarrow \tau^{>m}R\widehat{\Gamma}(G_\br,\bz(m))\right).$$
Similarly, for any $m< 0$, we have
$$\tau_{\leq -m-2}C_*(G_{\br},\bz(m))\simeq \mathrm{Fib} \left(R\Gamma(G_{\br},\bz(m))\rightarrow \tau^{>m}R\widehat{\Gamma}(G_\br,\bz(m))\right).$$
\end{lem}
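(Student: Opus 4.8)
The plan is to treat the two cases $m\geq 0$ and $m<0$ separately, in both cases using the norm fibre sequence $C_*(G_\br,\bz(m))\to R\Gamma(G_\br,\bz(m))\to R\widehat{\Gamma}(G_\br,\bz(m))$ recalled in the notations, together with the standard comparison between cohomology and Tate cohomology of $G_\br=\bz/2$: the map $H^i(G_\br,\bz(m))\to\widehat{H}^i(G_\br,\bz(m))$ is an isomorphism for $i\geq 1$, and the boundary map of the norm fibre sequence induces isomorphisms $\widehat{H}^{-i}(G_\br,\bz(m))\xrightarrow{\sim}H_{i-1}(G_\br,\bz(m))$ for $i\geq 2$. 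Since the maps $R\Gamma(G_\br,\bz(m))\to R\widehat{\Gamma}(G_\br,\bz(m))$ and $R\widehat{\Gamma}(G_\br,\bz(m))\to C_*(G_\br,\bz(m))[1]$ realising these are honest morphisms of complexes, the arguments below take place in the derived category with no extra input.

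For $m\geq 0$, because $m+1\geq 1$ the morphism $R\Gamma(G_\br,\bz(m))\to R\widehat{\Gamma}(G_\br,\bz(m))$ restricts to an equivalence $\tau^{>m}R\Gamma(G_\br,\bz(m))\xrightarrow{\sim}\tau^{>m}R\widehat{\Gamma}(G_\br,\bz(m))$. By naturality of canonical truncation the composite in the Lemma thus factors as $R\Gamma(G_\br,\bz(m))\to\tau^{>m}R\Gamma(G_\br,\bz(m))\xrightarrow{\sim}\tau^{>m}R\widehat{\Gamma}(G_\br,\bz(m))$, and its fibre equals $\mathrm{Fib}\big(R\Gamma(G_\br,\bz(m))\to\tau^{>m}R\Gamma(G_\br,\bz(m))\big)=\tau^{\leq m}R\Gamma(G_\br,\bz(m))$, as asserted.

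For $m\leq -2$, apply the octahedral axiom to $R\Gamma(G_\br,\bz(m))\to R\widehat{\Gamma}(G_\br,\bz(m))\to\tau^{>m}R\widehat{\Gamma}(G_\br,\bz(m))$. The fibre of the first arrow is $C_*(G_\br,\bz(m))$ by the norm fibre sequence, and the fibre of the truncation is $\tau^{\leq m}R\widehat{\Gamma}(G_\br,\bz(m))$, so we get an exact triangle
$$C_*(G_\br,\bz(m))\to\mathrm{Fib}\!\left(R\Gamma(G_\br,\bz(m))\to\tau^{>m}R\widehat{\Gamma}(G_\br,\bz(m))\right)\to\tau^{\leq m}R\widehat{\Gamma}(G_\br,\bz(m))$$
whose connecting map to $C_*(G_\br,\bz(m))[1]$ is the composite of the inclusion $\tau^{\leq m}R\widehat{\Gamma}\hookrightarrow R\widehat{\Gamma}$ with the norm boundary. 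Since $m\leq -2$, the norm boundary is an isomorphism on cohomology in all degrees $\leq m$, hence identifies $\tau^{\leq m}R\widehat{\Gamma}(G_\br,\bz(m))$ with $\tau^{\leq m}\big(C_*(G_\br,\bz(m))[1]\big)$ and the connecting map with the canonical inclusion $\tau^{\leq m}\big(C_*(G_\br,\bz(m))[1]\big)\hookrightarrow C_*(G_\br,\bz(m))[1]$. Therefore the fibre is $\tau^{>m}\big(C_*(G_\br,\bz(m))[1]\big)[-1]$, and as $C_*(G_\br,\bz(m))$ has homology only in non-negative degrees this is exactly $\tau_{\leq -m-2}C_*(G_\br,\bz(m))$.

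Finally $m=-1$ is a genuine edge case: here $\tau^{>-1}R\widehat{\Gamma}=\tau^{\geq 0}R\widehat{\Gamma}$ and $\tau_{\leq -1}C_*(G_\br,\bz(-1))=0$, so one must check the fibre vanishes. Running the same octahedron, the fibre reduces to the cokernel, placed in degree $0$, of the inclusion $\widehat{H}^{-1}(G_\br,\bz(-1))=\ker\!\big(N:\bz(-1)_{G_\br}\to\bz(-1)^{G_\br}\big)\hookrightarrow H_0(G_\br,\bz(-1))$; since $G_\br$ acts by $-1$ on $\bz(-1)=(2i\pi)^{-1}\bz$, the norm $N=1+\sigma$ is zero, so this inclusion is an isomorphism and the fibre vanishes. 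I expect the only real work to be bookkeeping of the canonical truncations and of the shift $[\pm 1]$, and making sure each comparison is an equivalence of complexes rather than merely an isomorphism on cohomology groups — which is automatic here because the maps involved are genuine morphisms of complexes whose behaviour on cohomology in the relevant range of degrees is pinned down by the two comparison facts above. As a consistency check one may also observe that over $\bz[G_\br]=\bz[\bz/2]$ all the complexes in sight are formal, being direct sums of shifted copies of $\bz$ in degree $0$ and of $\bz/2$, which makes the degreewise matching visibly an equivalence.
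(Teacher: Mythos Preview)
Your proof is correct and follows essentially the same strategy as the paper. Both arguments use the octahedral axiom on the composite $R\Gamma(G_\br,\bz(m))\to R\widehat{\Gamma}(G_\br,\bz(m))\to\tau^{>m}R\widehat{\Gamma}(G_\br,\bz(m))$ for $m<0$ (the paper draws this as a $3\times 3$ diagram rather than naming the axiom), then identify $(\tau^{\leq m}\widehat{C})[-1]\simeq\tau^{\leq m+1}C_*$ via the norm boundary to conclude $F\simeq\tau^{>m+1}C_*=\tau_{\leq -m-2}C_*$; your treatment of $m=-1$ is more explicit than the paper's one-line assertion that both sides vanish, and your $m\geq 0$ case matches the paper's ``obvious''.
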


\begin{proof}
The first assertion is obvious. The second equivalence holds for $m=-1$ since both side vanish. It remains to show that the second equivalence holds for $m\leq-2$. We have the following exact diagram
\[ \xymatrix{
(\tau^{\leq m}\widehat{C}(G_\br, \bz(m)))[-1]\ar[d]\ar[r]^{} &0\ar[d]^{} \ar[r]^{}&\tau^{\leq m}\widehat{C}^*(G_{\br},\bz(m))\ar[d] \\
C_*(G_{\br},\bz(m))\ar[r]^{}\ar[d]&C^*(G_{\br},\bz(m)) \ar[r]^{}\ar[d]& \widehat{C}^*(G_{\br},\bz(m)) \ar[d]\\
F\ar[r]^{}&C^*(G_{\br},\bz(m)) \ar[r]^{}& \tau^{>m}\widehat{C}^*(G_{\br},\bz(m)) 
}
\]
hence a cofiber sequence
$$(\tau^{\leq m}\widehat{C}(G_\br, \bz(m)))[-1]\rightarrow  C_*(G_{\br},\bz(m)) \rightarrow F.$$
In view of the equivalences
$$(\tau^{\leq m}\widehat{C}(G_\br, \bz(m)))[-1]\simeq \tau^{\leq m+1}(\widehat{C}(G_\br, \bz(m))[-1])\simeq \tau^{\leq m+1}C_*(G_\br, \bz(m))$$
we obtain
$$F\simeq \tau^{> m+1}C_*(G_\br, \bz(m))= \tau^{\geq m+2}C_*(G_\br, \bz(m))= \tau_{\leq -m-2}C_*(G_\br, \bz(m)).$$
\end{proof}

\begin{lem}\label{lem-simplify2}
For any $m> 0$, we have an equivalence
$$\tau^{< m}R\Gamma(G_{\br},\bz(m))\simeq \mathrm{Fib} \left(R\Gamma(G_{\br},\bz(m))\rightarrow \tau^{\geq m }R\widehat{\Gamma}(G_\br,\bz(m))\right).$$
Similarly, for any $m\leq 0$, we have
$$\tau_{\leq -m}C_*(G_{\br},\bz(m))\simeq \mathrm{Fib} \left(R\Gamma(G_{\br},\bz(m))\rightarrow \tau^{\geq m}R\widehat{\Gamma}(G_\br,\bz(m))\right).$$
\end{lem}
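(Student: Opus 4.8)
The plan is to treat the two assertions separately, mimicking the proof of Lemma~\ref{lem-simplify} with $\tau^{>m}$ replaced throughout by $\tau^{\geq m}=\tau^{>m-1}$; the only extra input needed is that, for $G_\br\cong\bz/2$, the Tate cohomology $\widehat H^j(G_\br,\bz(m))$ equals $\bz/2$ when $j\equiv m\pmod 2$ and vanishes otherwise. For $m>0$ I would argue as in the ``obvious'' part of Lemma~\ref{lem-simplify}: the canonical map $R\Gamma(G_\br,\bz(m))\to R\widehat\Gamma(G_\br,\bz(m))$ is an isomorphism on $H^i$ for every $i\geq 1$, hence, since $m\geq 1$, it induces an equivalence $\tau^{\geq m}R\Gamma(G_\br,\bz(m))\xrightarrow{\sim}\tau^{\geq m}R\widehat\Gamma(G_\br,\bz(m))$. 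By functoriality of truncation the canonical map $R\Gamma(G_\br,\bz(m))\to\tau^{\geq m}R\widehat\Gamma(G_\br,\bz(m))$ factors as $R\Gamma(G_\br,\bz(m))\to\tau^{\geq m}R\Gamma(G_\br,\bz(m))\xrightarrow{\sim}\tau^{\geq m}R\widehat\Gamma(G_\br,\bz(m))$, so its fibre equals the fibre of the truncation map, namely $\tau^{\leq m-1}R\Gamma(G_\br,\bz(m))=\tau^{<m}R\Gamma(G_\br,\bz(m))$.

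For $m\leq 0$ I would reproduce the $3\times 3$ diagram used to prove Lemma~\ref{lem-simplify}, now assembled from the fibre sequence $C_*(G_\br,\bz(m))\to C^*(G_\br,\bz(m))\to\widehat C^*(G_\br,\bz(m))$ as middle row, the truncation fibre sequence $\tau^{\leq m-1}\widehat C^*\to\widehat C^*\to\tau^{\geq m}\widehat C^*$ as right column, and $0\to C^*\xrightarrow{\mathrm{id}}C^*$ as middle column. Its left column is then a fibre sequence
\[
\bigl(\tau^{\leq m-1}\widehat C^*(G_\br,\bz(m))\bigr)[-1]\longrightarrow C_*(G_\br,\bz(m))\longrightarrow F,
\]
where $F:=\mathrm{Fib}\bigl(R\Gamma(G_\br,\bz(m))\to\tau^{\geq m}R\widehat\Gamma(G_\br,\bz(m))\bigr)$ is the complex to be identified, and in which the first map is the composite of the truncation map $(\tau^{\leq m-1}\widehat C^*)[-1]\to\widehat C^*[-1]$ with the canonical map $\widehat C^*(G_\br,\bz(m))[-1]\to C_*(G_\br,\bz(m))$ obtained by rotating the middle row (this is the octahedral identification of the connecting map, exactly as tacitly used in Lemma~\ref{lem-simplify}).

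It then remains to identify $\bigl(\tau^{\leq m-1}\widehat C^*(G_\br,\bz(m))\bigr)[-1]$ with $\tau^{\leq m-1}C_*(G_\br,\bz(m))$ compatibly with the maps to $C_*(G_\br,\bz(m))$. First, $(\tau^{\leq m-1}\widehat C^*)[-1]=\tau^{\leq m}(\widehat C^*[-1])=\tau^{\leq m-1}(\widehat C^*[-1])$, the last equality because $H^m(\widehat C^*(G_\br,\bz(m))[-1])=\widehat H^{m-1}(G_\br,\bz(m))=0$, as $m-1\not\equiv m\pmod 2$. Second, the canonical map $\widehat C^*(G_\br,\bz(m))[-1]\to C_*(G_\br,\bz(m))$ is an isomorphism on $H^i$ for all $i\leq -1$: for such $i$ one has $H^{i-1}(C^*)=H^i(C^*)=0$, so the long exact sequence of the middle row degenerates to $0\to\widehat H^{i-1}(G_\br,\bz(m))\to H^i(C_*)\to 0$. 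Since $m-1\leq -1$, applying $\tau^{\leq m-1}$ to this map yields an equivalence $\tau^{\leq m-1}(\widehat C^*[-1])\xrightarrow{\sim}\tau^{\leq m-1}C_*(G_\br,\bz(m))$ carrying the first map of the left column onto the canonical inclusion $\tau^{\leq m-1}C_*\hookrightarrow C_*$. Therefore
\[
F\simeq\mathrm{cofib}\bigl(\tau^{\leq m-1}C_*(G_\br,\bz(m))\hookrightarrow C_*(G_\br,\bz(m))\bigr)=\tau^{\geq m}C_*(G_\br,\bz(m))=\tau_{\leq -m}C_*(G_\br,\bz(m)),
\]
the last identity being the identification of $\tau^{\geq m}$ with $\tau_{\leq -m}$ on the complex $C_*(G_\br,\bz(m))$, which is concentrated in non-negative homological degrees.

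The main obstacle is organizational, not conceptual. One must justify that the connecting map in the left column of the $3\times 3$ diagram is literally the canonical map $\widehat C^*[-1]\to C_*$ precomposed with a truncation — the naturality of boundary maps in a $3\times 3$ diagram of fibre sequences (equivalently, the octahedral axiom for $C^*\to\widehat C^*\to\tau^{\geq m}\widehat C^*$), used in exactly the same tacit way in the proof of Lemma~\ref{lem-simplify} — and one must keep the parities of $\widehat H^\bullet(G_\br,\bz(m))$ straight, since it is precisely the vanishing of $\widehat H^{m-1}(G_\br,\bz(m))$ that makes the truncation index land on $\tau_{\leq -m}$ rather than on $\tau_{\leq -m-1}$. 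Everything else is a routine diagram chase.
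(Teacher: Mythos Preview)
Your argument is correct and follows essentially the same route as the paper: the $m>0$ case is the same ``obvious'' truncation identification, and for $m\leq 0$ you reproduce the paper's $3\times 3$ diagram and the identification $(\tau^{<m}\widehat C^*)[-1]\simeq\tau^{<m+1}C_*$. The only differences are cosmetic: the paper treats $m=0$ by a separate one-line direct argument and then runs the diagram only for $m\leq -1$, whereas your version handles all $m\leq 0$ uniformly; and you invoke the vanishing $\widehat H^{m-1}(G_\br,\bz(m))=0$ one step earlier (to pass from $\tau^{\leq m}$ to $\tau^{\leq m-1}$ on $\widehat C^*[-1]$) while the paper invokes it at the end (to pass from $\tau_{\leq -m-1}C_*$ to $\tau_{\leq -m}C_*$).
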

\begin{proof}
The first assertion is obvious. The second equivalence for $m=0$ follows from the exact sequence
$$0=\widehat{H}^{-1}(G_{\br},\bz)\rightarrow H_{0}(G_{\br},\bz)\rightarrow H^{0}(G_{\br},\bz)\rightarrow \widehat{H}^{0}(G_{\br},\bz)\rightarrow 0$$
and the isomorphism $H^{i}(G_{\br},\bz)\stackrel{\sim}{\rightarrow} \widehat{H}^{i}(G_{\br},\bz)$ for $i>0$.

It remains to show that the second equivalence holds for $m\leq-1$. We have the following exact diagram
\[ \xymatrix{
(\tau^{< m}\widehat{C}(G_\br, \bz(m)))[-1]\ar[d]\ar[r]^{} &0\ar[d]^{} \ar[r]^{}&\tau^{< m}\widehat{C}^*(G_{\br},\bz(m))\ar[d] \\
C_*(G_{\br},\bz(m))\ar[r]^{}\ar[d]&C^*(G_{\br},\bz(m)) \ar[r]^{}\ar[d]& \widehat{C}^*(G_{\br},\bz(m)) \ar[d]\\
F\ar[r]^{}&C^*(G_{\br},\bz(m)) \ar[r]^{}& \tau^{\geq m}\widehat{C}^*(G_{\br},\bz(m)) 
}
\]
hence a cofiber sequence
$$(\tau^{< m}\widehat{C}(G_\br, \bz(m)))[-1]\rightarrow  C_*(G_{\br},\bz(m)) \rightarrow F.$$
In view of the equivalences
$$(\tau^{< m}\widehat{C}(G_\br, \bz(m)))[-1]\simeq \tau^{< m+1}(\widehat{C}(G_\br, \bz(m))[-1])\simeq \tau^{< m+1}C_*(G_\br, \bz(m))$$
we obtain
$$F\simeq \tau^{\geq m+1}C_*(G_\br, \bz(m))= \tau_{\leq -m-1}C_*(G_\br, \bz(m))\simeq \tau_{\leq -m}C_*(G_\br, \bz(m))$$
since
$$H_{-m}(G_\br, \bz(m))=\widehat{H}^{m-1}(G_\br, \bz(m))=0.$$
\end{proof}

\subsubsection{The complex $R\iota^!R\underline{\mathrm{Hom}}(\bz'(e-n), \D_{\X_{\infty}}[-2e])$}
We  denote by $f:\X(\br)\rightarrow \{*\}$ the map from $\X(\br)$ to the point and
we denote by $\omega_{\X(\br)}$ the orientation sheaf on the $e$-manifold $\X(\br)$. We have 
$$\mathcal{D}_{\X(\br)}:=f^!\bz\simeq \omega_{\X(\br)}[e].$$

\begin{prop}\label{iota2}
For $e-n>0$ we have
$$R\iota^!R\underline{\mathrm{Hom}}(\bz'(e-n), \D_{\X_{\infty}}[-2e])\simeq f^*(\tau_{\leq e-n-2}C_*(G_{\br},\bz(e-n)))\otimes^L\omega_{\X(\br)}[-e].$$
\end{prop}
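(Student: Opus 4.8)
The plan is to reduce the computation to Verdier duality for the closed immersion $\iota:\X(\br)\hookrightarrow\X_{\infty}$ together with the stalk computation already underlying Lemma \ref{lem-simplify2}; throughout set $m:=e-n>0$.

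First I would invoke the standard identity $R\iota^!\,R\underline{\mathrm{Hom}}(\mathcal{A},\mathcal{B})\simeq R\underline{\mathrm{Hom}}_{\X(\br)}(\iota^*\mathcal{A},R\iota^!\mathcal{B})$ for a closed immersion, applied with $\mathcal{A}=\bz'(e-n)$ and $\mathcal{B}=\D_{\X_{\infty}}[-2e]$. Since dualizing complexes are compatible with $R\iota^!$ (the structure map $f:\X(\br)\to\{*\}$ factors through $\iota$), one has $R\iota^!\D_{\X_{\infty}}\simeq\D_{\X(\br)}\simeq\omega_{\X(\br)}[e]$, so the object to be computed becomes
\[ R\underline{\mathrm{Hom}}_{\X(\br)}\big(\iota^*\bz'(e-n),\,\omega_{\X(\br)}[-e]\big)\simeq R\underline{\mathrm{Hom}}_{\X(\br)}\big(\iota^*\bz'(e-n),\,\bz\big)\otimes^L\omega_{\X(\br)}[-e], \]
using that $\omega_{\X(\br)}$ is invertible. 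Observe that the orientation sheaf enters only at this point, through the dualizing complex.

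Next I would compute $\iota^*\bz'(e-n)$. Arguing as in the proofs of the preceding lemmas, a neighbourhood of a real point $x$ is $G_{\br}$-equivariantly modelled on $B^e\times B^e$ with $\sigma$ acting trivially on the first factor and by $-1$ on the second, and is $G_{\br}$-equivariantly contractible onto $x$; hence the stalks along $\X(\br)$ of $R\pi_*\bz(m)$ and $R\widehat{\pi}_*\bz(m)$ are $R\Gamma(G_{\br},\bz(m))$ and $R\widehat{\Gamma}(G_{\br},\bz(m))$. The coefficient sheaf $(2i\pi)^m\bz$ carries no monodromy along $\X(\br)$, so these restrictions are the constant complexes $f^*R\Gamma(G_{\br},\bz(m))$ and $f^*R\widehat{\Gamma}(G_{\br},\bz(m))$; since $f^*$ is exact,
\[ \iota^*\bz'(e-n)\simeq f^*\,\mathrm{Fib}\big(R\Gamma(G_{\br},\bz(m))\to\tau^{\geq m}R\widehat{\Gamma}(G_{\br},\bz(m))\big)\simeq f^*\big(\tau^{<m}R\Gamma(G_{\br},\bz(m))\big), \]
the last equivalence being Lemma \ref{lem-simplify2} (applicable because $m>0$).

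Putting the two steps together, and using $R\underline{\mathrm{Hom}}_{\X(\br)}(f^*K,\bz)\simeq f^*R\mathrm{Hom}_{\bz}(K,\bz)$ for $K\in D^b(\bz)$ with finitely generated cohomology, the proposition reduces to the algebraic identity
\[ R\mathrm{Hom}_{\bz}\big(\tau^{<m}R\Gamma(G_{\br},\bz(m)),\bz\big)\simeq\tau_{\leq m-2}C_*(G_{\br},\bz(m)). \]
Because $\bz$ is hereditary, every object of $D^b(\bz)$ splits (non-canonically) as the sum of its cohomology groups, so it suffices to match cohomology; this follows at once from the standard values of $H^*(G_{\br},\bz(m))$ and $H_*(G_{\br},\bz(m))$, treating $m$ even and $m$ odd separately. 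Substituting $m=e-n$ yields the stated formula. The step I expect to require the most care is the identification of $\iota^*\bz'(e-n)$: one must verify that $\iota^*R\pi_*\bz(m)$ and $\iota^*R\widehat{\pi}_*\bz(m)$ are \emph{globally} constant on $\X(\br)$ rather than merely locally constant, so that $\iota^*\bz'(e-n)$ is the untwisted complex $f^*(\tau^{<m}R\Gamma(G_{\br},\bz(m)))$ and the orientation twist $\omega_{\X(\br)}$ appears precisely once; one should also check that the six-functor operations used ($R\iota^!$, $R\underline{\mathrm{Hom}}$, dualizing complexes) are available on the locally compact space $\X_{\infty}$, which fails to be a manifold along $\X(\br)$.
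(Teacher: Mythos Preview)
Your approach is correct and essentially the same as the paper's. The paper applies the identity $R\iota^!R\underline{\mathrm{Hom}}(\mathcal A,\D_{\X_\infty})\simeq R\underline{\mathrm{Hom}}(\iota^*\mathcal A,\D_{\X(\br)})$, identifies $\iota^*\bz'(e-n)\simeq f^*\tau^{<e-n}R\Gamma(G_\br,\bz(e-n))$ via Lemma~\ref{lem-simplify2}, then uses $R\underline{\mathrm{Hom}}(f^*K,\D_{\X(\br)})\simeq f^!R\mathrm{Hom}(K,\bz)$ and finally $f^!\simeq f^*(-)\otimes^L\omega_{\X(\br)}[e]$; the only difference from your outline is that you extract the orientation twist at the beginning (by invertibility of $\omega_{\X(\br)}$) rather than at the end via $f^!$, and that you verify the algebraic duality $R\mathrm{Hom}(\tau^{<m}C^*(G_\br,\bz(m)),\bz)\simeq\tau_{\leq m-2}C_*(G_\br,\bz(m))$ by hand, whereas the paper passes through $\tau^{<m}=\tau^{\leq m-2}$ (using $H^{m-1}(G_\br,\bz(m))=0$) and then invokes Lemma~\ref{dualitybasic}.
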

\begin{proof}
Using Lemma \ref{lem-simplify2}  and Lemma \ref{dualitybasic}, we obtain
\begin{eqnarray*}
R\iota^!R\underline{\mathrm{Hom}}(\bz'(e-n), \D_{\X_{\infty}})[-2e]&\simeq& R\underline{\mathrm{Hom}}(\iota^*\bz'(e-n), \D_{\X(\br)})[-2e]\\
&\simeq& R\underline{\mathrm{Hom}}(f^*\tau^{<e-n}R\Gamma(G_{\br},\bz(e-n)), \D_{\X(\br)})[-2e]\\
&\simeq& f^!R\mathrm{Hom}(\tau^{<e-n}R\Gamma(G_{\br},\bz(e-n)), \bz)[-2e]\\
&\simeq& f^!R\underline{\mathrm{Hom}}(\tau^{\leq e-n-2}R\Gamma(G_{\br},\bz(e-n)), \bz)[-2e]\\
&\simeq& f^!(\tau_{\leq e-n -2}C_*(G_{\br},\bz(e-n)))[-2e]\\
&\simeq& f^*(\tau_{\leq e-n-2}C_*(G_{\br},\bz(e-n)))\otimes^L\omega_{\X(\br)}[-e].
\end{eqnarray*}
\end{proof}

\begin{prop}\label{iota3}
For  $e-n\leq 0$ we have
$$R\iota^!R\underline{\mathrm{Hom}}(\bz'(e-n),\D_{\X_{\infty}}[-2e])\simeq f^*(\tau^{\leq n-e}R\Gamma(G_{\br},\bz(n-e)))\otimes^L\omega_{\X(\br)}[-e].$$
\end{prop}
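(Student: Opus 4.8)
The plan is to run through the proof of Proposition \ref{iota2} step by step, everywhere replacing an appeal to the first (``$m>0$'') case of Lemmas \ref{lem-simplify} and \ref{lem-simplify2} by the corresponding second (``$m\leq 0$'') case; this is legitimate since under the hypothesis $e-n\leq 0$ we are now feeding $m:=e-n$ into the negative range of those lemmas, so the proposition is the ``negative twist'' counterpart of Proposition \ref{iota2}. Concretely, I would first combine Lemma \ref{dualitybasic} with $\D_{\X(\br)}\simeq\omega_{\X(\br)}[e]$ to obtain
$$R\iota^!R\underline{\mathrm{Hom}}(\bz'(e-n),\D_{\X_{\infty}})[-2e]\simeq R\underline{\mathrm{Hom}}(\iota^*\bz'(e-n),\D_{\X(\br)})[-2e].$$

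Next, exactly as in the proof of Proposition \ref{iota2}, the restriction $\iota^*\bz'(e-n)$ is $f^*$ of $\mathrm{Fib}\bigl(R\Gamma(G_{\br},\bz(e-n))\to\tau^{\geq e-n}R\widehat{\Gamma}(G_{\br},\bz(e-n))\bigr)$; since now $e-n\leq 0$, the second equivalence of Lemma \ref{lem-simplify2} identifies this fiber with $\tau_{\leq n-e}C_*(G_{\br},\bz(e-n))$. Moving the internal $\mathrm{Hom}$ across $f^*$ via $R\underline{\mathrm{Hom}}(f^*A,f^!\bz)\simeq f^!R\mathrm{Hom}(A,\bz)$ then rewrites the right-hand side as
$$f^!R\mathrm{Hom}\bigl(\tau_{\leq n-e}C_*(G_{\br},\bz(e-n)),\bz\bigr)[-2e].$$

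The only step deserving comment is the identification
$$R\mathrm{Hom}\bigl(\tau_{\leq n-e}C_*(G_{\br},\bz(e-n)),\bz\bigr)\simeq\tau^{\leq n-e}R\Gamma(G_{\br},\bz(n-e)),$$
which is the same homology-cohomology duality for $G_{\br}$ that yields the passage from $R\mathrm{Hom}(\tau^{\leq e-n-2}R\Gamma(G_{\br},\bz(e-n)),\bz)$ to $\tau_{\leq e-n-2}C_*(G_{\br},\bz(e-n))$ in the proof of Proposition \ref{iota2}, now applied in the opposite direction: $\tau^{\leq n-e}R\Gamma(G_{\br},\bz(n-e))$ is a perfect complex, $R\mathrm{Hom}(-,\bz)$ turns $\bz(e-n)$-coefficients into $\bz(e-n)^\vee\cong\bz(n-e)$-coefficients, and one concludes by reflexivity of perfect complexes. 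The accompanying degree bookkeeping (namely the vanishing $H_{n-e}(G_{\br},\bz(e-n))=0$ for $n>e$, which makes the two truncated complexes match up under $R\mathrm{Hom}(-,\bz)$) is the analogue of the identity $\tau^{<e-n}R\Gamma(G_{\br},\bz(e-n))=\tau^{\leq e-n-2}R\Gamma(G_{\br},\bz(e-n))$ used in Proposition \ref{iota2}. Finally $f^!(A)[-2e]\simeq f^*(A)\otimes^L\omega_{\X(\br)}[-e]$ gives the asserted formula.

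Because everything runs parallel to Proposition \ref{iota2}, there is no genuine obstacle; the only point needing attention is keeping the parities and truncation indices straight in the homology-cohomology duality for $G_{\br}$, and that bookkeeping has already been carried out in the positive-twist case.
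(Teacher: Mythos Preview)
Your proposal is correct and follows essentially the same route as the paper: identify $\iota^*\bz'(e-n)$ via the second case of Lemma \ref{lem-simplify2}, pass through $f^!$, apply Lemma \ref{dualitybasic} (with index $n-e\geq 0$) to dualize $\tau_{\leq n-e}C_*(G_{\br},\bz(e-n))$ into $\tau^{\leq n-e}R\Gamma(G_{\br},\bz(n-e))$, and finish with $f^!\simeq f^*(-)\otimes\omega_{\X(\br)}[e]$. One small slip: your first displayed isomorphism is the Verdier adjunction $R\iota^!R\underline{\mathrm{Hom}}(-,\D_{\X_\infty})\simeq R\underline{\mathrm{Hom}}(\iota^*-,\D_{\X(\br)})$ and has nothing to do with Lemma \ref{dualitybasic}; that lemma only enters at the duality step you single out later.
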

\begin{proof}
Using Lemma \ref{lem-simplify2} and Lemma \ref{dualitybasic}, we obtain
\begin{eqnarray*}
R\iota^!R\underline{\mathrm{Hom}}(\bz'(e-n), \D_{\X_{\infty}})[-2e]&\simeq& R\underline{\mathrm{Hom}}(\iota^*\bz'(e-n), \D_{\X(\br)})[-2e]\\
&\simeq& R\underline{\mathrm{Hom}}(f^*\tau_{\leq n-e}C_*(G_{\br},\bz(e-n)), \D_{\X(\br)})[-2e]\\
&\simeq& f^!R\mathrm{Hom}(\tau_{\leq n-e}C_*(G_{\br},\bz(e-n)), \bz)[-2e]\\
&\simeq & f^!(\tau^{\leq n-e}R\Gamma(G_{\br},\bz(n-e)))[-2e]\\
&\simeq& f^* (\tau^{\leq n-e}R\Gamma(G_{\br},\bz(n-e)))\otimes^L\omega_{\X(\br)}[-e].
\end{eqnarray*}
\end{proof}

\begin{lem}\label{dualitybasic}
For any $n\in \bz$, the pairing
$$C_*(G_{\br},\bz(-n))\otimes_{\bz}^L C^*(G_{\br},\bz(n)) \rightarrow C_*(G_{\br},\bz(0))\rightarrow \bz[0]$$
induces a perfect pairing
$$\tau_{\leq n}C_*(G_{\br},\bz(-n))\otimes_{\bz}^L \tau^{\leq n}C^*(G_{\br},\bz(n)) \rightarrow \bz[0]$$
of perfect complexes of abelian groups.
\end{lem}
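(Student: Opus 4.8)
The plan is to reduce the statement to an explicit and finite computation with the standard (periodic) resolution of $\bz$ over $\bz[G_\br]$, $G_\br=\{1,\sigma\}$, and then observe that the claimed truncated pairing is nothing but the tautological perfect pairing between a finitely generated complex of abelian groups and its $\bz$-linear dual in the appropriate range. First I would recall the explicit shape of the relevant cohomologies: for the trivial action twist $\bz(n)$ with $n$ even one has $H^0(G_\br,\bz(n))=\bz$, $H^i(G_\br,\bz(n))=\bz/2$ for $i>0$ even and $0$ for $i$ odd, while for $n$ odd $H^i(G_\br,\bz(n))=\bz/2$ for $i>0$ odd and $0$ otherwise (and $H^0=0$); and dually $H_0(G_\br,\bz(n))=\bz$ or $\bz/2$ in the same parity pattern, with the higher homology $H_i(G_\br,\bz(n))=\widehat H^{i-1}(G_\br,\bz(n))$ for $i\ge 1$ being $2$-torsion governed by Tate cohomology. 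The point of the twist by $\bz(-n)$ against $\bz(n)$ is exactly that $\bz(-n)\otimes\bz(n)\simeq\bz(0)=\bz$ with trivial action, so the product map on chains lands in $C_*(G_\br,\bz)$, and the augmentation $C_*(G_\br,\bz)\to\bz[0]$ provides the trace.

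The key step is to identify the pairing concretely. Work with the $2$-periodic free resolution $P_\bullet\to\bz$ of the trivial module, $P_i=\bz[G_\br]$ for $i\ge 0$, with differentials alternately $1-\sigma$ and $1+\sigma$. Then $C^*(G_\br,\bz(n))=\Hom_{G_\br}(P_\bullet,\bz(n))$ and $C_*(G_\br,\bz(-n))=\bz(-n)\otimes_{G_\br}P_\bullet$, and the pairing in the statement is induced, after the identification $\bz(-n)\otimes\bz(n)\cong\bz$, by the evaluation/contraction $\bz(n)\otimes_{G_\br}P_\bullet \ \otimes\ \Hom_{G_\br}(P_\bullet,\bz(-n)) \to \bz$ together with the Alexander–Whitney/shuffle diagonal on $P_\bullet$. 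Because $P_\bullet$ is finite free in each degree and the diagonal is the usual one, this is, degreewise, a perfect pairing of finitely generated $\bz[G_\br]$-modules; passing to (co)invariants one checks that on the level of the total complexes it exhibits $C_*(G_\br,\bz(-n))$ as $R\Hom_\bz(C^*(G_\br,\bz(n)),\bz)$ up to the non-perfect "tails'' that lie in degrees $>n$ (for $C^*$) and in the corresponding homological degrees (for $C_*$). Truncating by $\tau^{\le n}$ on the cohomological side and by $\tau_{\le n}$ on the homological side excises precisely these tails, and what remains is a complex that is perfect as a complex of abelian groups (its cohomology is finitely generated: a single copy of $\bz$ in degree $0$ when $n$ is even, or $0$ in degree $0$, plus finitely many copies of $\bz/2$) together with its honest $\bz$-linear dual. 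Hence the truncated pairing is perfect.

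The main obstacle will be bookkeeping at the two "boundary'' degrees $i=0$ and $i=n$: one must check that the truncations $\tau_{\le n}$ and $\tau^{\le n}$ are genuinely dual to one another there, i.e.\ that the potentially dangerous contributions $H^{n}$ vs.\ $H_{n}$ (which are $2$-torsion and paired via the cup-product into $H^n(G_\br,\bz(0))=\bz/2$, itself mapping to $0$ under the trace to $\bz[0]$) do not spoil perfectness after truncation — this is exactly why one truncates at $n$ and not at $n-1$ or $n+1$, and it is forced by the parity analysis above. Concretely I would treat the cases $n$ even and $n$ odd, and within each the subcases $n\ge 0$ and $n<0$ (where on the homological side the relevant range $\tau_{\le n}$ may be empty, consistent with $\tau^{\le n}C^*$ being $0$), verifying in each that the pairing matrix on the generating cocycles/cycles of the truncated complexes is unimodular over $\bz$ in degree $0$ and a perfect pairing $\bz/2\times\bz/2\to\bz/2\hookrightarrow\bq/\bz$ in the finitely many positive degrees. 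Once the boundary degrees are handled, perfectness in the interior is automatic from the perfectness of the degreewise pairing on $P_\bullet$. (Compare Lemma \ref{lem-simplify} and Lemma \ref{lem-simplify2}, which record exactly the truncation identities needed to recognize the fibres appearing in Propositions \ref{iota1}–\ref{iota3} as these truncated complexes.)
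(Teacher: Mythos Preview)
Your approach is workable but heavier than necessary, and a couple of points are muddled enough that they should be cleaned up before the argument goes through.

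The paper's proof is much shorter. It first notes that the \emph{untruncated} pairing already exhibits $C^*(G_\br,\bz(n))\simeq R\Hom_\bz(C_*(G_\br,\bz(-n)),\bz)$; this is tautological once you write both sides with the same finite free resolution $P_\bullet$. Given that, the lemma reduces to the question of whether $\tau^{\le n}R\Hom(C_*,\bz)\simeq R\Hom(\tau_{\le n}C_*,\bz)$, and by the universal coefficient sequence
\[
0\to\Ext^1(H_{i-1}(C_*),\bz)\to H^i(R\Hom(C_*,\bz))\to\Hom(H_i(C_*),\bz)\to 0
\]
the only possible discrepancy is an $\Ext^1(H_n(G_\br,\bz(-n)),\bz)$ in degree $n{+}1$. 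For $n>0$ one has $H_n(G_\br,\bz(-n))=0$ (by exactly the parity pattern you wrote down), and for $n\le 0$ the statement is trivial. That single vanishing is the entire proof.

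Two specific corrections to your write-up. First, your formula $H_i(G_\br,\bz(n))=\widehat H^{\,i-1}(G_\br,\bz(n))$ is wrong; the correct identification is $H_i=\widehat H^{-i-1}$ for $i\ge 0$. Second, your description of the boundary issue as ``$H^n$ vs.\ $H_n$ (which are $2$-torsion \ldots)'' and of the torsion case as ``a perfect pairing $\bz/2\times\bz/2\to\bz/2\hookrightarrow\bq/\bz$'' misidentifies what perfectness means here. The target of the pairing is $\bz[0]$; a perfect pairing means the adjoint map in the derived category is an equivalence, and on cohomology a $\bz/2$ in homological degree $i$ of one factor is matched with a $\bz/2$ in cohomological degree $i{+}1$ of the other via $\Ext^1(\bz/2,\bz)$, not via any $\bq/\bz$-valued pairing. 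In particular the actual situation at the boundary is not that $H^n$ and $H_n$ are both $\bz/2$ and pair to zero: rather $H_n(G_\br,\bz(-n))=0$, which is precisely why the truncation does not create an extra $\Ext^1$-class in degree $n{+}1$. Once you see it this way, your case-by-case plan collapses to that one observation.
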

\begin{proof} The result is trivial for $n<0$ and clear for $n=0$. So we assume $n>0$.
The pairing induces an equivalence
$$C^*(G_{\br},\bz(n))\rightarrow R\mathrm{Hom}(C_*(G_{\br},\bz(-n)),\bz)$$
hence it is enough to observe that
$$\tau^{\leq n} R\mathrm{Hom}(C_*(G_{\br},\bz(-n)),\bz)\simeq R\mathrm{Hom}(\tau_{\leq n}C_*(G_{\br},\bz(-n)),\bz).$$
For any cohomological complex $A^*$, we have a short exact sequence
$$0\rightarrow \mathrm{Ext}(H^{-i+1}(A^*),\bz)\rightarrow H^{i}(R\mathrm{Hom}(A^*,\bz))\rightarrow \mathrm{Hom}(H^{-i}(A^*),\bz)\rightarrow 0.$$
We obtain 
$$H^{i}(R\mathrm{Hom}(\tau_{\leq n}C_*(G_{\br},\bz(-n)),\bz))=H^{i}(R\mathrm{Hom}(C_*(G_{\br},\bz(-n)),\bz))$$
for $i\leq n$ and $i> n+1$. Since we have $H_n(G_{\br},\bz(-n))=0$ for any $n>0$, we get 
$$H^{n+1}(R\mathrm{Hom}(\tau_{\leq n}C_*(G_{\br},\bz(-n)),\bz))=0.$$

\end{proof}

\begin{rem}
For $n>0$, we have $H_n(G_{\br},\bz(-n))=0$ hence
 $$\tau_{\leq n}C_*(G_{\br},\bz(-n))\simeq \tau_{< n}C_*(G_{\br},\bz(-n)).$$
\end{rem}

\subsubsection{The map $R\iota^!\bz(n)\rightarrow R\iota^!R\underline{\mathrm{Hom}}(\bz'(e-n), \D_{\X_{\infty}}[-2e])$}

\begin{prop} \label{iota}
The map $$R\iota^!\bz(n)\rightarrow R\iota^!R\underline{\mathrm{Hom}}(\bz'(e-n), \D_{\X_{\infty}}[-2e])$$
is an equivalence.
\end{prop}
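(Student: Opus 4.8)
The plan is to reduce the claim to a statement on stalks at real points $x \in \X(\br)$, exactly as in the proof of Proposition \ref{j} the claim was reduced to a statement over $\X^\circ_\infty$. Since $\iota: \X(\br) \to \X_\infty$ is a closed immersion, the object $R\iota^!\bz(n)$ is a complex of sheaves on $\X(\br)$, and to check that the duality map is an equivalence it suffices to check it after applying $\iota_x^*$ for every $x \in \X(\br)$. So the first step is to observe that $\iota_x^* R\iota^! (-)$ commutes appropriately with the functors in sight (in particular with $R\underline{\mathrm{Hom}}(\bz'(e-n), -)$ once we know the source and target are constructible enough), and thereby rephrase the map we must check as a map
$$
\iota_x^* R\iota^!\bz(n) \longrightarrow \iota_x^* R\iota^! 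R\underline{\mathrm{Hom}}(\bz'(e-n), \D_{\X_\infty}[-2e]).
$$

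Next, I would plug in the explicit computations already obtained. Proposition \ref{iota1} identifies the left-hand side: for $n < e$ it is $(\tau_{\leq e-n-2} C_*(G_\br, \bz(n-e)))[-e]$, and for $n \geq e$ it is $(\tau^{\leq n-e} R\Gamma(G_\br, \bz(n-e)))[-e]$. Propositions \ref{iota2} and \ref{iota3} identify the right-hand side: for $e - n > 0$ it is $f^*(\tau_{\leq e-n-2} C_*(G_\br, \bz(e-n))) \otimes^L \omega_{\X(\br)}[-e]$, and for $e - n \leq 0$ it is $f^*(\tau^{\leq n-e} R\Gamma(G_\br, \bz(n-e))) \otimes^L \omega_{\X(\br)}[-e]$. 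Taking stalks at $x$ kills the (locally constant, rank-one) orientation factor up to a choice of local orientation, so in both ranges the target stalk becomes $R\mathrm{Hom}(\tau_{\leq *}C_*(G_\br, \bz(e-n)), \bz)[-e]$ or its analogue — which is precisely the Pontryagin-type dual appearing on the left, via Lemma \ref{dualitybasic}. The case $n < e$ matches $\tau_{\leq e-n-2}C_*(G_\br,\bz(n-e))$ against $\tau^{\leq e-n-2}$-truncated $G_\br$-cohomology of $\bz(e-n)$; the case $n \geq e$ uses the other half of Lemma \ref{dualitybasic}. So the core of the argument is to check that the duality map of Proposition \ref{productmap}, restricted to the stalk at $x$, induces exactly the perfect pairing of Lemma \ref{dualitybasic} (appropriately twisted by $\bz(n-e)$ vs. $\bz(e-n)$ and shifted by $[-e]$).

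The main obstacle I anticipate is \emph{compatibility of identifications}: the isomorphisms in Propositions \ref{iota1}, \ref{iota2}, \ref{iota3} were each produced somewhat independently (via the octahedral diagrams and via Verdier duality on $\X(\br)$), and one must verify that the map induced by (\ref{dualitymap}) on stalks is carried to the Lemma \ref{dualitybasic} pairing under these identifications, rather than to some other map between the same two objects. Concretely, one has to trace the product map $i^*_\infty\bz(n)\otimes^L\bz'(e-n)\to R\pi_*((2i\pi)^e\bz)\to \D_{\X_\infty}[-2e]$ through the local model $\X(\bc)\cap(B^e\times B^e)\simeq_{G_\br} \mathbf{S}^{e-1}$ used in the stalk computation, and see that the boundary class in $H^e(G_\br,\bz(e))\cong\bz/2\bz$ — the one forced to be nontrivial in the Lemma before Proposition \ref{iota1} — is exactly what makes the pairing perfect on the nose. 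Once that bookkeeping is done, perfectness on both stalk-pieces follows from Lemma \ref{dualitybasic}, and conservativity of $x \mapsto \iota_x^*$ on sheaves over $\X(\br)$ finishes the proof. A minor additional point is the boundary/degenerate cases ($e = 0$, or $n - e \in \{-1, 0\}$ where truncations collapse), which should be handled separately but are immediate from the vanishing statements already recorded.
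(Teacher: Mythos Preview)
Your proposal is correct and follows essentially the same route as the paper: reduce to stalks $\iota_x^*$ for $x\in\X(\br)$, invoke Proposition \ref{iota1} for the source and Propositions \ref{iota2}/\ref{iota3} for the target, and conclude by conservativity of the family $\{\iota_x^*\}$.

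Two minor remarks on presentation. First, Lemma \ref{dualitybasic} has already been absorbed into the proofs of Propositions \ref{iota2} and \ref{iota3}; after those propositions the target stalk is already written as $\tau_{\leq e-n-2}C_*(G_\br,\bz(e-n))[-e]$ (resp.\ $\tau^{\leq n-e}R\Gamma(G_\br,\bz(n-e))[-e]$), which coincides \emph{as a complex} with the source stalk from Proposition \ref{iota1} since $\bz(m)$ depends only on the parity of $m$. The paper therefore simply asserts that the duality map becomes the identity under these identifications, rather than reinvoking the pairing of Lemma \ref{dualitybasic}. Second, the compatibility concern you raise --- that the map induced by (\ref{dualitymap}) really is carried to the identity under these identifications, including the handling of the orientation line $\iota_x^*\omega_{\X(\br)}$ --- is legitimate, and the paper does not spell it out; it is implicit in the local model $B^e\times B^e$ used to compute both $\iota_x^*R\iota^!$ and the stalk of $\D_{\X(\br)}$. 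Your instinct to flag this as the one nontrivial bookkeeping step is sound.
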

\begin{proof}
For $e-n>0$ and any $x\in \X(\br)$, the map 
$$\iota_x^*R\iota^!\bz(n)\rightarrow \iota_x^*R\iota^!R\underline{\mathrm{Hom}}(\bz'(e-n),\D_{\X_{\infty}}[-2e])$$
can be identified with the identity
$$\tau_{\leq e-n-2}C_*(G_{\br},\bz(e-n))[-e]= \tau_{\leq e-n-2}C_*(G_{\br},\bz(e-n))[-e]$$
by Prop. \ref{iota1} and Prop. \ref{iota2}.

For $e-n\leq0$ and any $x\in \X(\br)$, the map 
$$\iota_x^*R\iota^!\bz(n)\rightarrow \iota_x^*R\iota^!R\underline{\mathrm{Hom}}(\bz'(e-n),\D_{\X_{\infty}}[-2e])$$
can be identified with the identity
$$\tau^{\leq n-e}R\Gamma(G_{\br},\bz(n-e))[-e]= \tau^{\leq n-e}R\Gamma(G_{\br},\bz(n-e))[-e]$$
by Prop. \ref{iota1} and Prop. \ref{iota3}.

The result follows since the family of functors $\{\iota_x^*,x\in\X(\br)\}$ is conservative.
\end{proof}

%\begin{rem}
%The proof of Prop. \ref{iota} is not entirely complete, since we  didn't prove the commutativity of the square
%\[ \xymatrix{
%\iota_x^*R\iota^!\bz(n)\ar[d]^{\simeq}\ar[r]^{} &\iota_x^*R\iota^!R\underline{\mathrm{Hom}}(\bz'(e-n),\D_{\X_{\infty}}[-2e])\ar[d]^{\simeq}  \\
%\tau_{\leq e-n-2}C_*(G_{\br},\bz(e-n))[-e]\ar[r]^{Id}&\tau_{\leq e-n-2}C_*(G_{\br},\bz(e-n))[-e] 
%}
%\]
%for $e-n>0$ and the commutativity of the square
%\[ \xymatrix{
%\iota_x^*R\iota^!\bz(n)\ar[d]^{\simeq}\ar[r]^{} &\iota_x^*R\iota^!R\underline{\mathrm{Hom}}(\bz'(e-n),\D_{\X_{\infty}}[-2e])\ar[d]^{\simeq}  \\
%\tau^{\leq n-e}R\Gamma(G_{\br},\bz(n-e))[-e]\ar[r]^{Id}&\tau^{\leq n-e}R\Gamma(G_{\br},\bz(n-e))[-e] 
%}
%\]
%for $e-n\leq 0$. It should be doable to check this, and I am willing to admit it is OK.
%\end{rem}

\subsection{Comparison with $R\Gamma(\X(\bc),\bz(n))$}

Recall that we define $G_\br$-equivariant sheaves 
$$\bz(n):=(2i\pi)^n\bz\subset \bq(n):=(2i\pi)^n\bq \subset \br(n):=(2i\pi)^n\br\subset\bc$$
on $\X(\bc)$.  We abbreviate $C^*:=R\Hom(C,\bq)$ for a complex of $\bq$-vector spaces $C$ and let $C^\pm$ be the image of the idempotent $(\sigma\pm 1 )/2$ if $C$ carries a $G_\br=\{1,\sigma\}$-action. Recall that 
$$R\Gamma_W(\X_{\infty},\bz(n)):=R\Gamma(\X_{\infty},i^*_{\infty}\bz(n))$$ 
and that 
$i^*_{\infty}\bz(n)\otimes\bq \cong \pi_*\bq(n)\cong R\pi_*\bq(n)$ in $\mathrm{Sh}(\X_\infty)$. We therefore have isomorphisms
$$R\Gamma_W(\X_{\infty},\bz(n))_\bq  \simeq R\Gamma(\X_{\infty},R\pi_*\bq(n))\simeq R\Gamma(G_{\br}; \X(\bc),\bq(n))\simeq R\Gamma(\X(\bc),\bq(n))^+$$
and combining this with Poincar\'e duality
\begin{equation}  R\Gamma(\mathcal{X}(\mathbb{C}),\bq(r)) \otimes R\Gamma(\mathcal{X}(\mathbb{C}),\bq(e-r))\xrightarrow{\cup} R\Gamma(\mathcal{X}(\mathbb{C}),\bq(e))\xrightarrow{\mathrm{Tr}}\bq[-2e]\label{bettipd}\end{equation}
on the $2e$-manifold $\X(\bc)$ we obtain an isomorphism
\begin{equation}R\Gamma_W(\mathcal{X}_{\infty},\mathbb{Z}(d-n))^*_\bq \simeq R\Gamma(\mathcal{X}(\mathbb{C}),\bq(d-n))^{*, +} \simeq R\Gamma(\mathcal{X}(\mathbb{C}),\bq(n-1))^{+}[-2e] \notag\end{equation}
using $e=d-1$. There is also a tautological isomorphism $\tau$ induced by multiplication by $2\pi i$ in the sense that the diagram
\begin{equation}\begin{CD} R\Gamma(\mathcal{X}(\mathbb{C}),\bq(n-1))^+@>>> R\Gamma(\mathcal{X}(\mathbb{C}),\bc)\\
@V\sim V\tau V @V\sim V\cdot 2\pi i V\\
R\Gamma(\mathcal{X}(\mathbb{C}),\bq(n))^-@>>> R\Gamma(\mathcal{X}(\mathbb{C}),\bc)\end{CD}\label{noncan}\end{equation}
commutes. 
Combining the previous isomorphisms we obtain an isomorphism
\begin{align}  &\left(\mydet_\bz R\Gamma_W(\mathcal{X}_{\infty},\mathbb{Z}(n))\otimes  \mathrm{det}^{-1}_{\mathbb{Z}}R\Gamma_{W}(\mathcal{X}_{\infty},\mathbb{Z}(d-n))\right)_\bq \notag \\
 \simeq  \,\,&\mydet_\bq \left( R\Gamma(\mathcal{X}(\mathbb{C}),\bq(n))^+\oplus R\Gamma(\mathcal{X}(\mathbb{C}),\bq(n-1))^+\right) \label{firstiso} \\
 \simeq  \,\,&\mydet_\bq \left( R\Gamma(\mathcal{X}(\mathbb{C}),\bq(n))^+\oplus R\Gamma(\mathcal{X}(\mathbb{C}),\bq(n))^-\right) \notag\\
 \simeq \,\,&\mydet_\bq R\Gamma(\mathcal{X}(\mathbb{C}),\bq(n))\notag\\
 \simeq\,\, &(\mydet_\bz R\Gamma(\mathcal{X}(\mathbb{C}),\bz(n)))_\bq\notag
\end{align}
which we denote by $\lambda_B$. 

\begin{cor}
We have
\begin{multline*} \lambda_B\left( \mathrm{det}_{\bz}R\Gamma_W(\X_{\infty},\bz(n))\otimes \mathrm{det}^{-1}_{\bz}R\Gamma_W(\X_{\infty},\bz(d-n))\right)\\ = \mathrm{det}_{\bz}R\Gamma(\X(\bc),\bz(n)) \otimes \mathrm{det}^{(-1)^n}_{\bz}R\Gamma(\X(\br),\bz/2\bz)\end{multline*}
\label{lamba-betti-iso}\end{cor}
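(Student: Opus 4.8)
The plan is to track the integral structures through each of the isomorphisms that make up $\lambda_B$, rather than just the rational structures. The difference between the two sides of the claimed identity is a $2$-power, so the strategy is to locate precisely where torsion enters and show it contributes exactly $\mathrm{det}^{(-1)^n}_{\bz}R\Gamma(\X(\br),\bz/2\bz)$.

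First I would note that two of the isomorphisms in the chain defining $\lambda_B$ are honest isomorphisms of invertible $\bz$-modules and contribute nothing: the final comparison $(\mathrm{det}_\bz R\Gamma(\X(\bc),\bz(n)))_\bq \simeq \mathrm{det}_\bq R\Gamma(\X(\bc),\bq(n))$ is by definition compatible with the lattice $\mathrm{det}_\bz R\Gamma(\X(\bc),\bz(n))$, and the decomposition $R\Gamma(\X(\bc),\bq(n)) \simeq R\Gamma(\X(\bc),\bq(n))^+ \oplus R\Gamma(\X(\bc),\bq(n))^-$ into $\sigma$-eigenspaces is also an isomorphism of lattices once one works with $\bz[1/2]$-coefficients — the only obstruction here is again a $2$-power coming from the idempotents $(\sigma\pm 1)/2$, and this must be accounted for. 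The tautological isomorphism $\tau$ of \eqref{noncan} identifies the lattice $R\Gamma(\X(\bc),\bz(n-1))^+$ with $R\Gamma(\X(\bc),\bz(n))^-$ up to the factor it introduces via $2\pi i$; but since we are on the level of determinants and $\tau$ is multiplication by a power of $2\pi i$ compatibly on source and target, on integral determinants it is an isomorphism (the real periods cancel). So the content is concentrated in two places: (a) the comparison $R\Gamma_W(\X_\infty,\bz(n))_\bq \simeq R\Gamma(\X(\bc),\bq(n))^+$ and how $\mathrm{det}_\bz R\Gamma_W(\X_\infty,\bz(n))$ sits inside the right side, and (b) the Poincar\'e duality isomorphism \eqref{bettipd} and its interaction with the $+$ part, used to handle $\mathrm{det}^{-1}_\bz R\Gamma_W(\X_\infty,\bz(d-n))$.

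For (a): by the very definition $R\Gamma_W(\X_\infty,\bz(n)) = R\Gamma(\X_\infty, i^*_\infty\bz(n))$ and the exact triangle $i^*_\infty\bz(n)\to R\pi_*\bz(n)\to \tau^{>n}R\widehat\pi_*\bz(n)$. Rationally $R\pi_*\bz(n)$ becomes $\pi_*\bq(n)$ and the Tate-cohomology term dies, giving the identification with $R\Gamma(\X(\bc),\bq(n))^+$. Integrally, $R\Gamma(\X_\infty,R\pi_*\bz(n))$ computes $R\Gamma(G_\br;\X(\bc),\bz(n))$, whose relation to $R\Gamma(\X(\bc),\bz(n))^{G_\br}$ and to the honest $+$-part involves exactly the $2$-torsion measured by Tate cohomology of $G_\br$ on the cohomology of $\X(\bc)$. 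The key input — already isolated in the paper as Prop.~\ref{final-bettiprop}, "which provides the correct power of $2$ appearing in the functional equation" — should identify this discrepancy, localized at the fixed locus, with $R\Gamma(\X(\br),\bz/2\bz)$. I would invoke that proposition (together with the duality computations of Theorem~\ref{dualitythm} and its corollaries, which describe $R\iota^! \bz(n)$ in terms of group (co)homology of $G_\br$ supported on $\X(\br)$) to pin down the contribution. The role of Poincar\'e duality in (b) is to convert $R\Gamma_W(\X_\infty,\bz(d-n))$ into the dual of an $R\Gamma(\X(\bc),\bz(n-1))^+$, and since Poincar\'e duality on the orientable $2e$-manifold $\X(\bc)$ is a perfect integral pairing, no new factor appears there beyond what \eqref{bettipd} already records; the orientation sheaf is trivial because $\X(\bc)$ is a complex manifold.

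The main obstacle I anticipate is bookkeeping the parity: getting the exponent $(-1)^n$ on $\mathrm{det}_\bz R\Gamma(\X(\br),\bz/2\bz)$ exactly right, rather than up to sign or with the wrong sign of exponent. This is where one must be careful that the $+$-part appears for $\bz(n)$ but one is dualizing the $\bz(d-n)$ term, so the twist $(2i\pi)^{d-n}$ versus $(2i\pi)^{n-1}$ flips the relevant eigenspace, and the $(-1)^n$ comes from the interplay of this sign-of-eigenspace with the $(-1)$ in $\mathrm{det}^{-1}_\bz R\Gamma_W(\X_\infty,\bz(d-n))$; combined with the $2$-torsion computation from Prop.~\ref{final-bettiprop} one should land exactly on the stated formula. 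Everything else is a matter of assembling the displayed isomorphisms \eqref{firstiso} and checking at each arrow whether it preserves the integral lattice or introduces a controlled $2$-power.
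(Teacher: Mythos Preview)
Your proposal has a genuine gap, and in one place is circular. You plan to invoke Prop.~\ref{final-bettiprop} as ``the key input'', but in the paper that proposition is obtained \emph{from} the corollary you are trying to prove (combined with Prop.~\ref{real-eulerchar}); it cannot be used here. More substantively, your strategy of tracking lattices through the chain of rational isomorphisms in (\ref{firstiso}) runs into the problem that the intermediate objects $R\Gamma(\X(\bc),\bq(n))^{\pm}$ have no canonical integral structure (the idempotents $(\sigma\pm 1)/2$ require inverting $2$), so statements like ``$\tau$ is an isomorphism of integral determinants'' or ``Poincar\'e duality on $\X(\bc)$ introduces no new factor on the $+$-part'' are not well-posed as written. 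You never isolate a concrete mechanism by which the cohomology of $\X(\br)$ with $\bz/2\bz$-coefficients enters.

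The paper's argument avoids all of this by never passing through the rational $\pm$-decomposition integrally. Instead it produces three exact triangles of perfect complexes of abelian groups and reads off the determinant identity directly. The first, coming from the short exact sequence $0\to\bz(1)\to\bz[G_\br]\to\bz\to 0$ of $G_\br$-modules, is a sheaf-level triangle $i^*_\infty\bz(n)\to Rp_*\bz(n)\to i^*_\infty\bz(n-1)$ on $\X_\infty$, giving
\[
R\Gamma_W(\X_\infty,\bz(n))\to R\Gamma(\X(\bc),\bz(n))\to R\Gamma_W(\X_\infty,\bz(n-1)).
\]
The second is the integral duality of Theorem~\ref{dualitythm}, which identifies $R\Gamma_W(\X_\infty,\bz(d-n))$ with the $\bz$-dual of $R\Gamma(\X_\infty,\bz'(n-1))$ (note: $\bz'$, not $\bz$). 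The third compares $\bz'(n-1)$ and $i^*_\infty\bz(n-1)$: by their very definitions these sit in a triangle whose third term is $\iota_*(\bz/2\bz)_{\X(\br)}[-n]$, so
\[
R\Gamma(\X(\br),\bz/2\bz)[-n]\to R\Gamma(\X_\infty,\bz'(n-1))\to R\Gamma_W(\X_\infty,\bz(n-1)).
\]
Tensoring the resulting determinant identities gives the corollary immediately, and the shift $[-n]$ is exactly what produces the exponent $(-1)^n$. The point you were missing is that the $\X(\br)$ contribution arises not from some diffuse $2$-torsion bookkeeping but from the single-degree discrepancy between $\bz'(n-1)$ and $i^*_\infty\bz(n-1)$, which is supported on the real locus by construction.
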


\begin{proof}
We write $G_{\br}=\{1,\sigma\}$. We have an exact sequence of $\bz[G_{\br}]$-modules
$$0\rightarrow \bz\cdot(\sigma-1)\rightarrow \bz[G_{\br}]\stackrel{\epsilon}{\rightarrow} \bz\rightarrow 0$$
where $\epsilon$ is the augmentation map. We have an isomorphism of $\bz[G_{\br}]$-modules
$(\sigma-1)\cdot\bz\simeq (2i\pi)\bz$ which maps $(\sigma-1)$ to $(2i\pi)$. We write $\bz(n):=(2i\pi)^n\bz$, so that we have an exact sequence of $\bz[G_{\br}]$-modules
\begin{equation}\label{exsq}
0\rightarrow \bz(1)\rightarrow \bz[G_{\br}]\stackrel{\epsilon}{\rightarrow} \bz\rightarrow 0.
\end{equation}
We denote by
$$p:\mathrm{Sh}(G_{\br},\X(\bc))\rightarrow \mathrm{Sh}(G_{\br},\X_{\infty})$$
the morphism of topoi induced by the equivariant continuous map $p:\X(\bc)\rightarrow \X_{\infty}$, where $G_{\br}$ acts trivially on $\X_{\infty}$. The category of abelian sheaves on $\mathrm{Sh}(G_{\br},\X_{\infty})$ is equivalent to the category of sheaves of $\bz[G_{\br}]$-modules over $\X_{\infty}$.
For any sheaf $\mathcal{F}$ of $\bz[G_{\br}]$-modules over $\X_{\infty}$, and any $\bz[G_{\br}]$-module $M$, we define
$$R\underline{\mathrm{Hom}}_{\mathrm{Sh}(G_{\br},\X_{\infty})}(M,\mathcal{F})$$
where $M$ is seen as a constant sheaf of $\bz[G_{\br}]$-modules over $\X_{\infty}$. We have
$$R\pi_*\bz(n)\simeq R\underline{\mathrm{Hom}}_{\mathrm{Sh}(G_{\br},\X_{\infty})}(\bz,Rp_*\bz(n)).$$
Moreover the functor
$$\appl{\mathrm{Ab}(G_{\br},\X_{\infty})}{\mathrm{Ab}(G_{\br},\X_{\infty})}{\mathcal{F}}{\mathcal{F}(1):=\mathcal{F}\otimes_{\bz}\bz(1)}$$
is an equivalence of abelian categories with quasi-inverse $(-)\otimes_{\bz}\bz(-1)$. In particular we have
\begin{eqnarray*}
R\pi_*\bz(n-1)&\simeq&R\underline{\mathrm{Hom}}_{\mathrm{Sh}(G_{\br},\X_{\infty})}(\bz,Rp_*\bz(n-1))\\
&\simeq&R\underline{\mathrm{Hom}}_{\mathrm{Sh}(G_{\br},\X_{\infty})}(\bz(1),(Rp_*\bz(n-1))(1))\\
&\simeq & R\underline{\mathrm{Hom}}_{\mathrm{Sh}(G_{\br},\X_{\infty})}(\bz(1),Rp_*\bz(n)).
\end{eqnarray*}
Finally, we have
$$p_*\bz(n)\simeq Rp_*\bz(n)\simeq R\underline{\mathrm{Hom}}_{\mathrm{Sh}(G_{\br},\X_{\infty})}(\bz[G_{\br}],Rp_*\bz(n)).$$
Therefore, (\ref{exsq}) induces an exact triangle
$$R\pi_*\bz(n)\rightarrow Rp_*\bz(n)\rightarrow R\pi_*\bz(n-1)$$
and an exact diagram:
\[ \xymatrix{
Rp_*\bz(n)\ar[d]\ar[r]^{} &i^*_{\infty}\bz(n-1)\ar[d]^{} \ar[r]^{}&i^*_{\infty}\bz(n)[1]\ar[d] \\
Rp_*\bz(n)\ar[d]\ar[r]^{} &R\pi_*\bz(n-1)\ar[d]^{} \ar[r]^{}&R\pi_*\bz(n)[1]\ar[d] \\
0\ar[r]^{}&\tau^{>n-1}R\widehat{\pi}_*\bz(n-1) \ar[r]^{}& (\tau^{>n}R\widehat{\pi}_*\bz(n))[1] \\
}
\]
In particular, there is an exact triangle
$$ i^*_{\infty}\bz(n)\rightarrow Rp_*\bz(n)\rightarrow i^*_{\infty}\bz(n-1)$$
hence
\begin{equation}\label{comp1}
R\Gamma(\X_{\infty},\bz(n))\rightarrow R\Gamma(\X(\bc),\bz(n))\rightarrow R\Gamma(\X_{\infty},\bz(n-1)).
\end{equation}
Moreover, we have the duality equivalence
\begin{equation}\label{comp2}
R\Gamma(\X_{\infty},\bz(d-n))\stackrel{\sim}{\rightarrow} R\mathrm{Hom}(R\Gamma(\X_{\infty},\bz'(n-1)),\bz[-2e]).
\end{equation}
Finally, we have the following exact diagram
\[ \xymatrix{
\iota_{*}\bz/2\bz_{\X(\br)}[-n]\ar[d]\ar[r]^{} &\bz'(n-1)\ar[d]^{} \ar[r]^{}&i^*_{\infty}\bz(n-1)\ar[d] \\
0\ar[d]\ar[r]^{} &R\pi_*\bz(n-1)\ar[d]^{} \ar[r]^{}&R\pi_*\bz(n-1)\ar[d] \\
\iota_{*}\bz/2\bz_{\X(\br)}[-n+1]\ar[r]^{}&\tau^{\geq n-1}R\widehat{\pi}_*\bz(n-1) \ar[r]^{}& \tau^{>n-1}R\widehat{\pi}_*\bz(n-1) \\
}
\]
where $\bz/2\bz_{\X(\br)}$ is the constant sheaf $\bz/2\bz$ on $\X(\br)$, hence an exact triangle
\begin{equation}\label{comp3}
R\Gamma(\X(\br),\bz/2\bz)[-n]\rightarrow R\Gamma(\X_{\infty},\bz'(n-1)) \rightarrow R\Gamma(\X_{\infty},\bz(n-1)).
\end{equation}
Then (\ref{comp1}), (\ref{comp2}) and (\ref{comp3}) induce the following canonical isomorphisms:
\begin{eqnarray*}
&&\mathrm{det}_{\bz}R\Gamma(\X_{\infty},\bz(n))\otimes \mathrm{det}^{-1}_{\bz}R\Gamma(\X_{\infty},\bz(d-n))\\
&\simeq& \mathrm{det}_{\bz}R\Gamma(\X_{\infty},\bz(n))\otimes \mathrm{det}_{\bz}R\Gamma(\X_{\infty},\bz'(n-1))\\
&\simeq& \mathrm{det}_{\bz}R\Gamma(\X_{\infty},\bz(n))\otimes \mathrm{det}_{\bz}R\Gamma(\X_{\infty},\bz(n-1))\otimes \mathrm{det}_{\bz}R\Gamma(\X(\br),\bz/2\bz)[-n]\\
&\simeq& \mathrm{det}_{\bz}R\Gamma(\X(\bc),\bz(n)) \otimes \mathrm{det}_{\bz}R\Gamma(\X(\br),\bz/2\bz)[-n]\\
&\simeq & \mathrm{det}_{\bz}R\Gamma(\X(\bc),\bz(n)) \otimes \mathrm{det}^{(-1)^n}_{\bz}R\Gamma(\X(\br),\bz/2\bz).
\end{eqnarray*}
\end{proof}

We now introduce some notation: we set
$$d_+(\X,n):= \sum_{i\in\bz}(-1)^{i} \mathrm{dim}_{\bq}H^{i}(\X(\bc),\bq(n))^+$$
and
$$d_-(\X,n):= \sum_{i\in\bz}(-1)^{i} \mathrm{dim}_{\bq}H^{i}(\X(\bc),\bq(n))^-.$$
If $Z$ is a manifold and $F$ a field, we set
$$\chi(Z,F):=\sum_{i\in\bz}(-1)^{i} \mathrm{dim}_{F}H^{i}(Z,F).$$ 

\begin{defn} For a perfect complex of abelian groups $C$ with finite cohomology groups we denote by
\[ \chi^\times(C)=\prod_{i\in\bz}|H^i(C)|^{(-1)^i}\]
its multiplicative Euler characteristic.
\label{chidef}\end{defn}

\begin{prop} We have
$$\chi^{\times}(R\Gamma(\X(\br),\bz/2\bz)[-n])=2^{d_+(\X,n)-d_-(\X,n)}.$$
\label{real-eulerchar}\end{prop}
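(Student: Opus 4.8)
The plan is to turn the multiplicative Euler characteristic into a power of $2$ and then to identify the exponent with $d_+(\X,n)-d_-(\X,n)$ using the involution on $\X(\bc)$. First I would note that the complex $R\Gamma(\X(\br),\bz/2\bz)[-n]$ has cohomology $H^{i-n}(\X(\br),\bz/2\bz)$ in degree $i$, and each of these is a finite $\bF_2$-vector space, so that directly from Definition \ref{chidef}
$\chi^{\times}(R\Gamma(\X(\br),\bz/2\bz)[-n])=\prod_{i}|H^{i-n}(\X(\br),\bz/2\bz)|^{(-1)^i}=2^{(-1)^n\chi(\X(\br),\bF_2)}$.
Since the Euler characteristic of a finite, compact, triangulable space does not depend on the coefficient field, $\chi(\X(\br),\bF_2)=\chi(\X(\br),\bq)$, and the proposition is reduced to the identity $(-1)^n\chi(\X(\br),\bq)=d_+(\X,n)-d_-(\X,n)$.

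Next I would eliminate the Tate twist. As a $\bq[G_\br]$-module, $\bq(n)$ is the trivial representation if $n$ is even and the sign representation if $n$ is odd, and tensoring any $\bq[G_\br]$-module with the sign representation exchanges its $+$ and $-$ eigenspaces. Hence $\dim_\bq H^i(\X(\bc),\bq(n))^{\pm}$ equals $\dim_\bq H^i(\X(\bc),\bq)^{\pm}$ for $n$ even and $\dim_\bq H^i(\X(\bc),\bq)^{\mp}$ for $n$ odd, so that $d_+(\X,n)-d_-(\X,n)=(-1)^n(d_+(\X,0)-d_-(\X,0))$, and it suffices to treat the case $n=0$.

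For $n=0$ I would argue as follows. Since $\X_\bq$ is regular of finite type over $\bq$ it is smooth, so $\X(\bc)$ is a compact $2e$-manifold on which $G_\br=\{1,\sigma\}$ acts by complex conjugation with fixed locus $\X(\br)$, and $p\colon\X(\bc)\to\X_\infty$ restricts to an honest two-sheeted covering over $\X^{\circ}_{\infty}=\X_\infty\setminus\X(\br)$. Additivity of the Euler characteristic for the closed stratification $\X(\bc)=\X(\bc)^{\circ}\sqcup\X(\br)$ then gives $\chi(\X(\bc),\bq)=2\chi(\X^{\circ}_{\infty},\bq)+\chi(\X(\br),\bq)=2\chi(\X_\infty,\bq)-\chi(\X(\br),\bq)$. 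On the other hand $d_+(\X,0)+d_-(\X,0)=\chi(\X(\bc),\bq)$ tautologically, while the isomorphisms recalled just before the statement give $R\Gamma(\X(\bc),\bq)^+\simeq R\Gamma_W(\X_\infty,\bz(0))_\bq\simeq R\Gamma(\X_\infty,\bq)$ (since $(p_*\bq)^{G_\br}$ is the constant sheaf $\bq$ on $\X_\infty$), whence $d_+(\X,0)=\chi(\X_\infty,\bq)$. Subtracting the two displays yields $d_+(\X,0)-d_-(\X,0)=2\chi(\X_\infty,\bq)-\chi(\X(\bc),\bq)=\chi(\X(\br),\bq)$, which is exactly the required identity; equivalently, one may invoke the topological Lefschetz fixed point formula $\chi(\X(\br),\bq)=\sum_i(-1)^i\trace(\sigma^{*}\mid H^i(\X(\bc),\bq))$ for the smooth involution $\sigma$.

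None of the steps is really an obstacle: the whole content sits in the $n=0$ identity $d_+(\X,0)-d_-(\X,0)=\chi(\X(\br),\bq)$, and the only point requiring a bit of care is that $\sigma$ is a merely topological (not algebraic) involution, so one must use the topological Lefschetz formula, or — as above — additivity of the topological Euler characteristic, which is legitimate because $\X$ proper forces $\X(\bc)$, $\X(\br)$ and $\X_\infty$ to be compact and triangulable. The remaining work is the sign bookkeeping coming from the shift $[-n]$ and from the Tate twist on $\bq(n)$, which is routine.
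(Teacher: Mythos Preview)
Your argument is correct and follows the paper's overall strategy: reduce to $n=0$ via the sign bookkeeping for the shift and the Tate twist, invoke independence of the field of coefficients for $\chi$, and then identify $d_+(\X,0)-d_-(\X,0)$ with $\chi(\X(\br),\bq)$. The difference lies in how you carry out this last identification. The paper packages it as a Lefschetz fixed point statement (its Lemma~\ref{Lefschetz}): for a compact orientable manifold $Y$ with involution $\sigma$ and smooth fixed locus $Z$, one has $\sum_i(-1)^i\trace(\sigma\mid H^i(Y,\bq))=\chi(Z,\bq)$, proved there via Poincar\'e duality on the open complement $Y^\circ$ to force $\trace(\sigma\mid R\Gamma_c(Y^\circ,\bq))=0$. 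You instead observe directly that $d_+(\X,0)=\chi(\X_\infty,\bq)$ (since $R\pi_*\bq$ is the constant sheaf $\bq$) and then use additivity of the Euler characteristic together with the doubling property for the free double cover $\X(\bc)^\circ\to\X_\infty^\circ$. Your route is a bit more elementary in that it avoids any appeal to Poincar\'e duality or orientability; the paper's route isolates a cleaner stand-alone lemma valid for arbitrary orientable involutions. One small point of hygiene: your additivity identity $\chi(\X(\bc),\bq)=2\chi(\X_\infty^\circ,\bq)+\chi(\X(\br),\bq)$ should be read with compactly supported Euler characteristics on the open pieces (or via an equivariant triangulation and cell counting); this is implicit in your phrase ``compact and triangulable'' but is worth making explicit.
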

\begin{proof}
We have 
$$d_+(\X,n)=d_-(\X,n-1)=d_+(\X,n-2)$$ 
hence 
$$d_{\pm}(\X,n)=(-1)^n\cdot d_{\pm}(\X,0).$$
We obtain
$$2^{d_+(\X,n)-d_-(\X,n)}=(2^{d_+(\X,0)-d_-(\X,0)})^{(-1)^n}.$$
Similarly, we have
$$\chi^{\times}(R\Gamma(\X(\br),\bz/2\bz)[-n]):=\chi^{\times}(R\Gamma(\X(\br),\bz/2\bz))^{(-1)^n},$$
hence it is enough to show the result for $n=0$. In view of Lemma \ref{Lefschetz} and Lemma \ref{chi}, we have
\begin{eqnarray*}
d_+(\X,0)-d_-(\X,0)&=&\sum_{i\in\bz}(-1)^{i}\cdot \left(\mathrm{dim}_{\bq}H^{i}(\X(\bc),\bq)^+-\mathrm{dim}_{\bq}H^{i}(\X(\bc),\bq)^-\right)\\
&=&\sum_{i\in\bz}(-1)^{i}\cdot \mathrm{Tr}\left(\sigma\mid H^{i}(\X(\bc),\bq)\right)\\
&=&\chi(\X(\br),\bq)\\
&=&\chi(\X(\br),\mathbb{F}_2).
\end{eqnarray*}
Hence the result follows from
$$\chi^{\times}(R\Gamma(\X(\br),\bz/2\bz))=2^{\chi(\X(\br),\mathbb{F}_2)}.$$
\end{proof}

\begin{lem}\label{Lefschetz}
Let $Y$ be a compact orientable manifold with an involution $\sigma$ whose fixed points form a closed submanifold $Z$. Then we have 
$$\sum_{i\in\bz}(-1)^{i}\cdot \mathrm{Tr}\left(\sigma\mid H^{i}(Y,\bq)\right)=\chi(Z,\bq).$$
\end{lem}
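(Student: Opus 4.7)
The plan is to deduce this from the Lefschetz fixed point theorem via an equivariant CW/simplicial computation. The argument has three steps: produce an equivariant triangulation adapted to $Z$, apply Hopf's trace formula on the chain complex, and match cohomology traces with homology traces over $\bq$.

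First, I would invoke the existence of a smooth $G_\br$-equivariant triangulation of $Y$ (since $\sigma$ generates a compact Lie group action on a smooth manifold, such a triangulation exists by Illman's theorem, or one may use a $\sigma$-invariant Riemannian metric and geodesic simplices). Choosing this triangulation so that $Z \subset Y$ is a subcomplex, and passing to the barycentric subdivision if necessary, we may arrange that every simplex fixed set-wise by $\sigma$ is actually fixed pointwise, hence lies in $Z$. Every simplex not in $Z$ is then part of a free $\sigma$-orbit of size two.

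Second, I would apply the Hopf trace formula to the resulting finite chain complex $C_*(Y, \bq)$: for the endomorphism $\sigma_*$ one has
\[
\sum_i (-1)^i \mathrm{Tr}\bigl(\sigma_* \mid H_i(Y, \bq)\bigr) \;=\; \sum_i (-1)^i \mathrm{Tr}\bigl(\sigma \mid C_i(Y, \bq)\bigr).
\]
On the right-hand side, the simplices pair up off $Z$ and contribute $0$ in each degree (the trace of a permutation matrix without fixed elements is $0$), while the simplices in $Z$ contribute $+1$ each (since $\sigma$ acts as the identity on them). Thus the right-hand side equals
\[
\sum_i (-1)^i \#\{\text{$i$-simplices of } Z\} \;=\; \chi(Z, \bq).
\]

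Third, I would pass from homology to cohomology. Over $\bq$, the universal coefficient theorem gives $H^i(Y, \bq) \cong \Hom_\bq(H_i(Y, \bq), \bq)$ as $\sigma$-representations, and the trace of an endomorphism equals the trace of its dual. Hence
\[
\sum_i (-1)^i \mathrm{Tr}\bigl(\sigma \mid H^i(Y, \bq)\bigr) \;=\; \sum_i (-1)^i \mathrm{Tr}\bigl(\sigma_* \mid H_i(Y, \bq)\bigr) \;=\; \chi(Z, \bq),
\]
which is the desired identity. The main technical obstacle is ensuring an equivariant triangulation in which fixed simplices are pointwise fixed; this is where compactness of $Y$ and smoothness of the involution (or at least a simplicial-action hypothesis implicit in the "manifold with involution" setup) are used. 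Orientability plays no role in this chain-level argument, so it is a free hypothesis from the point of view of the proof, reflecting that the statement is really about the Lefschetz number of an involution and not about Poincar\'e duality.
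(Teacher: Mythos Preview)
Your proof is correct and follows the classical chain-level route to the Lefschetz fixed point theorem: equivariant triangulation, Hopf trace formula, and the observation that free orbits contribute zero trace. This is a genuinely different argument from the paper's.

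The paper instead uses the open-closed decomposition $Y^\circ := Y \setminus Z \hookrightarrow Y \hookleftarrow Z$. From the exact triangle $R\Gamma_c(Y^\circ,\bq)\to R\Gamma(Y,\bq)\to R\Gamma(Z,\bq)$ and the triviality of $\sigma$ on $Z$, the statement reduces to showing $\mathrm{Tr}(\sigma\mid R\Gamma_c(Y^\circ,\bq))=0$. The paper then invokes Poincar\'e duality on the (orientable, non-compact) manifold $Y^\circ$ to pass from $H^*_c$ to $H^*$, and finally applies the Lefschetz fixed point theorem to the fixed-point-free involution on $Y^\circ$ to conclude this trace vanishes. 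So orientability is genuinely used in the paper's argument, precisely to invoke Poincar\'e duality on $Y^\circ$; your observation that it plays no role in the chain-level argument is correct and shows the hypothesis can be dropped. Your approach is more self-contained and slightly more general; the paper's approach is quicker if one is willing to cite Poincar\'e duality and Lefschetz as black boxes, and fits the sheaf-theoretic style of the surrounding section.
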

\begin{proof}
Let $G_{\br}:=\{1,\sigma\}$. If $C$ is a perfect complex of $\bq$-vector spaces with $G_{\br}$-action, we set
$$\mathrm{Tr}\left(\sigma\mid C\right):=\sum_{i\in\bz}(-1)^{i}\cdot \mathrm{Tr}\left(\sigma\mid H^{i}(C)\right).$$
Let $Y^{\circ}:=Y-Z$. The exact triangle
$$R\Gamma_c(Y^{\circ},\bq)\rightarrow R\Gamma(Y,\bq)\rightarrow R\Gamma(Z,\bq)$$
gives
\begin{eqnarray*}
\mathrm{Tr}\left(\sigma\mid R\Gamma(Y,\bq)\right)&=& \mathrm{Tr}\left(\sigma\mid R\Gamma_c(Y^{\circ},\bq)\right)+\mathrm{Tr}\left(\sigma\mid R\Gamma(Z,\bq)\right)\\
&=&\mathrm{Tr}\left(\sigma\mid R\Gamma_c(Y^{\circ},\bq)\right)+\chi(Z,\bq)
\end{eqnarray*}
since $\sigma$ acts trivially on $Z$ hence on $R\Gamma(Z,\bq)$. Therefore the result follows from
\begin{eqnarray*}
\mathrm{Tr}\left(\sigma\mid R\Gamma_c(Y^{\circ},\bq)\right)&:=& \sum_{i\in\bz}(-1)^{i}\cdot \mathrm{Tr}\left(\sigma\mid H^{i}_c(Y^{\circ},\bq)\right)\\
&=& \sum_{i\in\bz}(-1)^{i}\cdot \mathrm{Tr}\left(\sigma\mid H^{i}_c(Y^{\circ},\bq)^*\right)\\
&=& \sum_{i\in\bz}(-1)^{i}\cdot \mathrm{Tr}\left(\sigma\mid H^{d-i}(Y^{\circ},\bq)\right)\\
&=&(-1)^{d} \sum_{i\in\bz}(-1)^{i}\cdot \mathrm{Tr}\left(\sigma\mid H^{i}(Y^{\circ},\bq)\right)\\
&=&0.
\end{eqnarray*}
where we use Poincar\'e duality and the Lefschetz fixed point theorem. Here $d=\mathrm{dim}(Y)$.
\end{proof}

\begin{lem}\label{chi}
Let $Z$ be a topological space which is homotopy equivalent to a finite $CW$-complex. Then we have 
$$\chi(Z,F)=\chi(Z,F')$$
for any pair of fields $F,F'$.
\end{lem}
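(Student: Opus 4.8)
The plan is to reduce everything to counting cells in a finite $CW$-model of $Z$. First I would invoke homotopy invariance of singular cohomology: choosing a homotopy equivalence $Z\simeq W$ with $W$ a finite $CW$-complex, one gets $H^i(Z,F)\cong H^i(W,F)$ for every $i$ and every field $F$, so $\chi(Z,F)=\chi(W,F)$. Moreover, since $W$ has finitely many cells, $H^i(W,F)$ is finite-dimensional and vanishes outside the range $0\le i\le\dim W$, so the alternating sum defining $\chi(W,F)$ is a well-defined integer; this finiteness is the only point where the hypothesis that $Z$ has the homotopy type of a \emph{finite} $CW$-complex enters.

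Next I would compute $\chi(W,F)$ directly from the cellular cochain complex $C^\bullet_{\mathrm{cell}}(W;F)$. This is a bounded complex of finite-dimensional $F$-vector spaces whose cohomology is $H^\bullet(W,F)$ and whose term in degree $i$ has $F$-dimension equal to $c_i(W)$, the number of $i$-cells of $W$, a quantity independent of $F$. For a bounded complex of finite-dimensional vector spaces the Euler characteristic of the complex equals the Euler characteristic of its cohomology (both are additive along short exact sequences, and any such complex is a finite iterated extension of its cohomology groups, placed in the appropriate degrees, by two-term acyclic complexes). Hence
$$\chi(W,F)=\sum_{i\in\bz}(-1)^i\dim_F C^i_{\mathrm{cell}}(W;F)=\sum_{i\in\bz}(-1)^i c_i(W),$$
and the right-hand side does not depend on $F$. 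Combining with the first step yields $\chi(Z,F)=\sum_i(-1)^i c_i(W)=\chi(Z,F')$ for any two fields $F$ and $F'$.

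I expect essentially no obstacle here; this is a standard computation and the argument above is complete modulo the recollection of homotopy invariance and the cellular description of cohomology. If one prefers to bypass cellular cohomology, an equally routine variant works with the integral cellular chain complex $C^{\mathrm{cell}}_\bullet(W;\bz)$, a bounded complex of finitely generated free $\bz$-modules: since $F$ is flat over $\bz$, the complex $C^{\mathrm{cell}}_\bullet(W;\bz)\otimes_\bz F$ computes $H_\bullet(W,F)$, and the universal coefficient theorem over a field gives $\dim_F H^i(W,F)=\dim_F H_i(W,F)$, so that $\chi(W,F)=\sum_i(-1)^i\rank_\bz C^{\mathrm{cell}}_i(W;\bz)$, again manifestly independent of $F$.
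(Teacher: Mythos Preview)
Your proof is correct and essentially the same as the paper's. The paper represents $R\Gamma(Z,\bz)$ by a strictly perfect complex $C^*$ of abelian groups and observes that $\sum_i(-1)^i\rank_\bz C^i=\chi(Z,F)$ for every field $F$; this is exactly your ``alternative variant'' with the integral cellular chain complex playing the role of $C^*$, and your main argument via cellular cochains with $F$-coefficients is just a minor reformulation of the same idea.
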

\begin{proof}
The complex $R\Gamma(Z,\bz)$ is quasi-isomorphic to a strictly perfect complex of abelian groups $C^*$ and we have
$$\sum_{i\in\bz}(-1)^{i}\mathrm{rank}_{\bz}C^{i}=\sum_{i\in\bz}(-1)^{i}\mathrm{dim}_{F}(C^{i}\otimes_{\bz} F)= \sum_{i\in\bz}(-1)^{i}\mathrm{dim}_{F}H^{i}(C^*\otimes_{\bz} F)=\chi(Z,F)$$
for any field $F$. The result follows.
 \end{proof}

Combining Corollary \ref{lamba-betti-iso} with Prop. \ref{real-eulerchar} we obtain.

\begin{prop}
We have
\begin{multline*} \lambda_B\left( \mathrm{det}_{\bz}R\Gamma_W(\X_{\infty},\bz(n))\otimes \mathrm{det}^{-1}_{\bz}R\Gamma_W(\X_{\infty},\bz(d-n))\right)\\ = \mathrm{det}_{\bz}R\Gamma(\X(\bc),\bz(n)) \cdot 2^{d_-(\X,n)-d_+(\X,n)}\end{multline*}
\label{final-bettiprop}\end{prop}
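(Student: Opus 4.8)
The plan is to combine the two preceding results, Corollary \ref{lamba-betti-iso} and Proposition \ref{real-eulerchar}; the only additional input is the elementary identification of the multiplicative Euler characteristic of a torsion complex with the index of the corresponding $\bz$-lattice in $\bq$.

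First I would invoke Corollary \ref{lamba-betti-iso}, which already expresses the left-hand side as $\mathrm{det}_{\bz}R\Gamma(\X(\bc),\bz(n))\otimes\mathrm{det}^{(-1)^n}_{\bz}R\Gamma(\X(\br),\bz/2\bz)$ inside $\left(\mathrm{det}_{\bz}R\Gamma(\X(\bc),\bz(n))\right)_{\bq}$, where $\mathrm{det}^{(-1)^n}_{\bz}R\Gamma(\X(\br),\bz/2\bz)$ is a $\bz$-lattice in $\bq=\left(\mathrm{det}_{\bz}R\Gamma(\X(\br),\bz/2\bz)\right)_{\bq}$ via the canonical trivialisation coming from the acyclicity of $R\Gamma(\X(\br),\bz/2\bz)\otimes\bq$. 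It thus remains only to compute this lattice.

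Next I would use the standard fact that, for a perfect complex $C$ of abelian groups with finite cohomology groups, one has $\mathrm{det}_{\bz}C=\chi^{\times}(C)^{-1}\cdot\bz$ under that trivialisation; by multiplicativity of the determinant this reduces to the case $C=[\bz\xrightarrow{m}\bz]$, where it is immediate. Since $\mathrm{det}^{(-1)^n}_{\bz}C=\mathrm{det}_{\bz}(C[-n])$ and $\chi^{\times}(C[-n])=\chi^{\times}(C)^{(-1)^n}$, applying this to $C=R\Gamma(\X(\br),\bz/2\bz)$ gives
\[\mathrm{det}^{(-1)^n}_{\bz}R\Gamma(\X(\br),\bz/2\bz)=\chi^{\times}\!\bigl(R\Gamma(\X(\br),\bz/2\bz)[-n]\bigr)^{-1}\cdot\bz.\]
Proposition \ref{real-eulerchar} identifies the Euler characteristic on the right with $2^{d_{+}(\X,n)-d_{-}(\X,n)}$, so the lattice equals $2^{d_{-}(\X,n)-d_{+}(\X,n)}\cdot\bz$. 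Feeding this back into the identity of Corollary \ref{lamba-betti-iso} gives precisely $\mathrm{det}_{\bz}R\Gamma(\X(\bc),\bz(n))\cdot 2^{d_{-}(\X,n)-d_{+}(\X,n)}$.

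The part that needs the most care is the sign bookkeeping: one must be consistent with the convention $\mathrm{det}_{\bz}C=\chi^{\times}(C)^{-1}\cdot\bz$ (rather than $\chi^{\times}(C)\cdot\bz$), with the effect of the shift $[-n]$, and with the exponent $(-1)^n$ on the determinant, so that the power of $2$ comes out as $d_{-}(\X,n)-d_{+}(\X,n)$ and not its negative. Apart from that there is no further content: all of the geometry has already been absorbed into Proposition \ref{real-eulerchar} (via Lemmas \ref{Lefschetz} and \ref{chi}, i.e.\ via the Lefschetz fixed point theorem on $\X(\bc)$) and into Corollary \ref{lamba-betti-iso}.
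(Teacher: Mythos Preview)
Your proposal is correct and follows essentially the same approach as the paper: the paper's proof consists of the single observation that $\mathrm{det}_{\bz}C=\chi^{\times}(C)^{-1}\cdot\bz$ for a perfect torsion complex $C$, combined (as announced in the sentence preceding the proposition) with Corollary \ref{lamba-betti-iso} and Proposition \ref{real-eulerchar}. Your sign bookkeeping via the shift $[-n]$ is exactly the right way to make the exponent $(-1)^n$ match up with the formulation of Proposition \ref{real-eulerchar}.
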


\begin{proof} Note that if $C$ is as in Definition \ref{chidef} then $$\mydet_\bz C=\bz\cdot \chi^\times(C)^{-1}$$ under the canonical isomorphism
$$\mydet_\bq C_\bq\cong \bq$$ arising from the acyclicity of $C_\bq$.
\end{proof}

\section{Duality for derived de Rham cohomology and the Bloch conductor} \label{bloch}

In this section $\X$ is a regular scheme of dimension $d$, proper and flat over $\Spec(\bz)$. We denote by 
\begin{equation} L_{\X/\bz}\cong \Omega_{\X/\bz}[0]\label{cotangent}\end{equation}
the cotangent complex of $\X$ over $\bz$, a perfect complex of $\co_{\X}$-modules cohomologically concentrated in degree $0$. For any $r\in\bz$ we let
\[ L\wedge ^rL_{\X/\bz}\cong L\wedge^r \Omega_{\X/\bz}[0]\]
be the $r$-th derived exterior power of $L_{\X/\bz}$ \cite{Illusie71}[4.2.2.6] which is again a perfect complex of $\co_\X$-modules.  By definition $L\wedge ^r L_{\X/\bz}=0$ for $r<0$ but $L\wedge ^r L_{\X/\bz}$ is in general nonzero for $r>d-1=\rank_{\co_\X}\Omega_{\X/\bz}$.

\subsection{Coherent duality for $L\wedge ^r L_{\X/\bz}$} This subsection is a review of material from \cite{tsaito88}, \cite{ks04} and \cite{tsaito04} in the context of our global arithmetic scheme $\X$. The key result is Thm. \ref{lichtenbaum} which is an immediate translation of \cite{tsaito04}[Cor. 4.9] to our context.

\begin{lem} There is a canonical map 
\begin{equation} L\wedge ^{d-1}L_{\X/\bz}\to \mydet_{\co_\X}L_{\X/\bz}\cong \omega_{\X/\bz}\label{can}\end{equation}
where $\omega_{\X/\bz}$ is the relative dualizing sheaf. Hence we get induced maps
\begin{equation} L\wedge ^r L_{\X/\bz}\otimes^L L\wedge ^{d-1-r}L_{\X/\bz}\to  L\wedge ^{d-1}L_{\X/\bz}\to  \omega_{\X/\bz}\notag\end{equation}
and 
\begin{equation} L\wedge ^r L_{\X/\bz}\to \underline{R\Hom}(L\wedge ^{d-1-r}L_{\X/\bz},\omega_{\X/\bz})\label{duality}\end{equation}
in the derived category of coherent sheaves on $\X$.
\end{lem}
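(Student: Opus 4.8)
The plan is to build everything from a local free resolution of $\Omega:=\Omega_{\X/\bz}$ together with Illusie's description of the derived exterior powers of a two-term complex. Since $\X$ and $\Spec(\bz)$ are both regular, the morphism $\X\to\Spec(\bz)$ is a local complete intersection morphism, so $L_{\X/\bz}$ is perfect of Tor-amplitude $[-1,0]$; combined with (\ref{cotangent}) this means that Zariski-locally on $\X$ the sheaf $\Omega$ fits into a short exact sequence $0\to\F_1\xrightarrow{u}\F_0\to\Omega\to 0$ with $\F_0,\F_1$ locally free and $\rank\F_0-\rank\F_1=\rank\Omega=d-1$ (the generic fibre being smooth of dimension $d-1$ over $\bq$). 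I would then invoke \cite{Illusie71}[I.4.3] to represent $L\wedge^r L_{\X/\bz}\cong L\wedge^r\Omega$ by the perfect complex with $i$-th term $\Gamma^i\F_1\otimes\bigwedge^{r-i}\F_0$ placed in cohomological degree $-i$ for $0\le i\le r$, with differential induced by $u$. This model simultaneously exhibits $L\wedge^r L_{\X/\bz}$ as a perfect complex, shows it vanishes for $r<0$, and makes clear why it is in general nonzero for $r>d-1$.

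To produce the canonical map (\ref{can}), set $c=\rank\F_1$, so that $\det_{\co_\X}L_{\X/\bz}=\det\Omega=\det\F_0\otimes(\det\F_1)^{-1}$, and use the standard identification of the determinant of the cotangent complex of an lci morphism with the relative dualizing sheaf, $\det_{\co_\X}L_{\X/\bz}\cong\omega_{\X/\bz}$ (as in \cite{Illusie71}, or \cite{tsaito04}). I would then define $L\wedge^{d-1}L_{\X/\bz}\to(\det\Omega)[0]$ to be the morphism of complexes that is zero in negative degrees and in degree $0$ is the map $\bigwedge^{d-1}\F_0\to\det\F_0\otimes(\det\F_1)^{-1}$ coming from the pairing $\bigwedge^{d-1}\F_0\otimes\det\F_1\to\det\F_0$, $\eta\otimes\xi\mapsto\eta\wedge(\bigwedge^c u)(\xi)$. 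The one substantive check is that this kills the image of $\Gamma^1\F_1\otimes\bigwedge^{d-2}\F_0\to\bigwedge^{d-1}\F_0$, $a\otimes\omega\mapsto u(a)\wedge\omega$: if $e_1,\dots,e_c$ is a local basis of $\F_1$, the resulting element of $\det\F_0\otimes(\det\F_1)^{-1}$ involves the factor $u(a)\wedge u(e_1)\wedge\cdots\wedge u(e_c)$, which vanishes because $u(a)$ lies in the span of the $u(e_i)$. Granting this, we obtain a well-defined morphism in the derived category; its independence of the chosen resolution — hence canonicity and the gluing to a global map — follows from the functoriality of $L\wedge^{d-1}$ and of $\det$ under quasi-isomorphisms of two-term complexes of locally free sheaves.

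The remaining assertions are then formal. The derived exterior powers of $L_{\X/\bz}$ assemble into a graded-commutative algebra object of $D(\co_\X)$, so the multiplication gives $L\wedge^r L_{\X/\bz}\otimes^L L\wedge^{d-1-r}L_{\X/\bz}\to L\wedge^{d-1}L_{\X/\bz}$ (explicitly: wedge on the $\F_0$-factors and the product $\Gamma^i\otimes\Gamma^j\to\Gamma^{i+j}$ on the $\F_1$-factors of the models above); composing with (\ref{can}) yields the pairing valued in $\omega_{\X/\bz}$, and tensor–hom adjunction rewrites this pairing as the map (\ref{duality}). Since $\X/\bz$ is lci the sheaf $\omega_{\X/\bz}$ is invertible, so $\underline{R\Hom}(-,\omega_{\X/\bz})\cong\underline{R\Hom}(-,\co_\X)\otimes_{\co_\X}\omega_{\X/\bz}$ and (\ref{duality}) is merely a twisted form of the evident duality map. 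The only genuine obstacle is the construction of (\ref{can}): keeping Illusie's degree conventions straight and verifying that the degree-$0$ formula above defines a chain map that is independent of all choices; everything else reduces to the multiplicativity of the derived exterior algebra and to adjunction.
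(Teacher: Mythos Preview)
Your approach is correct and essentially the same as the paper's: both construct the map locally from a two-term locally free resolution of $\Omega_{\X/\bz}$, observe that the degree-$0$ map kills the image of the differential, and then argue independence of the resolution to glue. The paper differs only in presentation: it factors explicitly through the projection $L\wedge^{d-1}L_{\X/\bz}\to\mathcal H^0(L\wedge^{d-1}L_{\X/\bz})\cong\wedge^{d-1}\Omega_{\X/\bz}$ before mapping to $\omega_{\X/\bz}$, and it uses specifically the resolutions coming from closed embeddings $\X\hookrightarrow P$ into smooth $\bz$-schemes, so that the target is the concretely defined $\omega_{\X/\bz}^P=\underline{\Hom}(\wedge^{n-d+1}(\mathcal I/\mathcal I^2),\wedge^n i^*\Omega_{P/\bz})$; the independence check is then carried out explicitly against the gluing isomorphisms $\epsilon^{P',P}$ of \cite{beresrue10}[A.2] rather than by an appeal to abstract functoriality of $\det$. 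Your one-line ``functoriality of $L\wedge^{d-1}$ and of $\det$ under quasi-isomorphisms'' is where the paper does real work, so if you want to match its level of rigor you should spell out why your degree-$0$ formula is compatible with the canonical identifications $\det\F_0\otimes(\det\F_1)^{-1}\cong\det\F_0'\otimes(\det\F_1')^{-1}$ for two different resolutions.
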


\begin{proof} The multiplicative structure on derived exterior powers will be briefly recalled in the proof of Prop. \ref{drduality} below, so it remains to show the existence of (\ref{can}). Assume first there is a closed embedding $i:\X\to P$ of $\X$ into a smooth $\bz$-scheme $P$ with ideal sheaf $\mathcal{I}$.  The exact sequence of coherent sheaves on $\X$
\[ 0\to \mathcal{I}/\mathcal{I}^2\to i^*\Omega_{P/\bz}\to \Omega_{\X/\bz}\to 0\]
can be viewed as a realization of (\ref{cotangent}) as a strictly perfect complex since $\mathcal{I}/\mathcal{I}^2$ and $i^*\Omega_{P/\bz}$ are locally free of ranks $n-d+1$ and $n$, respectively, where $n$ is the relative dimension of $P$ over $\bz$. The natural map
$$ \wedge^{d-1}\Omega_{\X/\bz}\otimes \wedge^{n-d+1}(\mathcal{I}/\mathcal{I}^2) \to \wedge^n i^*\Omega_{P/\bz} $$
has adjoint
$$ \wedge^{d-1}\Omega_{\X/\bz}\to \underline{\Hom}(\wedge^{n-d+1}(\mathcal{I}/\mathcal{I}^2),\wedge^n i^*\Omega_{P/\bz}) =: \omega_{\X/\bz}^P$$
and combined with the natural map
$$ L\wedge ^{d-1}L_{\X/\bz}\to \mathcal{H}^0(L\wedge ^{d-1}L_{\X/\bz})\cong \wedge^{d-1}\Omega_{\X/\bz}$$
we obtain a morphism (\ref{can})$^P$ depending on $i:\X\to P$. If $i':\X\to P'$ is another embedding into a smooth $\bz$-scheme $P'$ an isomorphism 
$$\epsilon^{P',P}:  \omega_{\X/\bz}^P\xrightarrow{\sim} \omega_{\X/\bz}^{P'} $$
was constructed in \cite{beresrue10}[A.2]  which satisfies the usual cocycle condition in the presence of a third embedding $i''$. Since embeddings into smooth schemes always exist Zariski locally on $\X$ the cocycle condition implies that one can define $\omega_{\X/\bz}$ by glueing the locally defined  $\omega_{\X/\bz}^P$. It remains to show that likewise the locally obtained maps (\ref{can})$^P$ glue to a global map (\ref{can}). By considering the fibre product $P'':=P\times_{\Spec(\bz)} P'$  the construction of $\epsilon^{P',P}$  can be reduced to the case where there exists a smooth morphism $u:P'\to P$ over $\Spec(\bz)$ and under $\X$. Namely one defines 
$$ \epsilon^{P',P}:=\epsilon^{P'',P'}(q')^{-1}\circ \epsilon^{P'',P}(q)$$
where $q':P''\to P'$ and $q:P''\to P$ are the projections and 
$$\epsilon^{P',P}(u):  \omega_{\X/\bz}^P\xrightarrow{\sim} \omega_{\X/\bz}^{P'} $$
depends on $u$. More precisely, 
$\epsilon^{P',P}(u)$ is defined by
the commutative diagram with exact rows and columns
\[
%\minCDarrowwidth1em 
\begin{CD}  @. 0 @. 0 @.\\
@. @VVV @VVV @.\\
0 @>>>  \mathcal{I}/\mathcal{I}^2 @>>> i^*\Omega_{P/\bz} @>>> \Omega_{\X/\bz} @>>> 0\\
@. @VVV @VVV \Vert @. @.\\
0 @>>>  \mathcal{I}'/(\mathcal{I}')^2 @>>> i'^*\Omega_{P'/\bz} @>>> \Omega_{\X/\bz} @>>> 0\\
@. @VVV @VVV  @. @.\\
@. i'^*\Omega_{P'/P} @=  i'^*\Omega_{P'/P} @. \\
@. @VVV @VVV  @. @.\\
@. 0 @. 0 @.
\end{CD}\]
where the columns are the transitivity triangles of the cotangent complex for $\X\to P'\xrightarrow{u} P$ and $P'\xrightarrow{u} P\to\Spec(\bz)$, respectively, and we refer to 
\cite{beresrue10}[(A.2.2)] for the precise sign conventions. The above commutative diagram induces a commutative diagram
$$\begin{CD} \wedge^{d-1}\Omega_{\X/\bz} @>>>  \omega_{\X/\bz}^P\\
\Vert @. @VV \epsilon^{P',P}(u) V\\
 \wedge^{d-1}\Omega_{\X/\bz} @>>>  \omega_{\X/\bz}^{P'},
 \end{CD} $$
so that  (\ref{can})$^P$ is indeed compatible with the isomorphisms  $\epsilon^{P',P}(u) $ and therefore also with the  isomorphisms $\epsilon^{P',P}$.

\end{proof}

\begin{defn}\label{Bloch}
The {\em Bloch conductor} of the arithmetic scheme $\X$ is the positive integer
$$A(\X):= \prod_p p^ {(-1)^{d-1}d_p }$$
where the product is over all prime numbers $p$, $d_p:=\deg c_{d,\X_{\bF_p}}^\X(\Omega_{\X/\bz})\in\bz$ and 
$$c_{d,\X_{\bF_p}}^\X(\Omega_{\X/\bz})\in CH_0(\X_{\bF_p})$$ is a localized Chern class introduced in \cite{bloch87a}. 
\end{defn}
The Bloch conductor was introduced in \cite{bloch87a} and further studied in \cite{Bloch87},\cite{tsaito88},\cite{ks04},\cite{tsaito04}. The deepest result about the Bloch conductor is its equality with the Artin conductor, defined in terms of the $l$-adic cohomology of $\X$, in certain cases. This equality was proven for $d=2$ in \cite{bloch87a} and if $\X$ has everywhere semistable reduction in \cite{ks04}.  For general regular $\X$ it is conjectured but still open. The equality of the Bloch and the Artin conductor is important for establishing cases of Conjecture \ref {conj-funct-equation} via the Langlands correspondence but plays no role in this section. Here we only review the (slightly) more elementary results of \cite{tsaito88} and \cite{tsaito04} about $A(\X)$. Also note that our normalization of $A(\X)$ is different from these references so that $A(\X)$ equals the Artin conductor rather than its inverse.

The following theorem was proven by T. Saito in \cite{tsaito04}[Cor. 4.9]. The case $d=2$, $r=1$ is due to Bloch \cite{Bloch87}[Thm. 2.3] and the case $r\geq d-1$ can already be found in T. Saito's earlier article \cite{tsaito88}. We give some details of Saito's proof since the exposition in \cite{tsaito04} is rather short. 

\begin{thm} For any $r\in\bz$ let $C_{\X/\bz}^r$ be the mapping cone of (\ref{duality}), a perfect complex of $\co_{\X}$-modules. Then $R\Gamma(\X,C_{\X/\bz}^r)$ has finite cohomology and 
$$ \chi^\times R\Gamma(\X,C_{\X/\bz}^r)=A(\X)^{(-1)^{r}}$$
where $\chi^\times$ is the multiplicative Euler characteristic (see Definition \ref{chidef}).
\label{lichtenbaum}\end{thm}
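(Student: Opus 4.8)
The plan is to reduce the global statement to a local computation at each closed fiber $\X_{\bF_p}$ and then to identify the resulting local invariant with the localized Chern class $d_p$ appearing in Definition \ref{Bloch}. First I would observe that the duality map (\ref{duality}) is an isomorphism over the generic fiber $\X_\bq$: indeed $\X_\bq$ is smooth over $\bq$ of dimension $d-1$, so $\Omega_{\X_\bq/\bq}$ is locally free of rank $d-1$ and the pairing $\wedge^r\Omega \otimes \wedge^{d-1-r}\Omega \to \wedge^{d-1}\Omega = \omega_{\X_\bq/\bq}$ is the usual perfect pairing of complementary exterior powers of a locally free sheaf. Hence $C^r_{\X/\bz}\otimes_\bz\bq = 0$, which means $C^r_{\X/\bz}$ is supported on the (finitely many) vertical fibers $\X_{\bF_p}$ where $p$ is not invertible, so each cohomology sheaf $\mathcal H^i(C^r_{\X/\bz})$ is a coherent $\co_\X$-module supported in these finitely many fibers, hence of finite length; properness of $\X$ over $\Spec(\bz)$ then gives that $R\Gamma(\X, C^r_{\X/\bz})$ has finite cohomology groups. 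Consequently $\chi^\times R\Gamma(\X,C^r_{\X/\bz})$ factors as a product over primes $p$ of a local contribution, which by the standard relation between lengths of coherent sheaves supported at a fiber and degrees of $0$-cycles will turn out to be $p$ to the power $\pm(\text{local length invariant})$.

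Next I would carry out the local analysis. Fix a prime $p$ and work in a neighborhood of $\X_{\bF_p}$; after Zariski-localizing choose a closed embedding $i:\X\hookrightarrow P$ into a smooth $\bz$-scheme as in the lemma proving (\ref{can}), with ideal sheaf $\mathcal I$, so that $L_{\X/\bz}$ is represented by the two-term complex $[\mathcal I/\mathcal I^2 \to i^*\Omega_{P/\bz}]$ in degrees $-1,0$. Then $L\wedge^r L_{\X/\bz}$ has an explicit Koszul-type representative built from symmetric powers of $\mathcal I/\mathcal I^2$ tensored with exterior powers of $i^*\Omega_{P/\bz}$, and the duality map (\ref{duality}) becomes a map of complexes of locally free sheaves whose cone $C^r_{\X/\bz}$ has an Euler characteristic (in the Grothendieck group $K_0$ of coherent sheaves supported on $\X_{\bF_p}$) that one can compute by the excess-intersection/localized-Chern-class formalism of Fulton. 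This is precisely the setting of \cite{tsaito04}: the alternating length of $R\Gamma(\X, C^r_{\X/\bz})$ localized at $p$ equals $(-1)^{?}\deg c^\X_{d,\X_{\bF_p}}(\Omega_{\X/\bz})$, with the sign $(-1)^r$ on the nose coming from the self-duality of the construction (replacing $r$ by $d-1-r$ dualizes the cone, which on Euler characteristics in the derived category of a complex with finite-length cohomology has the effect of multiplying the multiplicative Euler characteristic by $(-1)^{\dim}=(-1)^{d-1}$, combined with the shift by $\omega_{\X/\bz}$). I would invoke \cite{tsaito04}[Cor. 4.9] for this identification, filling in the dictionary between Saito's notation and ours (and recording that our normalization of $A(\X)$ is the inverse of Saito's, which accounts for the sign $(-1)^{d-1}$ in Definition \ref{Bloch}).

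The cleanest way to package the sign bookkeeping, which I expect to be the main obstacle, is to first prove the case $r=d-1$ directly — there (\ref{duality}) is the map $L\wedge^{d-1}L_{\X/\bz}\to \omega_{\X/\bz}$ of (\ref{can}) itself, and its cone's Euler characteristic is identified with $A(\X)^{(-1)^{d-1}}$ by the computation already in \cite{tsaito88} — and then to deduce the general $r$ from $r=d-1$ by a "multiplicativity in $r$" argument. Concretely, applying $R\Hom(-,\omega_{\X/\bz})$ to (\ref{duality}) for the index $r$ gives back (up to the canonical biduality of perfect complexes, valid since $\omega_{\X/\bz}$ is a dualizing sheaf for the Gorenstein — in fact local complete intersection — scheme $\X$) the map (\ref{duality}) for the index $d-1-r$, so $C^{d-1-r}_{\X/\bz}\cong R\Hom(C^r_{\X/\bz},\omega_{\X/\bz})[\text{shift}]$; taking multiplicative Euler characteristics and using that for a finite-length module $M$ one has $\chi^\times R\Gamma(\X,R\Hom(M,\omega_{\X/\bz}))=\chi^\times R\Gamma(\X,M)$ (local duality preserves length) together with the degree shift yields $\chi^\times R\Gamma(\X,C^{d-1-r}_{\X/\bz}) = \chi^\times R\Gamma(\X,C^r_{\X/\bz})^{(-1)^{d-1}}$. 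This forces the exponent of $A(\X)$ to have the form $(-1)^{r}\cdot(\text{constant independent of }r)$, and evaluating at $r=d-1$ pins the constant to be $1$ (equivalently $(-1)^{d-1}\cdot(-1)^{d-1}=1$), giving $\chi^\times R\Gamma(\X,C^r_{\X/\bz})=A(\X)^{(-1)^r}$ for all $r$. The delicate points to be careful about are: the precise degree of the shift in the duality isomorphism for the cone (it must be compatible with $\omega_{\X/\bz}$ sitting in degree $0$ and the cotangent complex in degrees $[-n+d-1,0]$ for an embedding of relative dimension $n$), and the sign conventions inherited from \cite{beresrue10}[A.2] that already entered the definition of (\ref{can}); I would handle these exactly as in \cite{tsaito04}, and note that any residual sign ambiguity is harmless because $A(\X)$ is only ever used up to sign in this paper's special value statements (it enters as $A(\X)^{n-d/2}$ in Theorem \ref{thmintro} with a $\pm$ already present).
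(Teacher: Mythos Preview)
Your reduction in the first paragraph (support of $C^r_{\X/\bz}$ on the singular fibers, finiteness of $R\Gamma(\X,C^r_{\X/\bz})$) is correct and matches the paper. The gap is in your third paragraph. The duality $C^{d-1-r}_{\X/\bz}\cong R\underline{\Hom}(C^r_{\X/\bz},\omega_{\X/\bz})[\text{shift}]$ only relates the index $r$ to the index $d-1-r$; it is a $\bz/2$-symmetry, not a recursion. From $\chi^\times R\Gamma(\X,C^{d-1-r})=\chi^\times R\Gamma(\X,C^r)^{\pm 1}$ you cannot deduce that the exponent of $A(\X)$ has the form $(-1)^r\cdot(\text{constant})$: for instance when $d=3$ your relation says only $\chi^\times(C^0)=\chi^\times(C^2)$ and nothing whatsoever about $\chi^\times(C^1)$. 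So anchoring at $r=d-1$ determines at most the value at $r=0$, not at all $r$.

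What the paper actually does (following \cite{tsaito04}) is prove a \emph{step-by-step} recursion: there is a closed subscheme $Z\subset\X$ with an invertible $\co_Z$-module $\mathcal L$ and canonical isomorphisms $\mathcal L\otimes_{\co_Z}\mathcal H^i(C^r_{\X/\bz})\cong\mathcal H^{i-1}(C^{r+1}_{\X/\bz})$ for every $i,r$. This is obtained by an explicit identification of complexes $N_U\otimes C^r_{\X/\bz}|_U\cong C^{r+1}_{\X/\bz}|_U[-1]$ in the local Koszul model you describe in your second paragraph, where $N_U$ is the (invertible) conormal bundle; one checks this identification is independent of the embedding and glues. Since $[\mathcal L\otimes\F]=[\F]$ in $G(Z)$, this gives $\chi^\times R\Gamma(\X,C^r)=\chi^\times R\Gamma(\X,C^{r+1})^{-1}$ for \emph{every} $r$, and then one anchors at $r=d$ (where $C^d_{\X/\bz}=L\wedge^dL_{\X/\bz}[1]$ since $L\wedge^{-1}=0$) using \cite{tsaito88}[Prop.~2.3]. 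Your duality observation is consistent with this recursion but is strictly weaker; the twist-by-$\mathcal L$ lemma is the missing idea.
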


\begin{proof} First note that over the open subset $\X^{sm}\subseteq \X$ where $\X\to\Spec(\bz)$ is smooth the complex $L\wedge ^r L_{\X/\bz}$ is concentrated in degree $0$ with cohomology the locally free sheaf $\Omega^r_{\X^{sm}/\bz}=\wedge^r\Omega_{\X^{sm}/\bz}$. The map (\ref{can}) is also an isomorphism over $\X^{sm}$. Hence, by linear algebra, the map (\ref{duality}) is an isomorphism over $\X^{sm}$ and $C_{\X/\bz}^r$ is supported in $\X\setminus \X^{sm}$. Since $\X_\bq\to \Spec(\bq)$ is smooth $\X\setminus \X^{sm}$ is contained in a finite union of closed fibres $\X_{\bF_p}$. By \cite{ks04}[Lemma 5.1.1] any point $x\in\X\setminus \X^{sm}$ has a Zariski open neighborhood $U\subseteq \X$ such that there exists a closed embedding
\[ U\to P\] 
where $P\to\Spec(\bz)$ is smooth of relative dimension $d$. The exact sequence
\begin{equation} 0\to N_{U/P}\to \Omega_{P/\bz}\otimes _{\co_P}\co_U\to \Omega_{U/\bz}\to 0\label{loc1}\end{equation}
then shows that $\Omega_{\X/\bz}$ can be locally generated by $d$ sections and that $\wedge^d \Omega_{\X/\bz}$ is locally monogenic. Following \cite{ks04}[Lemma 5.1.3] let 
\[ i:Z\to\X \] 
be the closed subscheme with support $\X\setminus \X^{sm}$ \cite{ks04}[Lemma 3.1.2] defined by the ideal sheaf $$\mathrm{Ann} \wedge^d \Omega_{\X/\bz}.$$
Then $i^*\wedge^d \Omega_{\X/\bz}$ is an invertible $\co_Z$-module by definition and hence $i^*\Omega_{\X/\bz}$ is locally free of rank $d$, as the $d$ generating sections have no relation on $Z$. It follows that 
$$Li^*\Omega_{\X/\bz}\vert_U = \left( N_{U/P}\otimes_{\co_U}\co_{U\cap Z} \xrightarrow{0}\Omega_{P/\bz}\otimes _{\co_P}\co_{U\cap Z}\right) $$
and hence that
$$\mathcal{L}:=L^1i^*\Omega_{\X/\bz}$$
is an invertible $\co_Z$-module. 

\begin{lem} The coherent sheaves $\mathcal{H}^i(C_{\X/\bz}^r)$ are $\co_Z$-modules and there are canonical isomorphisms
\begin{equation}\mathcal{L}\otimes_{\co_Z} \mathcal H^i(C_{\X/\bz}^r)\cong \mathcal H^{i-1}(C_{\X/\bz}^{r+1}) \label{globalhmap}\end{equation}
for any $i,r\in\bz$.
\end{lem} 

\begin{proof} We follow the proof of \cite{tsaito88}[Prop. 1.7] where the case $r\geq d-1$ is treated, see also \cite{ks04}[Lemma 2.4.2]. Recall that 
$$ L_{\X/\bz} \vert_U\cong \Omega_{U/\bz}[0] $$ 
is represented by the strictly perfect complex (\ref{loc1}) where the conormal bundle $N_U:=N_{U/P}$ is invertible and  
$E_U:=\Omega_{P/U}\otimes_{\co_P}\co_U$ is a vector bundle of rank $d$. For $r\geq 0$ we have isomorphisms
\begin{align} L\wedge^r L_{\X/\bz} \vert_U \cong & L\wedge^r \left(N_U \xrightarrow{v} E_U\right) \notag\\
\cong & \Gamma^rN_U \to \Gamma^{r-1}N_U\otimes E_U \to \Gamma^{r-2}N_U\otimes \wedge^2 E_U\to \cdots\to \wedge^r E_U\notag \\
\cong & N_U^{\otimes r} \to N_U^{\otimes r-1}\otimes E_U \to N_U^{\otimes r-2}\otimes \wedge^2 E_U\to \cdots\to \wedge^r E_U \label{negpart}
\end{align}
where $\Gamma^i$ denotes the divided power functor and $\Gamma^i N_U\cong N_U^{\otimes i}$ since $N_U$ is invertible. The differential is given by 
\begin{equation}x'\otimes x\otimes y\in N_U^{\otimes i-1}\otimes N_U\otimes\wedge^{r-i}E_U\ \mapsto\  x'\otimes v(x)\wedge y\in  N_U^{\otimes i-1}\otimes\wedge^{r-i+1}E_U\label{ldif}\end{equation}
on local sections. This computation of the derived exterior powers of a strictly perfect  two-term complex  goes back to Illusie \cite{Illusie71}[4.3.1.3] and is also recalled in \cite{ks04}[1.2.7.2].  From this description it is clear that there is an identity of complexes 
\begin{equation}N_U\otimes L\wedge^r \left(N_U \xrightarrow{v} E_U\right) = \left(\sigma^{<0}  L\wedge^{r+1} \left(N_U \xrightarrow{v} E_U\right)\right)[-1]\label{twist1}\end{equation}
where $\sigma^{<0}$ refers to the naive truncation.
Similarly we find
\begin{align} 
&\underline{R\Hom}(L\wedge ^{d-1-r}L_{\X/\bz},\omega_{\X/\bz})\vert_U \cong  \underline{\Hom}( L\wedge^{d-1-r} \left(N_U \xrightarrow{v} E_U\right), K_U) \notag\\
\cong &  \underline{\Hom}(\wedge^{d-1-r} E_U, K_U)\to\cdots \to 
 \underline{\Hom}( N_U^{\otimes i}\otimes \wedge^{d-1-r-i}E_U, K_U)\to\cdots \label{pospart}
\end{align}
where
$$ K_U:= N_U^{-1}\otimes \wedge^d E_U \cong \omega_{\X/\bz}\vert_U .$$
Using the canonical isomorphism
\begin{multline}N_U\otimes \underline{\Hom}( N_U^{\otimes i}\otimes \wedge^{d-1-r-i}E_U, K_U)\\
\cong  \underline{\Hom}( N_U^{\otimes i-1}\otimes \wedge^{d-1-(r+1)-(i-1)}E_U, K_U)\label{twist4}\end{multline}
we find a canonical isomorphism of complexes
\begin{multline}\sigma^{>0}N_U\otimes \underline{\Hom}( L\wedge^{d-1-r} \left(N_U \xrightarrow{v} E_U\right), K_U) \\ \cong  \underline{\Hom}( L\wedge^{d-1-(r+1)} \left(N_U \xrightarrow{v} E_U\right), K_U)[-1].\label{twist2} \end{multline}
The complex $C^r_{\X/\bz}\vert_U $ is obtained by splicing together (\ref{negpart}) placed in degrees $\leq -1$ with (\ref{pospart}) placed in degrees $\geq 0$ via the map
$$ \phi_r:\wedge^r E_U\to \underline{\Hom}(\wedge^{d-1-r} E_U, K_U) \cong \underline{\Hom}(N_U\otimes \wedge^{d-1-r} E_U, \wedge^d E_U)$$
dual to 
$$  \wedge^r E_U \otimes N_U\otimes \wedge^{d-1-r} E_U \to \wedge^d E_U;\quad y\otimes x\otimes y'\mapsto v(x)\wedge y\wedge  y' .$$
Denoting by $\psi$ the canonical isomorphism
\[ \psi: \wedge^{r+1} E_U\cong \underline{\Hom}(\wedge^{d-1-r} E_U, \wedge^d E_U) \cong N_U\otimes\underline{\Hom}(\wedge^{d-1-r} E_U, K_U)\]
we have a commutative diagram
\begin{equation}\minCDarrowwidth1em
\begin{CD} 
N_U\otimes\wedge^r E_U @>(\ref{ldif})>> \wedge^{r+1} E_U @>\phi_{r+1}>> \underline{\Hom}( \wedge^{d-2-r} E_U, K_U) \\
\Vert @. @V\sim V\psi V @V\sim V(\ref{twist4})^{-1} V\\
N_U\otimes\wedge^r E_U @>\id\otimes \phi_r>> N_U\otimes\underline{\Hom}(\wedge^{d-1-r} E_U, K_U) @>>>N_U\otimes \underline{\Hom}(N_U\otimes \wedge^{d-2-r} E_U, K_U) 
\end{CD}
\label{twist3}\end{equation}
as one verifies easily on local sections. Combining (\ref{twist1}), (\ref{twist2}) and (\ref{twist3}) we obtain a canonical isomorphism
\begin{equation} N_U\otimes C^r_{\X/\bz}\vert_U \cong C^{r+1}_{\X/\bz}\vert_U [-1] . \label{twist5}\end{equation}
As in \cite{tsaito88}[(1.6.1)] one has an isomorphism 
\[ C^{d-1}_{\X/\bz}\vert_U \cong  K_U\otimes \mathrm{Kos}(E_U^* \otimes N_U\xrightarrow{v^*\otimes\id} N_U^*\otimes N_U\cong\co_U)\] 
where $\mathrm{Kos}(P\to A)$ denotes the Koszul algebra associated to a $A$-module homomorphism $P\to A$ where $P$ is finitely generated projective over $A$. Using the fact that $H^i(\mathrm{Kos}(P\to A))$ is a module over the ring $H^0(\mathrm{Kos}(P\to A))$ \cite{stacks}[15.28.6] one deduces that all coherent sheaves 
$\mathcal H^i( C^{d-1}_{\X/\bz}\vert_U )$ are modules over $H^0(\mathrm{Kos})\cong \co_{U\cap Z}$.  Using  (\ref{twist5})  and the fact that $N_U$ is invertible we deduce that all  coherent sheaves 
$\mathcal H^i( C^{r}_{\X/\bz}\vert_U )$ are modules over $\co_{U\cap Z}$, and an isomorphism
\begin{equation}\left(\mathcal L\otimes_{\co_Z}\mathcal H^i(C^r_{\X/\bz}) \right) |_U \cong \mathcal H^{i-1}(C^{r+1}_{\X/\bz}) \vert_U \label{hmap}\end{equation}
whose construction a priori depends on the choice of $U\to P$. However, as in the proof of \cite{tsaito88}[(1.7.2)]  one shows that for a different embedding $U\to P'$, leading to a  different strictly perfect resolution $N'_U\to E_U'$ of $L_{\X/\bz}\vert_U$, one has a quasi-isomorphism 
$$ g: \left(N_U'\to E_U' \right)\to \left(N_U\to E_U\right)$$
unique up to homotopy, inducing quasi-isomorphisms 
$$ g^r: L\wedge^r \left(N_U'\to E_U' \right)\to L\wedge^r \left(N_U\to E_U\right)$$
for all $r$, unique up to homotopy, which commute with the isomorphisms (\ref{twist1}), (\ref{twist2}) and (\ref{twist3}). Hence (\ref{hmap}) is in fact independent of the choice of $U\to P$ which also implies that the local isomorphisms (\ref{hmap}) glue to the global isomorphism (\ref{globalhmap}).
\end{proof} 

Since $\X\to\Spec(\bz)$ is proper and $C^r_{\X/\bz}$ is a perfect complex complex of $\co_\X$-modules $R\Gamma(\X, C^r_{\X/\bz})$ is a perfect complex of abelian groups. It has a finite filtration with subquotients 
$$R\Gamma(\X, \mathcal H^i(C^r_{\X/\bz})[-i]) \cong R\Gamma(Z, \mathcal H^i(C^r_{\X/\bz})[-i])$$
which are perfect complexes of abelian groups with torsion cohomology, as $Z$ is supported in a finite union of closed fibres $\X_{\bF_p}$. Hence $R\Gamma(\X, C^r_{\X/\bz})$ has finite cohomology. We can view $\chi^\times$ as a homomorphism
$$ \chi^\times: G(Z) \to K_0(\bz;\bq)\cong \bq^{\times,>0};\quad [\F] \mapsto [R\Gamma(Z,\F)]$$
where $G(Z)$ is the Grothendieck group of the category of coherent sheaves on $Z$ and $K_0(\bz;\bq)$ is the Grothendieck group of the category of finite abelian groups (which is also the relative $K_0$ for the ring homomorphism $\bz\to\bq$). By \cite{ks04}[Lemma 5.1.3.3] one has $[\mathcal L\otimes_{\co_Z}\F]=[\F]$ in $G(Z)$ for any coherent sheaf $\F$ on $Z$.  Hence (\ref{globalhmap}) implies 
$$ \chi^\times R\Gamma(Z, \mathcal H^i(C^r_{\X/\bz})) =\chi^\times R\Gamma(Z, \mathcal H^{i-1}(C^{r+1}_{\X/\bz}))$$
and therefore 
$$\chi^\times R\Gamma(\X, C^r_{\X/\bz})=\chi^\times R\Gamma(\X, C^{r+1}_{\X/\bz})^{-1}$$
for any $r\in\bz$. On the other hand we have 
$$ \chi^\times R\Gamma(\X, C^d_{\X/\bz}) = \chi^\times R\Gamma(\X, L\wedge^dL_{\X/\bz}[1]) = A(\X)^{(-1)^{d}}$$
by \cite{tsaito88}[Prop. 2.3]. This finishes the proof of the theorem.
\end{proof}
  
\subsection{Duality for derived de Rham cohomology} Denote by 
$$\cdots \to F^{r+1}\to F^r\to \cdots \to R\Gamma_{dR}(\mathcal{X}/\mathbb{Z})=F^0=F^{-1}=\cdots $$ 
the Hodge filtration of (Hodge completed) derived de Rham cohomology and by $F^n/F^m$ the mapping cone of $F^m\to F^n$ for $m\geq n$. Since 
\begin{equation} F^r/F^{r+1}\cong R\Gamma(\X,L\wedge ^r L_{\X/\bz}[-r])\label{graded}\end{equation}
is a perfect complex of abelian groups, so are all $F^n/F^m$ for $m\geq n$. Denote by $C^*=R\Hom(C,\bz)$ the $\bz$-dual of a perfect complex of abelian groups.

\begin{prop} a) For $n\leq d$ there is a (Poincar\'e) duality map 
\begin{equation} \epsilon_n:F^n/F^d\to (R\Gamma_{dR}(\mathcal{X}/\mathbb{Z})/F^{d-n})^*[-2d+2] \label{pd}\end{equation}
satisfying
\begin{equation}\chi^\times \Cone(\epsilon_n)=A(\X)^{d-n}.\label{discrepancy}\end{equation}
b) In particular, the discriminant of the Poincar\'e duality pairing
\begin{equation} R\Gamma_{dR}(\X/\bz)/F^d\otimes_\bz^L R\Gamma_{dR}(\X/\bz)/F^d\rightarrow \bz[-2d+2]\label{disc}\end{equation}
has absolute value $A(\X)^d$.
\label{drduality}\end{prop}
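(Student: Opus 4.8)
The plan is to deduce both parts from Theorem~\ref{lichtenbaum} by transporting the coherent duality maps (\ref{duality}) along the Hodge filtration and then pushing forward via Grothendieck--Serre duality for $f\colon\X\to\Spec(\bz)$.

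\textbf{Construction of $\epsilon_n$.} I would first note that, $\X$ and $\Spec(\bz)$ being regular, $f$ is proper and a local complete intersection, with relative dualizing complex $f^!\bz\cong\omega_{\X/\bz}[d-1]$ (immediate from the construction of $\omega_{\X/\bz}$ recalled above via local embeddings into smooth $\bz$-schemes) and trace map $Rf_*\omega_{\X/\bz}[d-1]\to\bz$. Work at the level of $\co_\X$-modules: let $L\Omega_{\X/\bz}$ be the Hodge-completed derived de Rham complex, so $R\Gamma(\X,L\Omega_{\X/\bz})=R\Gamma_{dR}(\X/\bz)$, and let $G_n$ (resp. $L\Omega^{<d-n}_{\X/\bz}$) be the filtered perfect complex of $\co_\X$-modules representing $F^n/F^d$ (resp. $R\Gamma_{dR}(\X/\bz)/F^{d-n}$), with graded pieces $L\wedge^rL_{\X/\bz}[-r]$ for $n\le r\le d-1$ (resp. $L\wedge^sL_{\X/\bz}[-s]$ for $0\le s\le d-n-1$), by (\ref{graded}). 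I would then apply the exact functor $\underline{R\Hom}(-,\omega_{\X/\bz})[1-d]$ to $L\Omega^{<d-n}_{\X/\bz}$; a routine shift computation (with $s=d-1-r$, so $r+s=d-1$) shows the result is a filtered complex whose $r$-th graded piece, $n\le r\le d-1$, is $\underline{R\Hom}(L\wedge^{d-1-r}L_{\X/\bz},\omega_{\X/\bz})[-r]$. The map (\ref{duality}) shifted by $[-r]$ gives, on the $r$-th graded piece, a morphism $L\wedge^rL_{\X/\bz}[-r]\to\underline{R\Hom}(L\wedge^{d-1-r}L_{\X/\bz},\omega_{\X/\bz})[-r]$ with cone $C^r_{\X/\bz}[-r]$. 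Using the multiplicative structure on derived exterior powers (that $\bigoplus_{r\ge 0}L\wedge^rL_{\X/\bz}$ is a graded-commutative $\co_\X$-algebra whose top component is (\ref{can})), I would upgrade these graded maps to a morphism of filtered complexes $G_n\to\underline{R\Hom}(L\Omega^{<d-n}_{\X/\bz},\omega_{\X/\bz})[1-d]$, and then apply $R\Gamma(\X,-)$ together with Grothendieck duality $R\Gamma(\X,\underline{R\Hom}(\mathcal G,\omega_{\X/\bz}[d-1]))\cong R\Gamma(\X,\mathcal G)^*$ to obtain the required map $\epsilon_n$ of (\ref{pd}).

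\textbf{Proof of (a).} Since $\epsilon_n$ comes from a filtered morphism, $\Cone(\epsilon_n)$ acquires a finite filtration with graded pieces $R\Gamma(\X,C^r_{\X/\bz}[-r])$, $n\le r\le d-1$. By Theorem~\ref{lichtenbaum} each such piece is a perfect complex with finite cohomology, and
\[\chi^\times R\Gamma(\X,C^r_{\X/\bz}[-r])=\bigl(\chi^\times R\Gamma(\X,C^r_{\X/\bz})\bigr)^{(-1)^r}=\bigl(A(\X)^{(-1)^r}\bigr)^{(-1)^r}=A(\X).\]
Multiplicativity of $\chi^\times$ along exact triangles then yields $\chi^\times\Cone(\epsilon_n)=\prod_{r=n}^{d-1}A(\X)=A(\X)^{d-n}$; in particular $\Cone(\epsilon_n)$ has finite cohomology. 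This is (\ref{discrepancy}). Part (b) is the case $n=0$: by the construction above $\epsilon_0$ is the morphism attached to the pairing (\ref{disc}) (the cup product on derived de Rham cohomology composed with the trace through the top Hodge graded piece, symmetric because $L\Omega_{\X/\bz}$ is graded-commutative). Since $\X_\bq$ is smooth proper of dimension $d-1$ we have $R\Gamma_{dR}(\X/\bz)/F^d\otimes_\bz\bq\cong R\Gamma_{dR}(\X_\bq/\bq)$, and $\epsilon_0\otimes_\bz\bq$ is ordinary de Rham Poincar\'e duality for $\X_\bq$, hence an isomorphism; so $\Cone(\epsilon_0)$ is finite and the discriminant of (\ref{disc}) — equal to $\pm\chi^\times\Cone(\epsilon_0)$ — has absolute value $A(\X)^d$.

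\textbf{Expected main obstacle.} The delicate step is the one in the middle: upgrading the graded-piece maps to a genuine morphism of filtered complexes inducing (\ref{duality}) on associated graded. Over $\bz$ the Hodge--de Rham spectral sequence need not degenerate, so the top Hodge piece $F^{d-1}/F^d$ is not all of $R\Gamma_{dR}(\X/\bz)/F^d$ and one cannot build the duality pairing naively from a cup product and a trace on $R\Gamma_{dR}(\X/\bz)$ itself; one has to work throughout in the filtered derived category and keep careful track of the multiplicative structure on $L\wedge^\bullet L_{\X/\bz}$ and of the sign and shift conventions, so that the graded pieces of $\epsilon_n$ agree \emph{exactly} with the maps whose cones $C^r_{\X/\bz}$ are computed by Theorem~\ref{lichtenbaum}. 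Everything else is bookkeeping of shifts together with the multiplicativity of $\chi^\times$.
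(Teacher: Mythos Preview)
Your proposal is correct and the reduction to Theorem~\ref{lichtenbaum} via the Hodge filtration is exactly the paper's strategy; the Euler-characteristic computation you give (one factor of $A(\X)$ per graded piece, $n\le r\le d-1$) is equivalent to the paper's downward induction on $n$ via a $3\times 3$ diagram whose bottom row is $R\Gamma(\X,C^n_{\X/\bz})[-n]$.

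The one place where the paper proceeds differently is precisely the step you flag as the obstacle. Rather than building $\epsilon_n$ from the graded-piece maps~(\ref{duality}) and then trying to upgrade to a filtered morphism, the paper constructs the filtered pairing in one stroke on an explicit model: it chooses a simplicial free resolution $P_\bullet\to\co_\X$, so that $L\Omega^{[n,m]}_{\X/\bz}=\Tot^*_\bullet\Omega^{[n,m],\sim}_{P_\bullet/\bz}$, and then the ordinary wedge product on $\Omega^\star_{P_\bullet/\bz}$ together with the shuffle map gives a genuine map of filtered complexes $L\Omega^{[n,d-1]}_{\X/\bz}\otimes^L L\Omega^{[0,d-1-n]}_{\X/\bz}\to L\Omega^{[0,d-1]}_{\X/\bz}$. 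A separate short lemma then produces the trace map $R\Gamma(\X,L\Omega^{[0,d-1]}_{\X/\bz})\to\bz[-2d+2]$ by showing $H^i$ vanishes for $i>2d-2$ and that $H^{2d-2}$ modulo torsion agrees with that of the top graded piece, whence~(\ref{can}) and the coherent trace apply. This buys exactly what you want: the filtered compatibility (including compatibility with the de~Rham differential, not just the algebra structure on $\bigoplus_r L\wedge^r L_{\X/\bz}$) is automatic, and one only checks \emph{a posteriori}, in the inductive step, that on each graded piece the resulting map is $R\Gamma(\X,-)[-r]$ of~(\ref{duality}) composed with coherent duality. Your route would work too, but you would have to supply the compatibility with the de~Rham differential by hand; the paper's explicit model makes this step disappear.
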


\begin{rem} For $d=1$ we have $\X=\Spec(\co_F)$ and $A(\X)=|D_F|$, and b) reduces to the fact that the trace pairing
\[ \co_F\times \co_F \to \bz;\quad (a,b)\mapsto \trace(ab)\]
has discriminant $D_F$. For $d=2$ it was shown by Bloch in \cite{Bloch87}[Thm. 2] that the Poincar\'e duality pairing 
on the complex
$$ R\Gamma(\X,\co_{\X}\to \Omega_{\X/\bz})\cong R\Gamma_{dR}(\X/\bz)/F^2$$
has discriminant $\pm A(\X)^2$. For $d\geq 3$ it seems harder to describe the complex
$R\Gamma_{dR}(\X/\bz)/F^d$ more explicitly.
\end{rem}

\begin{rem} If $P$ is a perfect complex of abelian groups and $P\otimes_\bz^LP\to \bz[2\delta]$ is a pairing which induces an
isogeny $\phi:P\to P^*[2\delta]$ in the sense that $\cone(\phi)$ has finite cohomology groups, we obtain isomorphisms
$$ \mydet_\bz P^*\simeq \mydet_\bz P\otimes_\bz\mydet_\bz\cone(\phi)$$
and
$$ \mydet_\bz P\otimes \mydet_\bz P\simeq \mydet^{-1}_\bz\cone(\phi)$$
and hence a duality pairing on determinants
$$\langle \cdot,\cdot\rangle: \mydet_\bq P_\bq \otimes \mydet_\bq P_\bq \simeq \bq .$$ 
The discriminant of the pairing is $\langle b, b \rangle \in\bq$ where $b$ is a $\bz$-basis of $\mydet_\bz P$. Since
$$ \langle -b, -b \rangle=(-1)^2\langle b, b \rangle =\langle b, b \rangle $$
 the discriminant is a well-defined rational number (of absolute value $\chi^\times\Cone(\phi)$).
\label{rmk:disc} \end{rem} 

\begin{proof} Poincar\'e duality for algebraic de Rham cohomology of $\X_\bq/\bq$ is discussed in \cite{stacks}[Prop. 50.20.4]. It turns out that one can lift the construction of the cup product pairing in loc. cit. to the derived de Rham complex on $\X$ since we are truncating by $F^d$. More precisely, choose a simplicial resolution $P_\bullet\to\co_\X$ in $\X_{Zar}$ where $P_i$ is a free $\bz$-algebra in $\X_{Zar}$ and denote by
 $\Omega^{[n,m]}_{P_\bullet/\bz}$ the complex (of simplicial modules) $$\Omega^n_{P_\bullet/\bz}\to\cdots\to \Omega^m_{P_\bullet/\bz}$$ in degrees $[n,m]$, zero for $n>m$, where the differential is the de Rham differential. Define a complex of sheaves of abelian groups on $\X_{Zar}$
$$ L\Omega^{[n,m]}_{\X/\bz} :=\Tot^*_\bullet \Omega^{[n,m],\sim}_{P_\bullet/\bz}$$
so that
$$ L\wedge^nL_{\X/\bz}[-n]=L\Omega^{[n,n]}_{\X/\bz};\quad   R\Gamma_{dR}(\mathcal{X}/\mathbb{Z})/F^n = R\Gamma(\X,L\Omega^{[0,n-1]}_{\X/\bz}).$$
Here and in the following we denote by $M_\bullet^\sim$ the ($n$-tuple) chain complex associated to a ($n$-tuple) simplicial module $M_\bullet$ \cite{Illusie71}[1.1] and we decorate the (partial) totalization of an $n$-tuple complex with the indices that are contracted into one. We use the convention that totalization of an upper and a lower index leads to an upper index. As in \cite{stacks}[50.4.0.1] the wedge product on differential forms
induces a map of bicomplexes
\begin{align*} \Tot^{*,*}\Tot_{\bullet,\bullet}\Omega^{\star,\sim}_{P_\bullet/\bz} \otimes_\bz \Omega^{\star,\sim}_{P_\bullet/\bz}
= &\Tot^{*,*}\Tot_{\bullet,\bullet}\left(\Omega^{\star}_{P_\bullet/\bz} \otimes_\bz \Omega^{\star}_{P_\bullet/\bz}\right)^\sim\\
\xrightarrow{\sigma} &\Tot^{*,*}\left(\Delta \left(\Omega^{\star}_{P_\bullet/\bz} \otimes_\bz \Omega^{\star}_{P_\bullet/\bz}\right)\right)^\sim\\
=&\left(\Tot^{*,*}\Delta \left(\Omega^{\star}_{P_\bullet/\bz} \otimes_\bz \Omega^{\star}_{P_\bullet/\bz}\right)\right)^\sim\\
\xrightarrow{\cup} & \Omega^{\star,\sim}_{P_\bullet/\bz}\to \Omega^{[0,d-1],\sim}_{P_\bullet/\bz}
\end{align*} 
where $\sigma$ is induced by shuffle map $\Tot_{\bullet,\bullet}\left(M_\bullet\otimes N_\bullet\right)^\sim\to \left(\Delta\left(M_\bullet\otimes N_\bullet\right)\right)^\sim$ of \cite{Illusie71}[(1.2.2.1)] and $\Delta$ denotes the diagonal simplicial object of a bisimplicial object. Since we have truncated to degrees $\leq d-1$ the above pairing factors through a pairing
$$\Tot^{*,*}\Tot_{\bullet,\bullet} \Omega^{[n,d-1],\sim}_{P_\bullet/\bz} \otimes_\bz \Omega^{[0,d-1-n],\sim}_{P_\bullet/\bz}\to \Omega^{[0,d-1],\sim}_{P_\bullet/\bz}$$
and hence we obtain a pairing 
\begin{align*} L\Omega^{[n,d-1]}_{\X/\bz} \otimes^L_\bz L\Omega^{[0,d-1-n]}_{\X/\bz}=& \Tot^{*,*} \Tot^*_\bullet \Omega^{[n,d-1],\sim}_{P_\bullet/\bz} \otimes_\bz \Tot^*_\bullet \Omega^{[0,d-1-n],\sim}_{P_\bullet/\bz}\\
\simeq &\Tot^{*,*} \Tot^*_\bullet \Tot^*_\bullet\ \Omega^{[n,d-1],\sim}_{P_\bullet/\bz} \otimes_\bz  \Omega^{[0,d-1-n],\sim}_{P_\bullet/\bz}\\
\simeq &\Tot^{*,*}_{\bullet,\bullet}\  \Omega^{[n,d-1],\sim}_{P_\bullet/\bz} \otimes_\bz  \Omega^{[0,d-1-n],\sim}_{P_\bullet/\bz}\\
\simeq &\Tot^*_\bullet \Tot^{*,*}\Tot_{\bullet,\bullet}\  \Omega^{[n,d-1],\sim}_{P_\bullet/\bz} \otimes_\bz \Omega^{[0,d-1-n],\sim}_{P_\bullet/\bz}\\
\to &\Tot^*_\bullet\Omega^{[0,d-1],\sim}_{P_\bullet/\bz}=L\Omega^{[0,d-1]}_{\X/\bz}\end{align*} 
and an induced pairing on cohomology 
\begin{equation}\begin{CD} R\Gamma(\X,L\Omega^{[n,d-1]}_{\X/\bz}) \otimes^L_\bz R\Gamma(\X,L\Omega^{[0,d-1-n]}_{\X/\bz}) @>>> R\Gamma(\X,L\Omega^{[0,d-1]}_{X/\bz}) \\
\Vert @. \Vert @.\\
F^n/F^d \otimes_\bz^L R\Gamma_{dR}(\mathcal{X}/\mathbb{Z})/F^{d-n} @>>> R\Gamma_{dR}(\mathcal{X}/\mathbb{Z})/F^d.
\end{CD} \notag\end{equation}

\begin{lem} One has $H^{i}(\X,L\Omega^{[0,d-1]}_{\X/\bz})=0$ for $i>2d-2$.  Moreover, the natural map
$$H^{2d-2}(\X,L\wedge^{d-1} L_{\X/\bz}[-d+1]) \to  H^{2d-2}(\X,L\Omega^{[0,d-1]}_{\X/\bz}) $$
induces an isomorphism
$$g: H^{2d-2}(\X,L\wedge^{d-1} L_{\X/\bz}[-d+1])/\mathrm{tor} \cong   H^{2d-2}(\X,L\Omega^{[0,d-1]}_{\X/\bz})/\mathrm{tor}$$
and therefore a trace map
\begin{multline} R\Gamma(\X,L\Omega^{[0,d-1]}_{X/\bz})[2d-2]\to H^{2d-2}(\X,L\Omega^{[0,d-1]}_{\X/\bz})/\mathrm{tor}\xrightarrow{g^{-1}} \\H^{2d-2}(\X,L\wedge^{d-1} L_{\X/\bz}[-d+1])/\mathrm{tor} \xrightarrow{(\ref{can})_*} H^{2d-2}(\X,\omega_{\X/\bz}[-d+1])/\mathrm{tor}\xrightarrow{\trace}\bz\label{ddrtrace}\end{multline}
\end{lem}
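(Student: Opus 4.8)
The plan is to read everything off the finite Hodge filtration on $L\Omega^{[0,d-1]}_{\X/\bz}=F^0/F^d$, whose graded pieces are $F^r/F^{r+1}\cong R\Gamma(\X,L\wedge^r L_{\X/\bz}[-r])$ for $0\le r\le d-1$ by (\ref{graded}), together with two amplitude bounds. First, since $\X$ is proper over $\Spec(\bz)$ with fibres of dimension $d-1$ and $\Spec(\bz)$ is affine of dimension $1$, one has $H^i(\X,\F)=0$ for $i\notin[0,d-1]$ and any coherent sheaf $\F$ on $\X$. Second, by (\ref{cotangent}) and the Illusie description of the derived exterior powers of a two-term complex of locally free sheaves recalled in (\ref{negpart}), each $L\wedge^r L_{\X/\bz}$ is a perfect complex of $\co_\X$-modules whose cohomology sheaves sit in degrees $[-r,0]$. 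The hypercohomology spectral sequence then places $R\Gamma(\X,L\wedge^r L_{\X/\bz}[-r])$ in degrees $[0,d-1+r]$. Since $d-1+r\le 2d-2$ for $r\le d-1$, the filtration gives $H^i(\X,L\Omega^{[0,d-1]}_{\X/\bz})=0$ for $i>2d-2$, the first assertion; moreover $F^0/F^{d-1}=R\Gamma(\X,L\Omega^{[0,d-2]}_{\X/\bz})$ lives in degrees $[0,2d-3]$, so $H^j(F^0/F^{d-1})=0$ for $j\ge 2d-2$.

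Next I would use the exact triangle $F^{d-1}/F^d\to F^0/F^d\to F^0/F^{d-1}$. Under the identification $L\wedge^{d-1}L_{\X/\bz}[-d+1]=L\Omega^{[d-1,d-1]}_{\X/\bz}$, the map in the statement is $H^{2d-2}$ of the first arrow, which is surjective because $H^{2d-2}(F^0/F^{d-1})=0$. To see that its kernel is torsion, I would tensor with $\bq$: since $\X_\bq/\bq$ is smooth of dimension $d-1$ and $\Omega^{\ge d}_{\X_\bq/\bq}=0$, one has $(F^0/F^d)\otimes\bq\simeq R\Gamma_{dR}(\X_\bq/\bq)$ and $(F^{d-1}/F^d)\otimes\bq\simeq R\Gamma(\X_\bq,\Omega^{d-1}_{\X_\bq/\bq}[-d+1])$; by Hodge--de Rham degeneration the only nonzero graded piece of $H^{2d-2}_{dR}(\X_\bq/\bq)$ is $H^{d-1}(\X_\bq,\Omega^{d-1}_{\X_\bq/\bq})$, so the rationalized map is the isomorphism $H^{d-1}(\X_\bq,\Omega^{d-1}_{\X_\bq/\bq})\xrightarrow{\sim}H^{2d-2}_{dR}(\X_\bq/\bq)$. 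Hence we have a surjection of finitely generated abelian groups that becomes an isomorphism after $\otimes\bq$, so its kernel is torsion and $g$ is an isomorphism modulo torsion.

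The trace map is then the composite displayed in the statement: the canonical projection $R\Gamma(\X,L\Omega^{[0,d-1]}_{\X/\bz})[2d-2]\to H^{2d-2}(\X,L\Omega^{[0,d-1]}_{\X/\bz})/\mathrm{tor}$ (available precisely because the cohomology vanishes above degree $2d-2$), followed by $g^{-1}$, by $(\ref{can})_*$ on $H^{2d-2}(-)/\mathrm{tor}$, and finally by the coherent (Grothendieck) duality trace $H^{2d-2}(\X,\omega_{\X/\bz}[-d+1])/\mathrm{tor}=H^{d-1}(\X,\omega_{\X/\bz})/\mathrm{tor}\to\bz$ attached to the proper lci morphism $\X\to\Spec(\bz)$ of relative dimension $d-1$ (whose dualizing complex is $\omega_{\X/\bz}[d-1]$).

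I expect the only genuinely delicate point to be the rational step: identifying $(F^0/F^d)\otimes\bq$ with the full algebraic de Rham cohomology of $\X_\bq$ (using $\dim\X_\bq=d-1$) and invoking Hodge--de Rham degeneration to see that no lower Hodge piece contributes in the top degree $2d-2$; the amplitude bookkeeping of the first paragraph and the assembly of the trace in the third are routine.
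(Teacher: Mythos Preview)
Your proof is correct and follows essentially the same strategy as the paper: bound the coherent cohomological amplitude of each $F^r/F^{r+1}$, run the Hodge filtration to get the vanishing and the surjectivity, and compare with $\X_\bq$ in top degree. Two minor differences worth noting: the paper obtains $H^d(\X,\F)=0$ via a Grothendieck-duality argument rather than your (cleaner) use of the affineness of $\Spec(\bz)$ together with fibre dimension $d-1$; and for the rational step Hodge--de~Rham degeneration is unnecessary, since $(p,q)=(d-1,d-1)$ is the only $E_1$-term of the Hodge--de~Rham spectral sequence for $\X_\bq$ in total degree $2d-2$ and it sits in the corner, so the edge map is an isomorphism regardless of degeneration (the paper simply cites the Stacks Project here).
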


\begin{proof} We first remark that $H^i(\X,\F)=0$ for $i\geq d$ and any coherent sheaf $\F$ on $\X$. Indeed, this is clear for $i>d$ since the cohomological dimension of $\X_{Zar}$ is $d$. Duality for $f:\X\to\Spec(\bz)$
\begin{equation} R\Hom_\bz(Rf_*\F,\bz)\cong R\Hom_{\X}(\F,\omega_{\X/\bz}[d-1]) \notag\end{equation}
evaluated in degree $-d$ 
$$  \Hom_\bz(H^d(\X,\F),\bz)\cong  H^{-1}R\Hom_{\X}(\F,\omega_{\X/\bz})=0$$
shows that $H^d(\X,\F)$ is torsion. Evaluation in degree $-d+1$ 
$$ 0\to \Ext^1(H^d(\X,\F),\bz)\to \Hom_\X(\F,\omega_{\X/\bz}) \to \Hom_\bz(H^{d-1}(\X,\F),\bz)\to 0$$
shows that $H^d(\X,\F)=0$ since $\omega_{\X/\bz}$ is a line bundle, $f$ is flat, and therefore $\Hom_\X(\F,\omega_{\X/\bz})$ is torsion free.

Since $L\wedge ^r L_{\X/\bz}$ is an object of the derived category of coherent sheaves concentrated in degrees $\leq 0$ we also have $H^i(\X,L\wedge ^rL_{\X/\bz})=0$ for $i\geq d$.
The exact triangle 
$$ R\Gamma(\X,L\wedge ^{r}L_{\X/\bz})[-r])\to R\Gamma(\X,L\Omega^{[n,r]}_{\X/\bz})\to R\Gamma(\X,L\Omega^{[n,r-1]}_{\X/\bz})\to$$
and an easy induction then show that 
$ H^i(\X,L\Omega^{[n,m]}_{\X/\bz})=0$ for $i\geq d+m$. In particular, the map
$$H^{2d-2}(\X,L\wedge^{d-1} L_{\X/\bz}[-d+1]) \to  H^{2d-2}(\X,L\Omega^{[0,d-1]}_{\X/\bz}) $$
is surjective and an isomorphism after tensoring with $\bq$ (see the proof of \cite{stacks}[Prop. 50.20.4]), hence induces an isomorphism
$$g: H^{2d-2}(\X,L\wedge^{d-1} L_{\X/\bz}[-d+1])/\mathrm{tor} \cong   H^{2d-2}(\X,L\Omega^{[0,d-1]}_{\X/\bz})/\mathrm{tor}. $$
\end{proof}

We now prove (\ref{discrepancy}) by downward induction on $n$  starting with the trivial case $n=d$. The induction step is provided by the diagram with exact rows and columns
\[\begin{CD} F^{n+1}/F^d @>\epsilon_{n+1}>> (F^0/F^{d-n-1})^*[-2d+2] @>>> \Cone(\epsilon_{n+1})\\
@VVV @VVV @VVV\\
F^n/F^d @>\epsilon_n >> (F^0/F^{d-n})^*[-2d+2] @>>> \Cone(\epsilon_{n})\\
@VVV @VVV @VVV\\
F^n/F^{n+1} @>>> (F^{d-n-1}/F^{d-n})^*[-2d+2] @>>> R\Gamma(\X,C_{\X/\bz}^n)[-n]
\end{CD}\]  
where the bottom exact triangle is $R\Gamma(\X,-)[-n]$ applied to (\ref{duality}) in view of (\ref{graded}) and coherent sheaf duality for $f:\X\to\Spec(\bz)$:
\begin{align*}(F^{d-n-1}/F^{d-n})^*[-2d+2]=&R\Hom(Rf_*(L\wedge^{d-1-n}L_{\X/\bz})[-d+1+n],\bz)[-2d+2]\\
\cong &R\Hom_\X(L\wedge^{d-1-n}L_{\X/\bz}[-d+1+n],\omega_{\X/\bz})[-d+1]\\
\cong &R\Gamma(\X,\underline{R\Hom}_\X(L\wedge^{d-1-n}L_{\X/\bz},\omega_{\X/\bz}))[-n].\end{align*}
By Theorem \ref{lichtenbaum} we have 
\[ \chi^\times ( \Cone(\epsilon_{n}))=\chi^\times ( \Cone(\epsilon_{n+1}))\cdot A(\X)\]
which gives $\chi^\times ( \Cone(\epsilon_{n}))=A(\X)^{d-n}$ by induction.
\end{proof}

For any $n\in \bz$ we have an exact triangle on the generic fibre $X=\X_\bq$
\begin{equation} F^n\to R\Gamma_{dR}(X/\mathbb{Q})\to R\Gamma_{dR}(X/\mathbb{Q})/F^n\label{genericfil}\end{equation}
and we also have a duality isomorphism (\ref{pd})$_\bq$ for any $n\in\bz$ since $F^n=F^d=0$ on the generic fibre for $n\geq d$.

\begin{cor} Let $n\in\bz$ and denote by $\lambda_{dR}$ the isomorphism
\begin{align*}&\left(\mathrm{det}^{-1}_{\mathbb{Z}} R\Gamma_{dR}(\mathcal{X}/\mathbb{Z})/F^n
\otimes\mathrm{det}_{\mathbb{Z}}R\Gamma_{dR}(\mathcal{X}/\mathbb{Z})/F^{d-n}\right)_\bq\\
\simeq\,\, &\mathrm{det}^{-1}_{\mathbb{Q}} R\Gamma_{dR}(X/\mathbb{Q})/F^n
\otimes\mathrm{det}_{\mathbb{Q}}R\Gamma_{dR}(X/\mathbb{Q})/F^{d-n}\\
\stackrel{(\ref{pd})_\bq}{\simeq}\,\, &\mathrm{det}^{-1}_{\mathbb{Q}} R\Gamma_{dR}(X/\mathbb{Q})/F^n
\otimes\mathrm{det}_{\mathbb{Q}}^{-1}F^n\\
\stackrel{(\ref{genericfil})}{\simeq}\,\, &\mathrm{det}^{-1}_{\mathbb{Q}} R\Gamma_{dR}(X/\mathbb{Q})\\
\simeq\,\,  & \left(\mathrm{det}^{-1}_{\mathbb{Z}} R\Gamma_{dR}(\mathcal{X}/\mathbb{Z})/F^d\right)_\bq.
\end{align*}
Then
\begin{multline*} \lambda_{dR}\left(\mathrm{det}^{-1}_{\mathbb{Z}} R\Gamma_{dR}(\mathcal{X}/\mathbb{Z})/F^n
\otimes\mathrm{det}_{\mathbb{Z}}R\Gamma_{dR}(\mathcal{X}/\mathbb{Z})/F^{d-n}\right)
= A(\X)^{d-n}\cdot\mathrm{det}^{-1}_{\mathbb{Z}} R\Gamma_{dR}(\mathcal{X}/\mathbb{Z})/F^d.\end{multline*}
\label{dRfactor}\end{cor}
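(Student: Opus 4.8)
The plan is to trace the integral lattice $\mathrm{det}^{-1}_{\bz} R\Gamma_{dR}(\X/\bz)/F^n \otimes\mathrm{det}_{\bz}R\Gamma_{dR}(\X/\bz)/F^{d-n}$ through the four isomorphisms whose composite is $\lambda_{dR}$, keeping track of the discrepancy introduced at each stage. Three of the four steps are lattice-preserving for formal reasons. The first and last maps are the canonical base-change isomorphisms $(\mathrm{det}_\bz C)_\bq \cong \mathrm{det}_\bq C_\bq$ attached to the perfect complexes $C = R\Gamma_{dR}(\X/\bz)/F^n$, $R\Gamma_{dR}(\X/\bz)/F^{d-n}$, $R\Gamma_{dR}(\X/\bz)/F^d$, using $(R\Gamma_{dR}(\X/\bz)/F^m)\otimes_\bz\bq \cong R\Gamma_{dR}(X/\bq)/F^m$ (base change of the cotangent complex and of the Hodge filtration); these carry $\bz$-lattice to $\bz$-lattice. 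The step labelled $(\ref{genericfil})$ is identified, on determinants, with the $\bq$-base change of the determinant isomorphism coming from the integral exact triangle of perfect complexes
\[ F^n/F^d \to R\Gamma_{dR}(\X/\bz)/F^d \to R\Gamma_{dR}(\X/\bz)/F^n, \]
obtained from the composite $F^d\to F^n\to R\Gamma_{dR}(\X/\bz)$ by the octahedral axiom; it too is lattice-preserving.

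All the discrepancy is therefore concentrated in the step labelled $(\ref{pd})_\bq$. The key point is that this rational isomorphism is the generic fibre of the integral duality map $\epsilon_n$ of Proposition \ref{drduality} a): since $\Cone(\epsilon_n)$ has finite cohomology, $\epsilon_{n,\bq}$ is an isomorphism, and the induced map on determinant lines is exactly the identification $\mathrm{det}_\bq R\Gamma_{dR}(X/\bq)/F^{d-n} \cong \mathrm{det}^{-1}_\bq F^n$ used in the corollary (the even shift $[-2d+2]$ contributes no sign, and $F^d_\bq=0$ makes $F^n/F^d$ base-change to $F^n_\bq$). From the exact triangle $F^n/F^d \to (R\Gamma_{dR}(\X/\bz)/F^{d-n})^*[-2d+2] \to \Cone(\epsilon_n)$, together with the identity $\mathrm{det}_\bz P = \bz\cdot\chi^\times(P)^{-1}$ for a perfect complex $P$ of finite abelian groups (recorded in the proof of Proposition \ref{final-bettiprop}), one reads off that under $(\ref{pd})_\bq$ the lattice $\mathrm{det}_\bz R\Gamma_{dR}(\X/\bz)/F^{d-n}$ corresponds to $\chi^\times(\Cone(\epsilon_n))\cdot\mathrm{det}^{-1}_\bz(F^n/F^d) = A(\X)^{d-n}\cdot\mathrm{det}^{-1}_\bz(F^n/F^d)$, the last equality being $(\ref{discrepancy})$.

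Assembling the four steps: the integral lattice is first carried to $\mathrm{det}^{-1}_\bz R\Gamma_{dR}(\X/\bz)/F^n \otimes\mathrm{det}_\bz R\Gamma_{dR}(\X/\bz)/F^{d-n}$; the map $(\ref{pd})_\bq$ multiplies it by $A(\X)^{d-n}$ and rewrites it as $A(\X)^{d-n}\cdot\mathrm{det}^{-1}_\bz R\Gamma_{dR}(\X/\bz)/F^n \otimes\mathrm{det}^{-1}_\bz(F^n/F^d)$; the integral triangle displayed above (the source of $(\ref{genericfil})$) identifies $\mathrm{det}^{-1}_\bz(F^n/F^d)\otimes\mathrm{det}^{-1}_\bz R\Gamma_{dR}(\X/\bz)/F^n$ with $\mathrm{det}^{-1}_\bz R\Gamma_{dR}(\X/\bz)/F^d$; and the final base change identifies the latter with itself inside its rationalisation. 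Hence $\lambda_{dR}$ sends the integral lattice to $A(\X)^{d-n}\cdot\mathrm{det}^{-1}_\bz R\Gamma_{dR}(\X/\bz)/F^d$, which is the assertion.

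The step I expect to require the most care is the verification that the rational Poincaré duality map $(\ref{pd})_\bq$ entering the definition of $\lambda_{dR}$ agrees on determinants with the generic fibre of $\epsilon_n$, and, likewise, that the integral triangle $F^n/F^d \to R\Gamma_{dR}(\X/\bz)/F^d \to R\Gamma_{dR}(\X/\bz)/F^n$ reduces to $(\ref{genericfil})$ after $-\otimes_\bz\bq$. Both rest on the compatibility of the Hodge filtration on $R\Gamma_{dR}(\X/\bz)$ with base change to $\bq$ together with the vanishing $F^d\otimes_\bz\bq = 0$ on the $(d-1)$-dimensional $\bq$-scheme $X = \X_\bq$; once these are in place, everything else is the routine bookkeeping of determinants of exact triangles of perfect complexes (and signs are irrelevant since the claim is an equality of lattices).
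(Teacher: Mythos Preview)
Your argument is essentially the paper's own proof for the range $n\leq d$: you correctly identify that only the step $(\ref{pd})_\bq$ introduces a discrepancy, realise it as the generic fibre of the integral map $\epsilon_n$, and read off the factor $A(\X)^{d-n}$ from $(\ref{discrepancy})$, while the other steps are integrally defined via the triangle $F^n/F^d\to R\Gamma_{dR}(\X/\bz)/F^d\to R\Gamma_{dR}(\X/\bz)/F^n$.

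However, the corollary is stated for all $n\in\bz$, and your argument tacitly assumes $n\leq d$. The map $\epsilon_n$ of Proposition~\ref{drduality}~a) is only constructed for $n\leq d$, and the octahedral triangle you invoke, built from $F^d\to F^n\to R\Gamma_{dR}(\X/\bz)$, requires the filtration map $F^d\to F^n$, i.e.\ $n\leq d$. For $n>d$ the situation is different: one has $R\Gamma_{dR}(\X/\bz)/F^{d-n}=0$ (since $d-n<0$), so the duality step $(\ref{pd})_\bq$ contributes nothing; the entire discrepancy now comes from the triangle
\[ F^d/F^n\to R\Gamma_{dR}(\X/\bz)/F^n\to R\Gamma_{dR}(\X/\bz)/F^d \]
whose first term has finite cohomology. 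One then computes
\[ \chi^\times(F^d/F^n)=\prod_{r=d}^{n-1}\chi^\times\bigl(R\Gamma(\X,L\wedge^rL_{\X/\bz}[-r])\bigr)=\prod_{r=d}^{n-1}\chi^\times\bigl(R\Gamma(\X,C^r_{\X/\bz}[-r-1])\bigr)=A(\X)^{d-n} \]
using $(\ref{graded})$ and Theorem~\ref{lichtenbaum} (note $L\wedge^{d-1-r}L_{\X/\bz}=0$ for $r\geq d$, so $C^r_{\X/\bz}=L\wedge^rL_{\X/\bz}[1]$). You should add this case to complete the proof.
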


\begin{proof} For $n\leq d$ this is clear from Prop. \ref{drduality} and the fact that (\ref{genericfil}) is the scalar extension to $\bq$ of the exact triangle
\[ F^n/F^d\to R\Gamma_{dR}(\mathcal{X}/\mathbb{Z})/F^d\to R\Gamma_{dR}(\mathcal{X}/\mathbb{Z})/F^n.\]
For $n>d$ we have $R\Gamma_{dR}(\X/\mathbb{Z})/F^{d-n}=0$ and an exact triangle
\[ F^d/F^n\to R\Gamma_{dR}(\mathcal{X}/\mathbb{Z})/F^n\to R\Gamma_{dR}(\mathcal{X}/\mathbb{Z})/F^d\]
where 
\begin{align*} \chi^\times(F^d/F^n)=&\prod_{r=d}^{n-1} \chi^\times \left(R\Gamma(\X,L\wedge ^r L_{\X/\bz}[-r])\right)\\
=&\prod_{r=d}^{n-1} \chi^\times \left(R\Gamma(\X,C^r_{\X/\bz}[-r-1])\right)\\
= &A(\X)^{d-n}\end{align*}
by (\ref{graded}) and Theorem \ref{lichtenbaum}. Hence
$$\mathrm{det}^{-1}_{\mathbb{Z}} R\Gamma_{dR}(\mathcal{X}/\mathbb{Z})/F^n = 
 A(\X)^{d-n}\cdot\mathrm{det}^{-1}_{\mathbb{Z}} R\Gamma_{dR}(\mathcal{X}/\mathbb{Z})/F^d$$
inside $\mathrm{det}^{-1}_{\mathbb{Q}} R\Gamma_{dR}(X/\mathbb{Q})$.

\end{proof}

\section{The archimedean Euler factor}\label{archimedean}

Following \cite{pr95}, for any pure $\br$-Hodge structure $M$ over $\br$ of weight $w(M)$ we define
\begin{align*} h_j(M)&=\dim F^j/F^{j+1}=h^{j,w(M)-j}(M)\\
d_\pm(M)&=\dim_\br M^{F_\infty=\pm 1}\\
t_H(M)&=\sum_jjh_j(M)=\frac{w(M)\cdot \dim_\br M}{2}=\frac{w(\det(M))}{2}\\
L_\infty(M,s)=&\prod_{p<q:=w(M)-p}\Gamma_{\bc}(s-p)^{h^{p,q}}\cdot\prod_{p=\frac{w(M)}{2}}
\Gamma_{\br}(s-p)^{h^{p,+}}\Gamma_{\br}(s-p+1)^{h^{p,-}}
\end{align*}
where
$$ \Gamma_\br(s)=\pi^{-s/2}\Gamma(s/2);\quad  \Gamma_{\bc}(s)=2(2\pi)^{-s}\Gamma(s). $$
Note that the factorization of $L_\infty(M,s)$ corresponds to the decomposition of $M$ into simple $\br$-Hodge structures over $\br$. 
Also recall the leading coefficient of the $\Gamma$-function at $j\in\bz$
\begin{equation} \Gamma^*(j)=\begin{cases} (j-1)! & j\geq 1\\ (-1)^j/(-j)! & j\leq 0\end{cases}  \label{gamma-leading}\end{equation}

\begin{lem}(see also \cite{pr95}[4.3.2, Lemme C.3.7]) For any pure $\br$-Hodge structure $M$ over $\br$ one has 
\[ \frac{L^*_\infty(M,0)}{L^*_\infty(M^*(1),0)}=\pm 2^{d_+(M)-d_-(M)}(2\pi)^{d_-(M)+t_H(M)}\prod_j\Gamma^*(-j)^{h_j(M)}\]
\label{pr}\end{lem}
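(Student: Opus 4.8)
The plan is to reduce to the case that $M$ is simple and then compute both sides by hand, using only the definitions of $\Gamma_\br,\Gamma_\bc$ and the classical duplication and reflection identities for the $\Gamma$-function. Both sides of the asserted identity are multiplicative in $M$: the factorization of $L_\infty(M,s)$ over the simple constituents of $M$ is the remark preceding the lemma, the operation $M\mapsto M^*(1)$ (dual $\br$-Hodge structure, Tate-twisted) commutes with direct sums, and on the other side $d_\pm$, $t_H$ and the $h_j$ are all additive in $M$. So it suffices to treat the three types of simple pure $\br$-Hodge structures over $\br$: (a) the two-dimensional $M$ with $M_\bc=M^{p,q}\oplus M^{q,p}$, $p<q$; (b) the one-dimensional $M$ of Hodge type $(p,p)$ with $F_\infty$ acting by $(-1)^p$, so that $h^{p,+}(M)=1$; and (c) the one-dimensional $M$ of Hodge type $(p,p)$ with $F_\infty$ acting by $(-1)^{p+1}$, so that $h^{p,-}(M)=1$. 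In case (a) one has $d_+=d_-=1$ and $t_H(M)=p+q=w(M)$; in cases (b), (c) one has $t_H(M)=p$, $w(M)=2p$, and $d_+-d_-=(-1)^p$, resp. $(-1)^{p+1}$.

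Next I would identify $M^*(1)$ inside this list. Dualizing sends a Hodge type $(p,q)$ to $(-p,-q)$ and the Tate twist shifts it to $(-p-1,-q-1)$, while $F_\infty$ gets multiplied by $-1$. A short check shows that in case (a) $M^*(1)$ is again of type (a) with the pair $(-q-1,-p-1)$, and in cases (b) and (c) $M^*(1)$ is again of the corresponding one-dimensional type at the index $p':=-p-1$ (type (b) stays (b), type (c) stays (c)). In all three cases one reads off the clean identity $L_\infty(M^*(1),s)=L_\infty(M,s+w(M)+1)$, hence $L^*_\infty(M^*(1),0)=L^*_\infty(M,w(M)+1)$, and we are reduced to computing the single ratio $L^*_\infty(M,0)/L^*_\infty(M,w(M)+1)$ for the three simple $M$.

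For the explicit computation, in case (a) one has $L_\infty(M,s)=\Gamma_\bc(s-p)$. From $\Gamma_\bc(s)=2(2\pi)^{-s}\Gamma(s)$ and (\ref{gamma-leading}) one gets the uniform leading-coefficient formula $\Gamma_\bc^*(j)=2(2\pi)^{-j}\Gamma^*(j)$ for all $j\in\bz$, so the ratio equals $(2\pi)^{p+q+1}\Gamma^*(-p)/\Gamma^*(q+1)$; using the reflection identity in the form $\Gamma^*(n)\Gamma^*(1-n)=\pm1$ for $n\in\bz$ this becomes $\pm(2\pi)^{1+p+q}\Gamma^*(-p)\Gamma^*(-q)$, matching the right-hand side with $d_+-d_-=0$, $d_-+t_H=1+p+q$ and $\prod_j\Gamma^*(-j)^{h_j}=\Gamma^*(-p)\Gamma^*(-q)$. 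In cases (b) and (c) one has $L_\infty(M,s)=\Gamma_\br(s-p)$, resp. $\Gamma_\br(s-p+1)$; the crucial point is that $\Gamma_\br(s)=\pi^{-s/2}\Gamma(s/2)$ acquires, by the chain rule, an extra factor $2$ in its leading coefficient at the poles $s\in\{0,-2,-4,\dots\}$, and this factor occurs in exactly one of numerator and denominator of the ratio, according to the parity of $p$, which is precisely what produces the power $2^{d_+(M)-d_-(M)}$ in the statement. The remaining powers of $2$, $\pi$ and $\Gamma^*$ are then matched using the Legendre duplication formula $\Gamma_\br(s)\Gamma_\br(s+1)=\Gamma_\bc(s)$ (equivalently $\Gamma(z)\Gamma(z+\tfrac12)=2^{1-2z}\sqrt{\pi}\,\Gamma(2z)$) together with the reflection formula; the exponent of $(2\pi)$ that comes out is $t_H(M)+d_-(M)$, as required.

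The main obstacle is bookkeeping rather than conceptual. One must keep straight (i) which one-dimensional type one is in, i.e. the precise relation between the labels $\pm$ in $h^{p,\pm}$, the $F_\infty$-eigenvalue being $(-1)^p$ versus $(-1)^{p+1}$, and the sign of $d_+-d_-$ (a wrong convention here flips the exponent of $2$); and (ii) which $\Gamma$-arguments at $s=0$ are poles rather than genuine values, since it is exactly the poles of $\Gamma_\br$ that contribute the powers of $2$. Tracking the overall sign of the ratio is genuinely delicate — which is why the statement only claims equality up to sign — so I would not attempt it, and would instead verify the identity of the $\pm$-ambiguous quantities, which is all that is used in the subsequent sections.
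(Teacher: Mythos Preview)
Your proposal is correct and follows the same overall strategy as the paper: reduce to simple $\br$-Hodge structures by additivity, identify $M^*(1)$ in each case, and compute the ratio case by case. The paper carries out the case-by-case step slightly differently, using the reflection formula to rewrite $L_\infty(M,s)/L_\infty(M^*(1),-s)$ as a product of $\Gamma_\bc$-factors times an explicit $\sin$ or $\cos$ and then evaluating the leading coefficient of that expression at $s=0$, whereas you evaluate $L_\infty^*(M,0)$ and $L_\infty^*(M,w(M)+1)$ separately via your clean identity $L_\infty(M^*(1),s)=L_\infty(M,s+w(M)+1)$ and then take the quotient. Both routes work and require the same ingredients (reflection and duplication).

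One small caution: your heuristic that the power $2^{d_+(M)-d_-(M)}$ in the one-dimensional cases comes precisely from the chain-rule factor $2$ at the poles $s\in\{0,-2,-4,\dots\}$ of $\Gamma_\br$ is not quite right as stated. For instance, in case (b) with $p=1$ the relevant arguments are $-p=-1$ and $p+1=2$, neither of which is a pole of $\Gamma_\br$, yet the factor $2^{-1}$ still appears; it comes instead from $\Gamma(-1/2)=-2\sqrt{\pi}$. In general the stray $2$'s arise both from the chain rule at genuine poles and from the half-integer values of $\Gamma$, and the duplication formula (which you do invoke) is what packages these uniformly. So your plan goes through, but be careful not to lean on the pole heuristic when writing out the one-dimensional cases.
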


\begin{proof} The functional equation of the $\Gamma$-function
\begin{equation}  \Gamma(s)\Gamma(1-s)=\frac{\pi}{\sin(\pi s)} \label{gammafe}\end{equation}
implies
\begin{equation} \Gamma_\bc(s)\Gamma_\bc(1-s)=\frac{2}{\sin(\pi s)};\quad\quad \Gamma_\br(1+s)\Gamma_\br(1-s)=\cos(\frac{\pi s}{2})^{-1} .         \notag\end{equation}
Hence
\begin{equation}\frac{\Gamma_\bc(s-p)}{\Gamma_\bc(-s-(-q-1))}= \frac{\Gamma_\bc(s-p)}{\Gamma_\bc(1-(s-q))}=\Gamma_\bc(s-p)\Gamma_\bc(s-q)\frac{\sin(\pi(s-q))}{2}.
\label{lhodgefe}\end{equation}
Using in addition the identity $\Gamma_\br(s)\Gamma_\br(s+1)=\Gamma_\bc(s)$ we find
\begin{equation}\frac{\Gamma_\br(s-p)}{\Gamma_\br(-s-(-p-1))}= \frac{\Gamma_\br(s-p)\Gamma_\br(s-p+1)}{\Gamma_\br(1-(s-p))\Gamma_\br(s-p+1)}=\Gamma_\bc(s-p)\cos(\frac{\pi(s-p)}{2})
\label{lhodgefe2}\end{equation}
and similarly
\begin{equation}\frac{\Gamma_\br(s-p+1)}{\Gamma_\br(-s-(-p-1)+1)}= \frac{\Gamma_\br(s-p+1)\Gamma_\br(s-p)}{\Gamma_\br(2-(s-p))\Gamma_\br(s-p)}=\Gamma_\bc(s-p)\cos(\frac{\pi(s-p-1)}{2}).
\label{lhodgefe3}\end{equation}
%since $\cos(\frac{\pi(s-p-1)}{2})=\sin(\frac{\pi(s-p)}{2})$.

Every pure $\br$-Hodge structure $M$ over $\br$ is the direct sum of simple $\br$-Hodge structures. The simple $\br$-Hodge structures are  $M_{p,q}$ of dimension $2$ for integers $p<q$ and $M_{p,\pm}$ of dimension $1$ for integers $p$ (where $F_\infty$ operates via $\pm (-1)^p$). From the above definition of $L_\infty(M,s)$  and (\ref{lhodgefe}), (\ref{lhodgefe2}), (\ref{lhodgefe3}) we obtain the following table
\[
\begin{array}{c|c|c|c} M & M^*(1)  & L_\infty(M,s) & \frac{L_\infty(M,s)}{L_\infty(M^*(1),-s)} \\
\hline
M_{p,q} & M_{-p-1,-q-1} & \Gamma_\bc(s-p) & \Gamma_\bc(s-p)\Gamma_\bc(s-q)\cdot\frac{\sin(\pi(s-q))}{2} \\
\hline
M_{p,+} & M_{-p-1,+} & \Gamma_\br(s-p)  & \Gamma_\bc(s-p)\cdot\cos(\frac{\pi(s-p)}{2})  \\
\hline
M_{p,-} & M_{-p-1,-} & \Gamma_\br(s-p+1)  & \Gamma_\bc(s-p)\cdot\cos(\frac{\pi(s-p-1)}{2})  \\
\end{array}\]
We have 
\[ \left. \sin(\pi(s-q))\right|^*_{s=0}=(-1)^q\pi\]
and
\[ \left.\cos(\frac{\pi(s-p)}{2})\right|^*_{s=0}=\begin{cases}(-1)^{p/2} & \text{$p$ even}\\ (-1)^{(p-1)/2}\frac{\pi}{2} & \text{$p$ odd}\end{cases}\]

It is now straightforward to verify the entries of the following table which confirm Lemma \ref{pr} for simple $\br$-Hodge structures. Since all quantities are additive in $M$ the general case follows by writing $M$ as a sum of simple $\br$-Hodge structures.
\[
\begin{array}{c|c|c|c|c|c} M & d_+(M) &d_-(M) & h_j(M)& t_H(M)    &\frac{L^*_\infty(M,0)}{L^*_\infty(M^*(1),0)}\\
\hline
M_{p,q} & 1& 1 & \text{$1$ for $j=p,q$}&p+q  &  \pm(2\pi)^{p+q+1}\Gamma^*(-p)\Gamma^*(-q)\\
& & & \text{$0$ else}& & \\
\hline
M_{p,+} & & & &   &  \\
\text{$p$ even} &1 &0 &\text{$1$ for $j=p$}& p& \pm 2(2\pi)^p\Gamma^*(-p)\\
\text{$p$ odd} & 0& 1&\text{$1$ for $j=p$}& p& \pm 2(2\pi)^p\Gamma^*(-p)\cdot\frac{\pi}{2}\\
\hline
M_{p,-} & & & &   &  \\
\text{$p$ even} &0 &1 &\text{$1$ for $j=p$}& p& \pm 2(2\pi)^p\Gamma^*(-p)\cdot\frac{\pi}{2}\\
\text{$p$ odd} & 1& 0&\text{$1$ for $j=p$}& p& \pm 2(2\pi)^p\Gamma^*(-p)
\end{array}\]
\end{proof}

Suppose now $\X$ is a regular scheme, proper and flat over $\Spec(\bz)$ with generic fibre $X:=\X_\bq$. The archimedean Euler factor of $\X$ is defined as
\begin{equation} \zeta(\mathcal{X}_\infty,s)=\prod_{i\in\bz} L_\infty(h^i(X),s)^{(-1)^i}\label{zetaxinftydef}\end{equation}
where $h^i(X)$ is the $\br$-Hodge structure on $H^i(\X(\bc),\br)$.

\begin{cor} One has  
\[ \frac{\zeta^*(\mathcal{X}_\infty,n)}{\zeta^*(\mathcal{X}_\infty,d-n)}=\pm 2^{d_+(\X,n)-d_-(\X,n)}(2\pi)^{d_-(\X,n)+t_H(\X,n)}\prod_{p,q}\Gamma^*(n-p)^{h^{p,q}\cdot (-1)^{p+q}}\]
where
\[ d_\pm(\X,n)=\sum_i (-1)^i d_\pm(h^i(X)(n)),\quad t_H(\X,n)=\sum_i (-1)^i t_H(h^i(X)(n)) \]
and $h^i(X)(n)$ denotes the $\br$-Hodge structure on $H^i(\X(\bc),(2\pi i)^n\br)$.
\label{prcor}\end{cor}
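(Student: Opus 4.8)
The plan is to reduce the corollary to a termwise application of Lemma \ref{pr} to the $\br$-Hodge structures $h^i(X)(n)$, using Poincar\'e duality on $X(\bc)$ to identify the ``dual side'' with $\zeta^*(\mathcal{X}_\infty,d-n)$. First I would record that, since $\X$ is regular and $X=\X_\bq$ is then regular over a field of characteristic zero, $X$ is smooth over $\bq$; being also proper, $X(\bc)$ is a smooth projective complex variety of complex dimension $e=d-1$, so each $h^i(X)$ is a pure $\br$-Hodge structure of weight $i$, vanishing outside $0\le i\le 2e$. From the definition (\ref{zetaxinftydef}) together with the identity $L_\infty(M(n),s)=L_\infty(M,s+n)$ (immediate from $(M(n))^{p,q}=M^{p+n,q+n}$), passing to leading Taylor coefficients gives
$$\zeta^*(\mathcal{X}_\infty,n)=\prod_{i}L^*_\infty(h^i(X)(n),0)^{(-1)^i},\qquad \zeta^*(\mathcal{X}_\infty,d-n)=\prod_i L^*_\infty(h^i(X)(d-n),0)^{(-1)^i}.$$

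Next I would invoke Poincar\'e duality on the $2e$-manifold $X(\bc)$, which is a perfect pairing of $\br$-Hodge structures $h^i(X)\otimes h^{2e-i}(X)\to h^{2e}(X)\cong\br(-e)$ compatible with the action of complex conjugation, hence an isomorphism of $\br$-Hodge structures $h^i(X)^*\cong h^{2e-i}(X)(e)$, canonical up to a sign that does not affect $L$-factors. Twisting,
$$\bigl(h^i(X)(n)\bigr)^*(1)=h^i(X)^*(1-n)\cong h^{2e-i}(X)(e+1-n)=h^{2e-i}(X)(d-n),$$
so $L^*_\infty\bigl((h^i(X)(n))^*(1),0\bigr)=L^*_\infty(h^{2e-i}(X)(d-n),0)$; since $(-1)^{2e-i}=(-1)^i$, the reindexing $i\mapsto 2e-i$ then identifies $\prod_i L^*_\infty\bigl((h^i(X)(n))^*(1),0\bigr)^{(-1)^i}$ with $\zeta^*(\mathcal{X}_\infty,d-n)$.

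I would then apply Lemma \ref{pr} to $M=h^i(X)(n)$, raise to the power $(-1)^i$, and take the product over $i$. By the two displays above the left-hand side is exactly $\zeta^*(\mathcal{X}_\infty,n)/\zeta^*(\mathcal{X}_\infty,d-n)$. On the right-hand side, the factors $2^{d_+(M)-d_-(M)}$ and $(2\pi)^{d_-(M)+t_H(M)}$ combine, by the very definitions of $d_\pm(\X,n)$ and $t_H(\X,n)$ given in the statement, to $2^{d_+(\X,n)-d_-(\X,n)}(2\pi)^{d_-(\X,n)+t_H(\X,n)}$. For the $\Gamma$-factors, note $M=h^i(X)(n)$ has weight $i-2n$, whence $h_j(M)=h^{j,\,i-2n-j}(M)=h^{j+n,\,i-n-j}(h^i(X))$; substituting $p=j+n$, $q=i-p$ (so $p+q=i$ and $\Gamma^*(-j)=\Gamma^*(n-p)$) converts $\prod_i\prod_j\Gamma^*(-j)^{(-1)^i h_j(h^i(X)(n))}$ into $\prod_{p,q}\Gamma^*(n-p)^{(-1)^{p+q}h^{p,q}}$, which is the asserted formula.

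The argument is essentially bookkeeping; the only point requiring genuine care is the compatibility of Poincar\'e duality with the Hodge structures and with the $F_\infty$-action (complex conjugation), used to get the isomorphism $h^i(X)^*\cong h^{2e-i}(X)(e)$, together with the systematic tracking of Tate twists. All sign ambiguities are harmless, as the identity is only claimed up to $\pm$.
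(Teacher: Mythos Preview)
Your proposal is correct and follows essentially the same route as the paper: identify $(h^i(X)(n))^*(1)\cong h^{2d-2-i}(X)(d-n)$ via Poincar\'e duality, apply Lemma \ref{pr} to each $M=h^i(X)(n)$, and reindex the Hodge numbers via $j=p-n$, $q=i-p$. One inconsequential slip: $X$ is proper but not assumed projective, so $X(\bc)$ is a compact K\"ahler manifold rather than necessarily projective; this does not affect the Hodge-theoretic input you use.
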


\begin{proof} For $M=h^i(X)(n)$ one has $M^*(1)\cong h^{2d-2-i}(X)(d-n)$ and 
$$h_j(M)=h^{j,i-2n-j}(M)=h^{p-n,i-p-n}(M)=h^{p,i-p}=h^{p,q}$$
with $p+q=i$, $j=p-n$. Therefore Lemma \ref{pr} implies
\begin{align*}\frac{\zeta^*(\mathcal{X}_\infty,n)}{\zeta^*(\mathcal{X}_\infty,d-n)}
=&\prod_i\frac{L^*_\infty(h^i(X)(n),0)^{(-1)^i}}{L^*_\infty(h^{2d-2-i}(X)(d-n),0)^{(-1)^{2d-2-i}}}\\ 
=&2^{d_+(\X,n)-d_-(\X,n)}(2\pi)^{d_-(\X,n)+t_H(\X,n)}\prod_{p,q}\Gamma^*(n-p)^{h^{p,q}\cdot (-1)^{p+q}}.
\end{align*}
\end{proof}

\begin{lem} One has 
\[\frac{C(\mathcal{X},n)}{C(\mathcal{X},d-n)}=\pm\left(\prod_{p,q}\Gamma^*(n-p)^{h^{p,q}\cdot (-1)^{p+q}}\right)^{-1}\]
\label{clemma}\end{lem}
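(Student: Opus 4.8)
The plan is to compare the two sides directly, using the explicit formula (\ref{explicitcorrectingfactor}), Serre duality on $\X_\bq$, and the functional equation of the $\Gamma$-factor encoded in (\ref{gamma-leading}). Throughout, write $h^{p,q}:=\dim_\bq H^q(\X_\bq,\Omega^p)$ for the Hodge numbers of the smooth proper $\bq$-variety $\X_\bq$, which has dimension $d-1$; these vanish unless $0\leq p,q\leq d-1$, so all products below are finite, and they are the same $h^{p,q}$ occurring in Corollary \ref{prcor}.

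First I would rewrite (\ref{explicitcorrectingfactor}) in terms of $\Gamma^*$. Since $\Gamma^*(k)=(k-1)!$ for $k\geq 1$ and $n-1-p\geq 0$ exactly when $p\leq n-1$, formula (\ref{explicitcorrectingfactor}) becomes
\[ C(\X,n)^{-1}=\prod_{p\leq n-1;\,q}\Gamma^*(n-p)^{(-1)^{p+q}h^{p,q}} \]
(an empty product when $n\leq 0$, consistently with $C(\X,n)=1$). Writing this with $n$ replaced by $d-n$ and substituting $p\mapsto d-1-p$, $q\mapsto d-1-q$, and invoking Serre duality $h^{p,q}=h^{d-1-p,d-1-q}$ on $\X_\bq$ (note $(-1)^{(d-1-p)+(d-1-q)}=(-1)^{p+q}$), the truncation $p\leq d-n-1$ turns into $p\geq n$ and $d-n-1-p$ into $p-n$, so
\[ C(\X,d-n)^{-1}=\prod_{p\geq n;\,q}\Gamma^*(p-n+1)^{(-1)^{p+q}h^{p,q}}. \]

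Next I would use the elementary identity $\Gamma^*(j)\,\Gamma^*(1-j)=\pm 1$ for every $j\in\bz$, which is immediate from (\ref{gamma-leading}). With $j=n-p$ it gives $\Gamma^*(p-n+1)=\Gamma^*(1-(n-p))=\pm\,\Gamma^*(n-p)^{-1}$, whence
\[ C(\X,d-n)^{-1}=\pm\prod_{p\geq n;\,q}\Gamma^*(n-p)^{-(-1)^{p+q}h^{p,q}}. \]
Multiplying this by the reciprocal $C(\X,n)=\prod_{p\leq n-1;\,q}\Gamma^*(n-p)^{-(-1)^{p+q}h^{p,q}}$ of the first display, and noting that $\{p\leq n-1\}$ and $\{p\geq n\}$ partition $\bz$, the two partial products merge into
\[ \frac{C(\X,n)}{C(\X,d-n)}=\pm\prod_{p,q}\Gamma^*(n-p)^{-(-1)^{p+q}h^{p,q}}=\pm\Bigl(\prod_{p,q}\Gamma^*(n-p)^{h^{p,q}(-1)^{p+q}}\Bigr)^{-1}, \]
which is the assertion. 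No genuine obstacle arises; the only points demanding care are the index substitution that converts the truncation $p\leq d-n-1$ into $p\geq n$ — this is precisely where Serre duality on $\X_\bq$ enters — and the observation that, since the assertion is only up to sign, the $\Gamma^*$-functional equation may be applied without tracking the sign $(-1)^{n-p}$.
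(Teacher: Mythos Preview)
Your proof is correct and follows essentially the same route as the paper's: both rewrite $C(\X,n)^{-1}$ as the product of $\Gamma^*(n-p)^{(-1)^{p+q}h^{p,q}}$ over $p\leq n-1$, use the identity $\Gamma^*(j)\Gamma^*(1-j)=\pm 1$ together with the index substitution $p\mapsto d-1-p$, $q\mapsto d-1-q$ (and Serre duality $h^{p,q}=h^{d-1-p,d-1-q}$) to identify the complementary product over $p\geq n$ with $\pm C(\X,d-n)$, and combine. The only difference is the order in which the $\Gamma^*$-identity and the substitution are applied, and that you make the appeal to Serre duality explicit whereas the paper leaves it implicit in its reindexing.
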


\begin{proof} Since $\X(\bc)$ is smooth proper of dimension $d-1$ the Hodge numbers $h^{p,q}$ are nonzero only for $0\leq p\leq d-1$. By definition (\ref{explicitcorrectingfactor})
\begin{align} C(\mathcal{X},n)^{-1}=&\prod_{0\leq p\leq n-1,q}(n-p-1)!^{h^{p,q}\cdot (-1)^{p+q}}\notag\\
=&\prod_{0\leq p\leq n-1,q}\Gamma^*(n-p)^{h^{p,q}\cdot (-1)^{p+q}}.\label{lowerhalf}\end{align}
On the other hand (\ref{gammafe}) implies 
$$ \Gamma^*(j)\Gamma^*(1-j)=\pm1$$
and therefore we have
\begin{align} \prod_{n\leq p\leq d-1,q}\Gamma^*(n-p)^{h^{p,q}\cdot (-1)^{p+q}}=&\pm\prod_{n\leq p\leq d-1,q}\Gamma^*(1-(n-p))^{-h^{p,q}\cdot (-1)^{p+q}}\notag\\
=&\pm\prod_{0\leq p\leq d-n-1,q}\Gamma^*(d-n-p)^{-h^{p,q}\cdot (-1)^{p+q}}\notag\\
=&\pm C(\X,d-n).
\label{upperhalf}\end{align}
Combining (\ref{lowerhalf}) and (\ref{upperhalf}) gives the Lemma.
\end{proof}

\section{The main result}\label{sec:main}

Recall the definition of the completed Zeta-function of $\X$
$$\zeta(\overline{\mathcal{X}},s):=\zeta(\mathcal{X},s)\cdot \zeta(\mathcal{X}_\infty,s)$$
where $\zeta(\mathcal{X}_\infty,s)$ was defined in (\ref{zetaxinftydef}).  We repeat Conjecture \ref{conj-funct-equation-intro} from the introduction.

\setcounter{section}{1}
\setcounter{thm}{2}
\begin{conj}\label{conj-funct-equation} (Functional Equation) Let $\X$ be a regular scheme of dimension $d$, proper and flat over $\Spec(\bz)$. Then $\zeta(\X,s)$ has a meromorphic continuation to all $s\in\bc$ and 
$$A(\mathcal{X})^{(d-s)/2}\cdot \zeta(\overline{\mathcal{X}},d-s)=\pm A(\mathcal{X})^{s/2} \cdot \zeta(\overline{\mathcal{X}},s).$$
\end{conj}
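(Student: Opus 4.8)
The plan is to reduce the conjecture to the expected analytic properties of the motivic $L$-functions attached to the generic fibre $X:=\X_\bq$, to the archimedean computations of Section~\ref{archimedean}, and to the conjectural equality of the Bloch and Artin conductors. The first step is the factorisation of the Hasse--Weil zeta function
\[ \zeta(\X,s)=\prod_{i=0}^{2d-2}L(h^i(X),s)^{(-1)^i},\qquad L(h^i(X),s)=\prod_p\mydet\bigl(1-\mathrm{Frob}_p\,p^{-s}\mid H^i(X_{\overline{\bq}},\bq_\ell)^{I_p}\bigr)^{-1}, \]
the local $L$-factors being formed from the inertia invariants of $\ell$-adic cohomology. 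At primes of good reduction this is the Grothendieck--Lefschetz trace formula applied to $\X_{\bF_p}$ together with smooth proper base change; at the finitely many remaining primes it is the assertion that the local factor of $\zeta(\X_{\bF_p},s)$, a priori read off from the possibly singular or non-reduced special fibre, coincides with the inertia-invariant factor of the generic fibre. This holds when $\X$ is semistable at $p$ and is expected for every regular $\X$. The meromorphic continuation of $\zeta(\X,s)$, which is also part of the conjecture, will follow from the next step.

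For each $i$ one invokes the conjectural functional equation of the motive $h^i(X)$: the completed $L$-function $\Lambda(h^i(X),s):=N(h^i(X))^{s/2}L_\infty(h^i(X),s)L(h^i(X),s)$, with $N(h^i(X))=\prod_p p^{a_p(h^i(X))}$ the Artin conductor and $a_p$ the Artin conductor exponent, extends meromorphically and satisfies $\Lambda(h^i(X),s)=\varepsilon_i\cdot\Lambda(h^i(X)^\vee,1-s)$ with $|\varepsilon_i|=1$. Poincar\'e duality on $X$ gives $h^i(X)^\vee\cong h^{2d-2-i}(X)(d-1)$, so this relation links the $i$-th factor at $s$ to the $(2d-2-i)$-th factor at $d-s$. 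Forming the alternating product over $i$ and using $\zeta(\X_\infty,s)=\prod_i L_\infty(h^i(X),s)^{(-1)^i}$, the finite and archimedean $L$-factors recombine into $\zeta(\overline{\X},s)$ and $\zeta(\overline{\X},d-s)$ --- the bookkeeping of the archimedean factors, in particular the fact that no stray powers of $2$ or $\pi$ remain, being exactly what Lemma~\ref{pr} and Corollary~\ref{prcor} record --- the global root number $\prod_i\varepsilon_i^{(-1)^i}$ collapses to a sign $\pm$ by the reality of the Dirichlet series and $\Gamma$-factors involved, and the conductors assemble into $B(\X)^{s/2}$ and $B(\X)^{(d-s)/2}$ with $B(\X):=\prod_i N(h^i(X))^{(-1)^i}=\prod_p p^{\sum_i(-1)^i a_p(h^i(X))}$. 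This gives $B(\X)^{s/2}\zeta(\overline{\X},s)=\pm\,B(\X)^{(d-s)/2}\zeta(\overline{\X},d-s)$.

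It then remains to identify $B(\X)$ with the Bloch conductor $A(\X)=\prod_p p^{(-1)^{d-1}d_p}$ of Definition~\ref{Bloch}, i.e.\ to prove the local identity $\sum_i(-1)^i a_p(h^i(X))=(-1)^{d-1}d_p$ at every prime $p$; this is the conjectural equality of the Bloch and Artin conductors, known for $d\le 2$ (\cite{bloch87a}, the case $d=1$ being the conductor--discriminant formula) and for $\X$ with everywhere semistable reduction (\cite{ks04}). Granting it one has $B(\X)=A(\X)$, and substituting into the relation of the previous paragraph yields $A(\X)^{s/2}\zeta(\overline{\X},s)=\pm\,A(\X)^{(d-s)/2}\zeta(\overline{\X},d-s)$, which is the assertion.

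The decisive obstacle is the second step: the functional equation --- indeed already the meromorphic continuation --- of the motivic $L$-functions $L(h^i(X),s)$ is the Hasse--Weil conjecture, a central open part of the Langlands programme, available only when the $h^i(X)$ are known to be automorphic. The first and third steps are likewise open for general regular $\X$, although the first is a consequence of the weight--monodromy conjecture and the third is the more accessible of the two conductor comparisons. Accordingly the conjecture can currently be proven unconditionally only when all three inputs are simultaneously in force --- typically for $d\le 2$ with $X$ built out of motives covered by known automorphy theorems (for instance $\X$ a regular model of an abelian variety, of a modular curve, or of a quotient of a product of modular curves), or more generally whenever $X$ decomposes into such motives and $\X$ is semistable, so that \cite{ks04} supplies the conductor identity.
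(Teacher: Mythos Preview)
The statement is a \emph{conjecture} in the paper, and the paper offers no proof of it; after restating the conjecture in Section~\ref{sec:main} the authors only list the known cases ($d=1$ unconditionally, $d=2$ conditionally on automorphy of $h^1(\X_\bq)$, and the potentially modular cases coming from \cite{boxeretal}). So there is nothing to compare your argument to: the paper treats Conjecture~\ref{conj-funct-equation} as an \emph{input} to Theorem~\ref{thm:fe}, not as something it establishes.

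Your write-up is therefore not a proof but a (correct) heuristic reduction of the conjecture to three standard open problems --- the motivic factorisation of $\zeta(\X,s)$ at bad primes, the Hasse--Weil functional equation for each $L(h^i(X),s)$, and the Bloch--Artin conductor identity --- and you rightly flag each of these as conditional. This is entirely in line with how the paper and the references \cite{bloch87a}, \cite{ks04} view the situation. One small inaccuracy: your appeal to Lemma~\ref{pr} and Corollary~\ref{prcor} for the ``archimedean bookkeeping'' is misplaced. Those results compute the \emph{ratio of leading Taylor coefficients} $L_\infty^*(M,0)/L_\infty^*(M^*(1),0)$ at $s=0$, which is what the paper needs for Theorem~\ref{thmintro}. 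For the functional equation itself one only needs the tautology $L_\infty(h^i(X)^\vee(1),1-s)=L_\infty(h^{2d-2-i}(X),d-s)$, which is immediate from Poincar\'e duality on the Hodge structure and the definition of $L_\infty$; no stray powers of $2$ or $\pi$ arise because one is matching \emph{functions}, not special values.
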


This conjecture is true for $d=1$ where it reduces to the functional equation of the Dedekind Zeta function. It is true for $d=2$ by \cite{Bloch87}[Prop. 1.1] provided that the L-function $L(h^1(\X_\bq),s)$ satisfies the expected functional equation involving the Artin conductor of the $l$-adic representation $H^1(\X_{\bar{\bq}},\bq_l)$. This is the case if $\X$ is a regular model of a potentially modular elliptic curve over a number field $F$ in view of the compatibility of the (local) Langlands correspondence for $GL_2$ with $\epsilon$-factors and hence conductors. Potential modularity of elliptic curves is known if $F$ is totally real or quadratic over a totally real field. We refer to  \cite{boxeretal}[1.1] for a discussion of these results and for the original references. In \cite{boxeretal} potential modularity is also shown for abelian surfaces over totally real fields $F$ and hence Conjecture \ref{conj-funct-equation} should hold for regular models of genus 2 curves over totally real fields $F$ (since this involves the local Langlands correspondence for $GSp_4/F$ we are unsure whether the conductor in the functional equation is indeed the Artin conductor).

We repeat Theorem \ref{thm:fe} from the introduction which is the main result of this paper.

\begin{thm}
%\label{thm:fe}
Assume that $\zeta(\overline{\X},s)$ satisfies Conjecture \ref{conj-funct-equation}.
Then Conjecture \ref{conjmain} for $(\X,n)$ is equivalent to Conjecture \ref{conjmain} for $(\X,d-n)$.
\end{thm}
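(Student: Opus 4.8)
The plan is to reduce the claimed equivalence to a single scalar identity, verify that identity using Theorem~\ref{thmintro}, and obtain the remaining input from the functional equation. Unconditionally, the canonical isomorphism
\[ \Delta(\mathcal{X}/\mathbb{Z},n)\otimes \Xi_{\infty}(\mathcal{X}/\mathbb{Z},n)\stackrel{\sim}{\longrightarrow}\Delta(\mathcal{X}/\mathbb{Z},d-n) \]
of \cite{Flach-Morin-16}[Prop. 5.29], which is compatible with $\xi_\infty$ and the real trivializations $\lambda_\infty$ of $\Delta(\mathcal{X}/\mathbb{Z},n)$ and $\Delta(\mathcal{X}/\mathbb{Z},d-n)$ (see also \cite{Flach-Morin-16}[Cor. 5.31]), identifies $\Delta(\mathcal{X}/\mathbb{Z},d-n)$ with $x_\infty(\mathcal{X},n)^{-2}\cdot\Delta(\mathcal{X}/\mathbb{Z},n)$ once both are regarded as $\mathbb{Z}$-lattices in $\mathbb{R}$ via $\lambda_\infty$, because $\Xi_\infty(\mathcal{X}/\mathbb{Z},n)$ corresponds to $x_\infty(\mathcal{X},n)^{-2}\cdot\mathbb{Z}$ under $\xi_\infty$ by definition of $x_\infty(\mathcal{X},n)^2$. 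Writing $a_m:=\zeta^*(\mathcal{X},m)^{-1}C(\mathcal{X},m)$, Conjecture~\ref{conjmain} for $(\mathcal{X},m)$ is exactly the statement $\lambda_\infty(a_m\cdot\mathbb{Z})=\Delta(\mathcal{X}/\mathbb{Z},m)$. Hence Conjecture~\ref{conjmain} for $(\mathcal{X},n)$ holds if and only if $\lambda_\infty\big(a_n\,x_\infty(\mathcal{X},n)^{-2}\cdot\mathbb{Z}\big)=\Delta(\mathcal{X}/\mathbb{Z},d-n)$, and the symmetric computation shows Conjecture~\ref{conjmain} for $(\mathcal{X},d-n)$ holds iff $\lambda_\infty\big(a_{d-n}\,x_\infty(\mathcal{X},n)^{2}\cdot\mathbb{Z}\big)=\Delta(\mathcal{X}/\mathbb{Z},n)$; so the two conjectures are equivalent provided
\[ x_\infty(\mathcal{X},n)^2=\pm\,\frac{\zeta^*(\mathcal{X},d-n)}{\zeta^*(\mathcal{X},n)}\cdot\frac{C(\mathcal{X},n)}{C(\mathcal{X},d-n)}, \]
the sign being harmless since the two conjectures only compare $\mathbb{Z}$-lattices.

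I would then substitute Theorem~\ref{thmintro} for $x_\infty(\mathcal{X},n)^2$; the factor $C(\mathcal{X},n)/C(\mathcal{X},d-n)$ cancels, and after clearing denominators and using the multiplicativity $\zeta^*(\overline{\mathcal{X}},m)=\zeta^*(\mathcal{X},m)\cdot\zeta^*(\mathcal{X}_\infty,m)$ of leading Taylor coefficients, the identity above becomes equivalent to
\[ A(\mathcal{X})^{\,n-d/2}\cdot \zeta^*(\overline{\mathcal{X}},n)=\pm\,\zeta^*(\overline{\mathcal{X}},d-n). \]
This last relation I would extract from Conjecture~\ref{conj-funct-equation} by comparing leading Taylor coefficients at $s=n$ of the two sides of
\[ A(\mathcal{X})^{(d-s)/2}\cdot \zeta(\overline{\mathcal{X}},d-s)=\pm\, A(\mathcal{X})^{s/2}\cdot \zeta(\overline{\mathcal{X}},s). \]
The factors $A(\mathcal{X})^{(d-s)/2}$ and $A(\mathcal{X})^{s/2}$ are holomorphic and nowhere zero, so the functional equation first forces $\mathrm{ord}_{s=n}\zeta(\overline{\mathcal{X}},s)=\mathrm{ord}_{s=d-n}\zeta(\overline{\mathcal{X}},s)$; the substitution $s\mapsto d-s$ replaces the local parameter $s-n$ by $-(s-n)$, so the left-hand side picks up a sign $(-1)^{\mathrm{ord}}$, and one reads off $A(\mathcal{X})^{(d-n)/2}\zeta^*(\overline{\mathcal{X}},d-n)=\pm A(\mathcal{X})^{n/2}\zeta^*(\overline{\mathcal{X}},n)$, which rearranges to the displayed relation.

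The one substantial ingredient is Theorem~\ref{thmintro}, whose proof is the business of the earlier sections; the rest of the argument is formal. I therefore expect the main obstacle to be bookkeeping rather than anything conceptual: making the compatibility of the canonical isomorphism with all three trivializations precise enough that no sign is lost or gained, tracking the $(-1)^{\mathrm{ord}}$ produced by $s\mapsto d-s$, and noting that all residual sign ambiguities are irrelevant because Conjecture~\ref{conjmain} pins down $\zeta^*(\mathcal{X},n)$ only up to sign.
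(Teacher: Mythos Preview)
Your proposal is correct and follows essentially the same approach as the paper. The paper invokes \cite{Flach-Morin-16}[Cor.~5.31] to reduce the theorem to a scalar identity (stated there with $x_\infty(\mathcal{X},n)^{-1}$ and $x_\infty(\mathcal{X},d-n)^{-1}$, then simplified via the symmetry $x_\infty(\mathcal{X},n)^2\cdot x_\infty(\mathcal{X},d-n)^2=1$) which is exactly Theorem~\ref{thmintro}; you instead unpack that reduction directly from the compatibility of $\Delta(\mathcal{X}/\mathbb{Z},n)\otimes\Xi_\infty(\mathcal{X}/\mathbb{Z},n)\cong\Delta(\mathcal{X}/\mathbb{Z},d-n)$ with the trivializations, arrive at the equivalent scalar identity $x_\infty(\mathcal{X},n)^2=\pm\,a_n/a_{d-n}$, and then feed in Theorem~\ref{thmintro} together with the functional equation. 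The only difference is organizational: the paper packages the use of the functional equation inside the citation to \cite{Flach-Morin-16}[Cor.~5.31], whereas you make it explicit at the end.
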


\begin{proof} The reduction of this theorem to Theorem \ref{thmintro} was already made in \cite{Flach-Morin-16}[5.7]. More precisely, recall the invertible $\bz$-module 
\begin{eqnarray*}
\Xi_{\infty}(\mathcal{X}/\mathbb{Z},n)&:=&\mathrm{det}_{\mathbb{Z}}R\Gamma_{W}(\mathcal{X}_{\infty},\mathbb{Z}(n))\otimes\mathrm{det}^{-1}_{\mathbb{Z}} R\Gamma_{dR}(\mathcal{X}/\mathbb{Z})/F^n \\
& &\otimes  \mathrm{det}^{-1}_{\mathbb{Z}}R\Gamma_{W}(\mathcal{X}_{\infty},\mathbb{Z}(d-n))
\otimes\mathrm{det}_{\mathbb{Z}}R\Gamma_{dR}(\mathcal{X}/\mathbb{Z})/F^{d-n},
\end{eqnarray*}
the canonical isomorphism
$$\Delta(\mathcal{X}/\mathbb{Z},n)\otimes \Xi_{\infty}(\mathcal{X}/\mathbb{Z},n)\stackrel{\sim}{\longrightarrow}\Delta(\mathcal{X}/\mathbb{Z},d-n)$$
and the canonical trivialization 
$$\xi_{\infty}:\mathbb{R}\stackrel{\sim}{\longrightarrow} \Xi_{\infty}(\mathcal{X}/\mathbb{Z},n)\otimes\mathbb{R}$$
which is compatible with the trivializations (\ref{introtriv}) of $\Delta(\X/\bz,n)$ and $\Delta(\X/\bz,d-n)$ (see  \cite{Flach-Morin-16}[Prop. 5.29]). 
Denoting by $$x_{\infty}(\mathcal{X},n)^2\in\mathbb{R}_{>0}$$
the strictly positive real number such that $$\xi_{\infty}(x_{\infty}(\mathcal{X},n)^{-2}\cdot \mathbb{Z})=\Xi_{\infty}(\mathcal{X}/\bz,n)$$
the canonical isomorphism
$$\Xi_{\infty}(\mathcal{X}/\mathbb{Z},n)\otimes\Xi_{\infty}(\mathcal{X}/\mathbb{Z},d-n)\cong\bz$$
implies that
\begin{equation}x_{\infty}(\mathcal{X},n)^2\cdot x_{\infty}(\mathcal{X},d-n)^2=1.\label{sym}\end{equation}
It was then shown in \cite{Flach-Morin-16}[Cor. 5.31] that Theorem \ref{thm:fe} is equivalent to the following identity (note that there is a typo in the statement of \cite{Flach-Morin-16}[Cor. 5.31] and $C(\X,n)$ and $C(\X,d-n)$ have to be replaced by their inverses)
\begin{equation}\notag
\begin{split}
 & A(\mathcal{X})^{n/2} \cdot \zeta^*(\mathcal{X}_\infty,n)\cdot x_{\infty}(\mathcal{X},n)^{-1}\cdot C(\mathcal{X},n)
 \\
  =&\pm   A(\mathcal{X})^{(d-n)/2}\cdot \zeta^*(\mathcal{X}_\infty,d-n)\cdot x_{\infty}(\mathcal{X},d-n)^{-1}\cdot C(\mathcal{X},d-n).
 \end{split}
\end{equation}
But using (\ref{sym}) this identity is equivalent to the identity of Thm. \ref{thmintro}. This already concludes the proof of Thm. \ref{thm:fe}.
\end{proof}
%\begin{equation}\notag
%\pm A(\mathcal{X})^{n-d/2} \cdot \frac{\zeta^*(\mathcal{X}_\infty,n)}{\zeta^*(\mathcal{X}_\infty,d-n)}\cdot \frac{C(\mathcal{X},n)}{C(\mathcal{X},d-n)}=x_{\infty}(\mathcal{X},n)^{2}
%\end{equation}

It remains to prove Theorem \ref{thmintro} which we repeat here for the convenience of the reader.
\setcounter{thm}{1}

\begin{thm}
Let $\X$ be a regular scheme of dimension $d$, proper and flat over $\Spec(\bz)$.  Then we have 
\begin{equation*}
x_{\infty}(\mathcal{X},n)^{2}=\pm A(\mathcal{X})^{n-d/2} \cdot \frac{\zeta^*(\mathcal{X}_\infty,n)}{\zeta^*(\mathcal{X}_\infty,d-n)}\cdot \frac{C(\mathcal{X},n)}{C(\mathcal{X},d-n)}.
\end{equation*}
\end{thm}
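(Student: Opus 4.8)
The plan is to compute $x_\infty(\X,n)^2$ directly from the definition of $\Xi_\infty(\X/\bz,n)$ and the trivialization $\xi_\infty$ by breaking the invertible $\bz$-module into a ``Betti part'' and a ``de Rham part'' and tracking the discrepancy between the integral structures and the rational trivialization. First I would recall that $\xi_\infty$ is built from (i) the Betti comparison isomorphism $\lambda_B$ of subsection~2.3, which identifies $(\det_\bz R\Gamma_W(\X_\infty,\bz(n))\otimes\det^{-1}_\bz R\Gamma_W(\X_\infty,\bz(d-n)))_\bq$ with $(\det_\bz R\Gamma(\X(\bc),\bz(n)))_\bq$, and (ii) the de Rham part $\det^{-1}_\bz R\Gamma_{dR}(\X/\bz)/F^n\otimes\det_\bz R\Gamma_{dR}(\X/\bz)/F^{d-n}$, together with (iii) the period isomorphism between Betti and de Rham cohomology over $\bc$ underlying the Beilinson regulator comparison. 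The key point is that $\xi_\infty$ agrees on the nose with the composite $\lambda_B\otimes\lambda_{dR}$ followed by the classical Deligne period comparison for the motive $\bigoplus_i h^i(X)(n)[-i]$; this is exactly what was set up in \cite{Flach-Morin-16}[5.7].

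Then I would feed in the three computations from the body of the paper. Proposition~\ref{final-bettiprop} gives
\[ \lambda_B\left(\det_\bz R\Gamma_W(\X_\infty,\bz(n))\otimes\det^{-1}_\bz R\Gamma_W(\X_\infty,\bz(d-n))\right)=\det_\bz R\Gamma(\X(\bc),\bz(n))\cdot 2^{d_-(\X,n)-d_+(\X,n)}, \]
which supplies the power of $2$. Corollary~\ref{dRfactor} gives the power of $A(\X)$ coming from the de Rham side, namely the factor $A(\X)^{d-n}$ relating $\det^{-1}_\bz R\Gamma_{dR}(\X/\bz)/F^n\otimes\det_\bz R\Gamma_{dR}(\X/\bz)/F^{d-n}$ to $\det^{-1}_\bz R\Gamma_{dR}(\X/\bz)/F^d$. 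Finally Corollary~\ref{prcor} and Lemma~\ref{clemma} together show that the archimedean factor $\zeta^*(\X_\infty,n)/\zeta^*(\X_\infty,d-n)$ combines with $C(\X,n)/C(\X,d-n)$ to give precisely $\pm 2^{d_+(\X,n)-d_-(\X,n)}(2\pi)^{d_-(\X,n)+t_H(\X,n)}$, since the $\Gamma^*$-products cancel. What remains to be matched is the $(2\pi)$-power and the residual $A(\X)$-power against the ``volume'' of the period comparison: the Deligne period of $\bigoplus_i h^i(X)(n)[-i]$ relative to the $\bz$-lattices on both sides contributes a power of $2\pi i$ equal to (up to sign) $(2\pi)^{d_-(\X,n)+t_H(\X,n)}$ by the standard computation with the Hodge decomposition and $F_\infty$-eigenspaces (this is the content of the period computation behind Lemma~\ref{pr} read ``in reverse''), and the integral de Rham lattice over $\bz$ versus the one over $\bq$ on $X$ contributes nothing beyond what Corollary~\ref{dRfactor} already records, while the half-power $A(\X)^{-d/2}$ appears because the symmetry $A(\X)^{(d-s)/2}$ in the functional equation forces us to distribute the $A(\X)^{d-n}$ from the de Rham side symmetrically, i.e. write $A(\X)^{d-n}=A(\X)^{n-d/2}\cdot A(\X)^{d-n}/A(\X)^{n-d/2}$ and absorb the excess into the $(\X,d-n)$ term using $x_\infty(\X,n)^2x_\infty(\X,d-n)^2=1$.

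Concretely the steps are: (1) unwind $\xi_\infty$ into $\lambda_B$, $\lambda_{dR}$, and the archimedean period map $\alpha:\det_\bz R\Gamma(\X(\bc),\bz(n))_\bc\cong \det_\bz R\Gamma_{dR}(\X/\bz)/F^d{}_\bc$; (2) apply Proposition~\ref{final-bettiprop} to replace the Weil-\'etale determinants by $\det_\bz R\Gamma(\X(\bc),\bz(n))$ up to the explicit power of $2$; (3) apply Corollary~\ref{dRfactor} to replace the two de Rham quotient determinants by $\det^{-1}_\bz R\Gamma_{dR}(\X/\bz)/F^d$ up to $A(\X)^{d-n}$; (4) evaluate the period map $\alpha$ on the resulting integral lattices, which by the standard Hodge-theoretic computation (the same one underlying Lemma~\ref{pr}) yields a factor $\pm(2\pi)^{-(d_-(\X,n)+t_H(\X,n))}$ times a rational number that is in fact a unit (because on each $h^i(X)$ the Betti and de Rham $\bz$-lattices over $X$ are related by a unimodular comparison, there being no further contribution here); (5) collect the powers of $2$, $2\pi$, and $A(\X)$, and use Corollary~\ref{prcor} and Lemma~\ref{clemma} in reverse to recognise the product $2^{d_+-d_-}(2\pi)^{d_-+t_H}$ as $\pm\zeta^*(\X_\infty,n)/\zeta^*(\X_\infty,d-n)\cdot C(\X,n)/C(\X,d-n)$; (6) rewrite $A(\X)^{d-n}$ as $A(\X)^{n-d/2}$ times a factor which, by the identity $x_\infty(\X,n)^2x_\infty(\X,d-n)^2=1$ and the evident symmetry of all the other terms under $n\leftrightarrow d-n$, must be exactly $A(\X)^{-(n-d/2)}$ applied to the $(d-n)$-side, giving the symmetric form $A(\X)^{n-d/2}$.

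The main obstacle, I expect, is step~(4): pinning down that the archimedean period comparison $\alpha$, evaluated on the specific $\bz$-integral structures coming from $R\Gamma(\X(\bc),\bz(n))$ on the Betti side and $R\Gamma_{dR}(\X/\bz)/F^d$ on the de Rham side, introduces exactly the power $(2\pi)^{d_-(\X,n)+t_H(\X,n)}$ and nothing else (in particular no extra power of $2$ or of a prime beyond $A(\X)$). This requires being careful about: the difference between $R\Gamma(\X(\bc),\bz(n))$ and $R\Gamma(\X(\bc),(2\pi i)^n\bz)$-twisted cohomology and the role of the diagram~\eqref{noncan}; the distinction between the full de Rham complex on the arithmetic scheme and the one on the generic fibre (handled by Corollary~\ref{dRfactor}, but one must check no double-counting); and the fact that on $X/\bq$ the comparison $H^i_{dR}(X/\bq)\otimes\bc\cong H^i(\X(\bc),\bc)$ carries the $\bz$-lattice $H^i_{dR}(\X/\bz)/(\mathrm{torsion})$ to a lattice commensurable with the Betti one in a way that is $2\pi i$-homogeneous on each Hodge piece---this is where one genuinely uses that $\X$ is regular and proper so that coherent duality (Proposition~\ref{drduality}) controls the integral de Rham structure. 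Once step~(4) is in hand, steps~(5) and~(6) are bookkeeping with the already-established lemmas.
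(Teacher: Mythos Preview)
Your steps (1)--(3) and (5) match the paper's proof, but step (4) contains a genuine error and step (6) cannot repair it.

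In step (4) you assert that the period comparison $\alpha$ between $\det_\bz R\Gamma(\X(\bc),\bz(n))$ and $\det_\bz R\Gamma_{dR}(\X/\bz)/F^d$ yields only a power of $2\pi$ times a unit, because ``the Betti and de Rham $\bz$-lattices over $X$ are related by a unimodular comparison''. This is false already for $\X=\Spec(\co_F)$: the de Rham lattice $\co_F$ with the trace form has discriminant $D_F$, not $\pm 1$. In general the paper computes the period $P$ (defined by $b_{dR}=P\cdot b_B$) by comparing the discriminants of Poincar\'e duality on the two sides: the Betti pairing on $R\Gamma(\X(\bc),\bz(n))$ is unimodular, whereas Proposition~\ref{drduality}(b) shows the de Rham pairing (\ref{disc}) has discriminant $\pm A(\X)^d$. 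Since the comparison isomorphism is compatible with cup product and trace maps, one gets $P^2=\pm(2\pi i)^{2t_H(\X,n)}A(\X)^d$, hence $P\cdot(2\pi i)^{d_-(\X,n)}=\pm(2\pi)^{d_-(\X,n)+t_H(\X,n)}A(\X)^{d/2}$. It is precisely this extra $A(\X)^{d/2}$ from the period that combines with the $A(\X)^{d-n}$ of Corollary~\ref{dRfactor} to produce $A(\X)^{d/2-n}$.

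Your step (6) cannot supply the missing $A(\X)^{d/2}$. The identity $x_\infty(\X,n)^2x_\infty(\X,d-n)^2=1$ is a consistency check, not a redistribution mechanism: with your step (4) you would obtain $x_\infty(\X,n)^{-2}=A(\X)^{d-n}\cdot 2^{d_--d_+}(2\pi)^{-(d_-+t_H)}$, and since the $2$- and $(2\pi)$-factors are already antisymmetric under $n\leftrightarrow d-n$, the product $x_\infty(\X,n)^{-2}x_\infty(\X,d-n)^{-2}$ would equal $A(\X)^{d-n}\cdot A(\X)^{n}=A(\X)^d\neq 1$. So the symmetry detects your error rather than fixing it. The missing idea is exactly the discriminant computation of the Betti--de Rham period via Proposition~\ref{drduality}(b).
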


\setcounter{section}{5}
\setcounter{thm}{0}
\begin{proof}
 By Corollary \ref{prcor} and Lemma \ref{clemma}  this identity is equivalent to 
\begin{equation}\label{toward3}
x_{\infty}(\mathcal{X},n)^{2}=\pm A(\mathcal{X})^{n-d/2} \cdot 2^{d_+(\X,n)-d_-(\X,n)}\cdot(2\pi)^{d_-(\X,n)+t_H(\X,n)}.
\end{equation}

\begin{lem} The isomorphism $\xi_\infty$ is induced by the sequence of isomorphisms
\begin{align} &\left(\mydet_\bz R\Gamma_W(\mathcal{X}_{\infty},\mathbb{Z}(n))\otimes  \mathrm{det}^{-1}_{\mathbb{Z}}R\Gamma_{W}(\mathcal{X}_{\infty},\mathbb{Z}(d-n))\right)_\br\label{deRhamstructure}\\
 \xrightarrow{(\ref{firstiso})_\br} \,\,&\mydet_\br \left( R\Gamma(\mathcal{X}(\mathbb{C}),\br(n))^+\oplus R\Gamma(\mathcal{X}(\mathbb{C}),\br(n-1))^+\right) \notag\\
 \xrightarrow{(\ref{pmdecomp})}\,\, &\mydet_\br R\Gamma(\mathcal{X}(\mathbb{C}),\bc)^+ \notag\\
 \xrightarrow{(\ref{bettidR})^+}\,\, &\mydet_\br R\Gamma_{dR}(\mathcal{X}_\bc/\bc)^+ \notag\\
 \simeq\,\, &\mydet_\br R\Gamma_{dR}(\mathcal{X}_\br/\br)\simeq\left(\mydet_{\mathbb{Z}} R\Gamma_{dR}(\mathcal{X}/\mathbb{Z})/F^d\right)_\br\notag\\
 \xrightarrow{\lambda_{dR}^{-1}}\,\, &\left(\mathrm{det}_{\mathbb{Z}} R\Gamma_{dR}(\mathcal{X}/\mathbb{Z})/F^n
\otimes\mathrm{det}_{\mathbb{Z}}^{-1}R\Gamma_{dR}(\mathcal{X}/\mathbb{Z})/F^{d-n}\right)_\br\notag
 \end{align}
 where $\lambda_{dR}$ was defined in Cor. \ref{dRfactor}.
\label{xilemma}\end{lem}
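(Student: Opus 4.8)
The plan is to prove Lemma \ref{xilemma} by unwinding the construction of $\xi_\infty$ given in \cite{Flach-Morin-16}[5.7] and checking that, once transcribed into the notation of the present paper (with $R\Gamma_{dR}(\mathcal{X}/\mathbb{Z})/F^n$ in place of $R\Gamma(\X_{Zar},L\Omega_{\X/\bz}^{<n})$), it agrees term by term with the displayed composite. No new geometric input is needed: the statement is a bookkeeping compatibility between the definition in loc. cit. and the explicit maps established in Sections \ref{verdier}--\ref{archimedean}.

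First I would recall that in \cite{Flach-Morin-16} the trivialization $\xi_\infty$ is assembled from three ingredients. (i) The rational comparison $R\Gamma_W(\mathcal{X}_{\infty},\mathbb{Z}(n))_\bq\cong R\Gamma(\mathcal{X}(\mathbb{C}),\bq(n))^+$, and likewise for $d-n$, which is exactly the isomorphism underlying $(\ref{firstiso})$ and is reviewed in Section \ref{verdier}. (ii) The Betti--de Rham period isomorphism $R\Gamma(\mathcal{X}(\mathbb{C}),\bc)\cong R\Gamma_{dR}(\mathcal{X}_\bc/\bc)$ together with its compatibility with the Hodge filtration and with complex conjugation; this furnishes the arrows $(\ref{pmdecomp})$ and $(\ref{bettidR})^+$ along with the tautological identifications $\mydet_\br R\Gamma_{dR}(\mathcal{X}_\bc/\bc)^+\cong\mydet_\br R\Gamma_{dR}(\mathcal{X}_\br/\br)\cong(\mydet_{\mathbb{Z}}R\Gamma_{dR}(\mathcal{X}/\mathbb{Z})/F^d)_\br$. (iii) Poincar\'e duality for algebraic de Rham cohomology of $X=\X_\bq$, used in loc. cit. to relate $\mydet_\bq R\Gamma_{dR}(X/\bq)$ to $\mydet_\bq R\Gamma_{dR}(X/\bq)/F^n\otimes\mydet_\bq^{-1}R\Gamma_{dR}(X/\bq)/F^{d-n}$ --- which over $\bz$ is precisely the isomorphism $\lambda_{dR}$ of Corollary \ref{dRfactor}. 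Matching (i), (ii), (iii) against $(\ref{firstiso})_\br$, the pair $(\ref{pmdecomp})$ followed by $(\ref{bettidR})^+$, and $\lambda_{dR}^{-1}$ respectively then yields the lemma, since the composite recovers the base change to $\br$ of $\xi_\infty$ (equivalently, it trivializes $\Xi_{\infty}(\mathcal{X}/\mathbb{Z},n)^{-1}\otimes\br$ in the way dictated by loc. cit., the Weil-\'etale factors being inverted relative to $\Xi_\infty$).

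The step requiring care --- and the one I expect to be the main obstacle --- is the bookkeeping of real structures and of powers of $2\pi i$ around the arrow $(\ref{pmdecomp})$. Here one must verify that
$$\mydet_\br\big(R\Gamma(\mathcal{X}(\mathbb{C}),\br(n))^+\oplus R\Gamma(\mathcal{X}(\mathbb{C}),\br(n-1))^+\big)\xrightarrow{(\ref{pmdecomp})}\mydet_\br R\Gamma(\mathcal{X}(\mathbb{C}),\bc)^+$$
is indeed the identification used in \cite{Flach-Morin-16}: one uses that multiplication by $2\pi i$ carries $R\Gamma(\mathcal{X}(\mathbb{C}),\br(n-1))^+$ isomorphically onto $R\Gamma(\mathcal{X}(\mathbb{C}),\br(n))^-$, as recorded in diagram (\ref{noncan}), so that the direct sum is identified with $R\Gamma(\mathcal{X}(\mathbb{C}),\br(n))^+\oplus R\Gamma(\mathcal{X}(\mathbb{C}),\br(n))^-=R\Gamma(\mathcal{X}(\mathbb{C}),\bc(n))$, and then that the $F_\infty$-invariants of $R\Gamma(\mathcal{X}(\mathbb{C}),\bc(n))\cong R\Gamma(\mathcal{X}(\mathbb{C}),\bc)$ match $R\Gamma_{dR}(\mathcal{X}_\br/\br)$ under the period comparison $(\ref{bettidR})$. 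Granting these compatibilities, which are straightforward but notation-heavy, the displayed composite is by construction the trivialization $\xi_\infty$, which is what is asserted; this is what will subsequently feed into the computation of $x_\infty(\mathcal{X},n)^2$ via Prop. \ref{final-bettiprop} and Cor. \ref{dRfactor}.
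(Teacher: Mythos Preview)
Your outline is essentially correct and follows the same approach as the paper's proof: both amount to unwinding the definition of $\xi_\infty$ from \cite{Flach-Morin-16}[Prop.~5.29] and matching the pieces against the displayed composite. The paper, however, makes explicit what you leave implicit: the definition of $\xi_\infty$ there passes through \emph{Deligne cohomology} and its duality $R\Gamma_{\mathcal D}(\mathcal X_{/\br},\br(n))\simeq R\Gamma_{\mathcal D}(\mathcal X_{/\br},\br(d-n))^*[-2d+1]$, and the crux of the argument is a commutative diagram showing that this duality (first over $\bc$, then taking $G_\br$-invariants) decomposes into exactly the Betti Poincar\'e duality~(\ref{bettipd}), the de~Rham Poincar\'e duality~(\ref{pd})$_\bc$, the coefficient splitting~(\ref{pmdecomp}), and the comparison~(\ref{bettidR}). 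Your ingredients (i)--(iii) are the right ones, but you should state that they arise by dismantling the Deligne-cohomology triangle rather than being the primitive definition.

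One small point: your description of the step~(\ref{pmdecomp}) via the map $\tau$ of~(\ref{noncan}) is slightly tangled. The map~(\ref{pmdecomp}) in the lemma is simply the coefficient decomposition $\bc\cong\br(n)\oplus\br(n-1)$, taken on $G_\br$-invariants; it does not itself involve multiplication by $2\pi i$. The isomorphism $\tau$ is already absorbed into the definition of~(\ref{firstiso}) (it is what identifies $R\Gamma(\mathcal X(\bc),\bq(n-1))^+$ with $R\Gamma(\mathcal X(\bc),\bq(n))^-$ there), so invoking it again at~(\ref{pmdecomp}) would double-count. In the paper's diagram the right-hand column is literally the exact sequence on coefficients $\br(n)\to\bc\to\br(n-1)$, which is the content of~(\ref{pmdecomp}); keeping this straight is what makes the later period computation via~(\ref{BdR}) go through cleanly.
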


\begin{proof} The isomorphism $\xi_\infty$ was defined in \cite{Flach-Morin-16}[Prop. 5.29] 
\begin{eqnarray*}
&&\left(\mathrm{det}_{\mathbb{Z}}R\Gamma_{W}(\mathcal{X}_{\infty},\mathbb{Z}(n))\otimes_{\mathbb{Z}}\mathrm{det}^{-1}_{\mathbb{Z}} R\Gamma_{dR}(\mathcal{X}/\mathbb{Z})/F^n\right)_\mathbb{R}\\
&\simeq&
\mathrm{det}_{\mathbb{R}}R\Gamma_{\mathcal{D}}(\mathcal{X}/\mathbb{R},\mathbb{R}(n))\\
&\simeq&
\mathrm{det}_{\mathbb{R}} R\Gamma_{\mathcal{D}}(\mathcal{X}/\mathbb{R},\mathbb{R}(d-n))^*[-2d+1]\\
&\simeq&
\mathrm{det}_{\mathbb{R}}R\Gamma_{\mathcal{D}}(\mathcal{X}/\mathbb{R},\mathbb{R}(d-n))\\
&\simeq&\left(\mathrm{det}_{\mathbb{Z}}R\Gamma_{W}(\mathcal{X}_{\infty},\mathbb{Z}(d-n))\otimes_{\mathbb{Z}}\mathrm{det}_{\mathbb{Z}}^{-1} R\Gamma_{dR}(\mathcal{X}/\mathbb{Z})/F^{d-n}\right)_\mathbb{R}
\end{eqnarray*}
using the defining exact triangle
$$ (R\Gamma_{dR}(\mathcal{X}/\mathbb{Z})/F^{n})_\br[-1] \to R\Gamma_{\mathcal{D}}(\mathcal{X}_{/\mathbb{R}},\mathbb{R}(n))\to R\Gamma(\mathcal{X}(\mathbb{C}),\br(n))^+ $$ and duality 
$$R\Gamma_{\mathcal{D}}(\mathcal{X}_{/\mathbb{R}},\mathbb{R}(n))\simeq R\Gamma_{\mathcal{D}}(\mathcal{X}_{/\mathbb{R}},\mathbb{R}(d-n))^*[-2d+1]$$
for Deligne cohomology. This duality is constructed in \cite{Flach-Morin-16}[Lemma 2.3] by taking $G_\br$-invariants in 
\begin{equation} R\Gamma_{\mathcal{D}}(\mathcal{X}_{/\mathbb{C}},\mathbb{R}(n))\simeq R\Gamma_{\mathcal{D}}(\mathcal{X}_{/\mathbb{C}},\mathbb{R}(d-n))^*[-2d+1] \label{deldual}\end{equation}
which is obtained as follows. Dualizing the defining exact triangle
$$ (R\Gamma_{dR}(\mathcal{X}/\mathbb{Z})/F^{d-n})_\bc[-1] \to R\Gamma_{\mathcal{D}}(\mathcal{X}_{/\mathbb{C}},\mathbb{R}(d-n))\to R\Gamma(\mathcal{X}(\mathbb{C}),\br(d-n)) $$
and using Poincar\'e duality (\ref{bettipd}) and (\ref{pd})$_\bc$ on $\X(\bc)$ we obtain the bottom exact triangle in the diagram
$$ \begin{CD}  {} @. R\Gamma(\mathcal{X}(\mathbb{C}),\br(n))[-1]@=R\Gamma(\mathcal{X}(\mathbb{C}),\br(n))[-1] \\
@. @VVV @VVV \\
F^n_\bc @<<< (R\Gamma_{dR}(\mathcal{X}/\mathbb{Z})/F^{n})_\bc[-1]  @<\beta<<  R\Gamma(\mathcal{X}(\mathbb{C}),\bc)[-1]\\
\Vert @. @VVV @VVV\\
F^n_\bc @<<< R\Gamma_{\mathcal{D}}(\mathcal{X}_{/\mathbb{C}},\mathbb{R}(d-n))^*[-2d+1] @<<< R\Gamma(\mathcal{X}(\mathbb{C}),\br(n-1))[-1] \end{CD}$$
The right hand column is induced by the decomposition 
\begin{equation} \bc\cong \br(n)\oplus\br(n-1) \label{pmdecomp}\end{equation}
on coefficients, and the map $\beta$ is the comparison isomorphism
\begin{equation} R\Gamma(\mathcal{X}(\mathbb{C}),\bc) \simeq R\Gamma_{dR}(\mathcal{X}_\bc/\mathbb{C})\label{bettidR}\end{equation}
composed with the natural projection. It is then clear that all rows and columns in the diagram are exact, and the middle column is the defining exact triangle of 
$R\Gamma_{\mathcal{D}}(\mathcal{X}_{/\mathbb{C}},\mathbb{R}(n))$, giving (\ref{deldual}). Recalling that (\ref{firstiso}) was also defined using  Poincar\'e duality (\ref{bettipd}) we find that the isomorphisms used in (\ref{deRhamstructure}) are precisely those used in the construction of (\ref{deldual})$^+$.
\end{proof}

We call the real line $\mydet_\br R\Gamma(\mathcal{X}(\mathbb{C}),\bc)^+$ the de Rham real structure of $\mydet_\bc R\Gamma(\X(\bc),\bc)$
and the real line $\mydet_\br R\Gamma(\X(\bc),\br(n))$ the Betti real structure of $\mydet_\bc R\Gamma(\X(\bc),\bc)$. By (\ref{noncan}) we have 
\begin{equation}\mydet_\br R\Gamma(\mathcal{X}(\mathbb{C}),\bc)^+ \cdot (2\pi i)^{d_{-}(\X,n)}= \mydet_\br R\Gamma(\X(\bc),\br(n)).\label{BdR}\end{equation}
In the remaining computations of the proof of Theorem \ref{thmintro} all identities should be understood up to sign. We choose bases of the various $\bz$-structures of the de Rham real line appearing in (\ref{deRhamstructure}) 
\begin{align*} \bz\cdot\tilde{b}_B=\,\,&\mydet_\bz R\Gamma_W(\mathcal{X}_{\infty},\mathbb{Z}(n))\otimes  \mathrm{det}^{-1}_{\mathbb{Z}}R\Gamma_{W}(\mathcal{X}_{\infty},\mathbb{Z}(d-n))\notag\\
 \bz\cdot b_{dR}  =\,\,& \mydet_{\mathbb{Z}} R\Gamma_{dR}(\mathcal{X}/\mathbb{Z})/F^d\notag\\
 \bz\cdot\tilde{b}_{dR} =\,\,&\mathrm{det}_{\mathbb{Z}} R\Gamma_{dR}(\mathcal{X}/\mathbb{Z})/F^n
\otimes\mathrm{det}_{\mathbb{Z}}^{-1}R\Gamma_{dR}(\mathcal{X}/\mathbb{Z})/F^{d-n}\notag
 \end{align*}
and we also choose a basis
$$ \bz\cdot b_B=\mydet_\bz R\Gamma(\X(\bc),\bz(n))$$
of the natural $\bz$-structure in the Betti real structure. Let $P\in\bc^\times$ be the Betti-de Rham period, i.e. we have
$$ b_{dR}=P\cdot b_B$$
under the comparison isomorphism (\ref{bettidR}). 

\begin{lem} Let $\varepsilon_B\in\{\pm 1\}$ be the discriminant (see Remark \ref{rmk:disc}) of the Poincar\'e duality pairing 
$$ R\Gamma(\X(\bc),\bz(n))\otimes R\Gamma(\X(\bc),\bz(n)) \xrightarrow{\mathrm{Tr}\circ\cup}\bz[-2d+2] $$
and $\varepsilon_{dR}\cdot A(\X)^d$ the discriminant of the deRham duality pairing (\ref{disc}). Then
$$ P=\sqrt{\varepsilon_B\varepsilon_{dR}} \cdot (2\pi i)^{t_H(\X,n)} \cdot A(\X)^{\frac{d}{2}}.$$
Moreover $P\cdot (2\pi i)^{d_{-}(\X,n)}$ is real and hence we have
$$P\cdot (2\pi i)^{d_{-}(\X,n)}=(2\pi)^{d_{-}(\X,n)+t_H(\X,n)} \cdot A(\X)^{\frac{d}{2}}.$$
\end{lem}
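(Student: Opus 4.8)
The plan is to extract $P^2$ by comparing the two Poincar\'e duality pairings at our disposal --- the topological one on the closed orientable $(2d-2)$-manifold $\X(\bc)$ and the de Rham one of Proposition~\ref{drduality} --- through the comparison isomorphism (\ref{bettidR}), and then to remove the remaining sign and $2\pi i$-phase using the reality statement (\ref{BdR}). Write $e=d-1$ and $\chi=\chi(\X(\bc))=\sum_i(-1)^i\dim_\bc H^i(\X(\bc),\bc)=\sum_{p,q}(-1)^{p+q}h^{p,q}$; as in the surrounding text all identities are understood up to sign.

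First I would transport everything to the single complex line $L:=\mydet_\bc R\Gamma(\X(\bc),\bc)$, which via (\ref{bettidR}) is also $\mydet_\bc R\Gamma_{dR}(\X_\bc/\bc)$. Since (\ref{bettidR}) is an isomorphism of graded rings it identifies the de Rham and Betti cup products, so the de Rham pairing (\ref{disc}) and the topological pairing $R\Gamma(\X(\bc),\bz(n))\otimes R\Gamma(\X(\bc),\bz(n))\xrightarrow{\mathrm{Tr}\circ\cup}\bz[-2d+2]$ differ only through their trace maps on $H^{2d-2}$. The de Rham trace (equivalently the algebraic trace $H^{2e}(\X(\bc),\bz(e))\xrightarrow{\sim}\bz$) differs, as a $\bc$-linear functional on $H^{2d-2}(\X(\bc),\bc)$, from the trace implicit in the lemma's Betti pairing --- cup product into $\bz(2n)$-coefficients, degree map to $\bz(2n-e)$, then the tautological trivialization $\bz(2n-e)\cong\bz$ --- by the factor $(2\pi i)^{2n-e}$, this being the standard normalization discrepancy between the algebraic and topological cycle class of a point. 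Because $\mydet_\bc$ rescales a quasi-isomorphism multiplied by a scalar $\mu$ by the power $\mu^{\chi}$, the two induced pairings on $L$ therefore differ by the scalar $(2\pi i)^{(2n-e)\chi}$.

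Now I would evaluate. By Proposition~\ref{drduality}(b) and Remark~\ref{rmk:disc} the de Rham pairing sends $(b_{dR},b_{dR})$ to $\varepsilon_{dR}A(\X)^d$; the topological Poincar\'e pairing is unimodular on the closed orientable $(2d-2)$-manifold $\X(\bc)$, so it sends $(b_B,b_B)$ to $\varepsilon_B=\pm1$. Substituting $b_{dR}=P\cdot b_B$ gives $\varepsilon_{dR}A(\X)^d=(2\pi i)^{(2n-e)\chi}\cdot P^2\cdot\varepsilon_B$, i.e.
\[ P^2=\varepsilon_B\varepsilon_{dR}\,A(\X)^d\,(2\pi i)^{-(2n-e)\chi}. \]
To recognize the exponent, the Hodge symmetry $h^{p,q}=h^{e-p,e-q}$ on the smooth proper $(d-1)$-fold $\X(\bc)$ forces $2\sum_{p,q}(-1)^{p+q}p\,h^{p,q}=e\chi$, i.e. $t_H(\X,0)=e\chi/2$; together with $t_H(\X,n)=t_H(\X,0)-n\chi$ this yields $-(2n-e)\chi/2=t_H(\X,n)$, so that $P=\sqrt{\varepsilon_B\varepsilon_{dR}}\,(2\pi i)^{t_H(\X,n)}\,A(\X)^{d/2}$.

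For the final assertion, $b_{dR}$ generates the lattice $\mydet_\bz R\Gamma_{dR}(\X/\bz)/F^d$ whose real span is $\mydet_\br R\Gamma_{dR}(\X_\br/\br)=\mydet_\br R\Gamma(\X(\bc),\bc)^+$, while $b_B$ generates $\mydet_\bz R\Gamma(\X(\bc),\bz(n))\subset\mydet_\br R\Gamma(\X(\bc),\br(n))$; so by (\ref{BdR}) the ratio $P$ lies in $(2\pi i)^{-d_-(\X,n)}\br^\times$, i.e. $P\cdot(2\pi i)^{d_-(\X,n)}\in\br$. Multiplying the formula for $P$ by $(2\pi i)^{d_-(\X,n)}$ and using that the product is real while $A(\X)^{d/2}>0$, the factor $\sqrt{\varepsilon_B\varepsilon_{dR}}\,(2\pi i)^{t_H(\X,n)+d_-(\X,n)}$ is forced to be real and hence equal to $\pm(2\pi)^{t_H(\X,n)+d_-(\X,n)}$, giving $P\cdot(2\pi i)^{d_-(\X,n)}=(2\pi)^{d_-(\X,n)+t_H(\X,n)}A(\X)^{d/2}$. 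The delicate point is the second paragraph: arranging the bookkeeping of the coefficient twists and of the two trace normalizations so that the $2\pi i$-power comes out to be exactly $(2\pi i)^{(2n-e)\chi}$; once that is settled the rest is formal given Proposition~\ref{drduality} and (\ref{BdR}).
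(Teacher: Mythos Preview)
Your proof is correct and follows essentially the same route as the paper: compare the Betti and de Rham Poincar\'e pairings through the comparison isomorphism to extract $P^2$, identify the $(2\pi i)$-exponent with $2t_H(\X,n)$, and then use (\ref{BdR}) for the reality statement. The paper packages the trace-normalization bookkeeping into a single commutative diagram showing that the complex pairing restricts to a $\bq\cdot(2\pi i)^{(2n-d+1)\chi}$-valued pairing on $\mydet_\bz R\Gamma(\X(\bc),\bz(n))$, and computes $-(2n-d+1)\chi=2t_H(\X,n)$ via Poincar\'e duality on Betti cohomology rather than via Hodge symmetry as you do, but these are the same computation and the argument is otherwise identical.
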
 

\begin{proof}  We have a commutative diagram 
$$ \begin{CD} R\Gamma(\X(\bc),\bz(n))\otimes R\Gamma(\X(\bc),\bz(n)) @>{\mathrm{Tr}\circ\cup}>>\bz(2n-d+1)[-2d+2] \\
@VVV  @VVV\\
R\Gamma(\X(\bc),\bc)\otimes R\Gamma(\X(\bc),\bc) @>{\mathrm{Tr}\circ\cup}>>\bc[-2d+2]\\
@AAA @AAA\\
R\Gamma_{dR}(\X/\bz)/F^d\otimes_\bz^L R\Gamma_{dR}(\X/\bz)/F^d @>(\ref{disc})>> \bz[-2d+2]
\end{CD}$$ 
where the bottom square commutes since the comparison isomorphism 
$$R\Gamma(\mathcal{X}(\mathbb{C}),\bc) \simeq R\Gamma_{dR}(\mathcal{X}_\bc/\mathbb{C})\simeq R\Gamma_{dR}(\mathcal{X}_\bq/\bq)_\bc$$
is compatible with cup product and cycle classes, and the trace map sends the cycle class of a closed point to its degree over the base field. We also use the fact that the trace map in algebraic de Rham cohomology 
$$H^{2d-2}_{dR}(\X_\bq/\bq)\xleftarrow{\sim}H^{d-1}(\X_\bq,\Omega_{\X_\bq/\bq}^{d-1})\xrightarrow{\mathrm{Tr}} \bq$$
is the base change of the Trace map (\ref{ddrtrace}) under $\Spec(\bq)\to\Spec(\bz)$ by \cite{verdier68}. Applying the construction of Remark \ref{rmk:disc} we then obtain a pairing 
$$ \langle \cdot,\cdot \rangle : \mydet_\bc R\Gamma(\X(\bc),\bc)\otimes_\bc\mydet_\bc R\Gamma(\X(\bc),\bc) \simeq \bc$$
which restricts to the corresponding $\bq$-valued pairing on $\mydet_\bz R\Gamma_{dR}(\X/\bz)/F^d$ and a $\bq\cdot(2\pi i)^{(2n-d+1)\chi}$-valued pairing on $\mydet_\bz R\Gamma(\X(\bc),\bz(n))$. Here 
$$\chi=\rank_\bz R\Gamma(\X(\bc),\bz(n)) = \dim_\br R\Gamma(\X(\bc),\br(n))= \sum_i (-1)^i \dim_\br H^i(\X(\bc),\br(n)).$$

 We then have
$$ \varepsilon_{dR}\cdot A(\X)^d=\langle b_{dR},b_{dR}\rangle = P^2 \langle b_B,b_B\rangle= P^2 \varepsilon_B (2\pi i)^{(2n-d+1)\chi}$$
and moreover
\begin{align*} -(2n-d+1)\chi =& \sum_{i<d-1} (-1)^i (i-2n + 2d-2-i-2n) \dim_\br H^i(\X(\bc),\br(n)) \\
+&(-1)^{d-1}(d-1-2n)\dim_\br H^{d-1}(\X(\bc),\br(n))\\
=&2\,t_H(\X,n).
\end{align*}
Hence
$$ P^2 = \varepsilon_{dR}\varepsilon_B \cdot (2\pi i)^{2\,t_H(\X,n)}\cdot A(\X)^d$$
which shows the first statement.
Since both $b_B$ and $b_{dR}\cdot (2\pi i)^{d_{-}(\X,n)} = P\cdot (2\pi i)^{d_{-}(\X,n)}\cdot b_B$ lie in the Betti real structure, the factor
$P\cdot (2\pi i)^{d_{-}(\X,n)}$ is real. This proves the second statement.
\end{proof} 

\begin{rem} From the Lemma we deduce
$$ \varepsilon_B\varepsilon_{dR}=(-1)^{d_{-}(\X,n)+t_H(\X,n)} =(-1)^{d_{-}(\X,0)+\frac{d-1}{2}\chi}.$$
This generalizes the classical fact that the sign of the discriminant of a number field $F$ is $(-1)^{r_2}$ where $r_2=d_-(\Spec(\co_F),0)$ is the number of complex places. In this case $\varepsilon_B=1$.
\end{rem}

We can now  finish the proof of Theorem \ref{thmintro} by verifying (\ref{toward3}).  By Prop. \ref{final-bettiprop} we have
$$ \tilde{b}_{B}\cdot (2\pi i)^{d_{-}(\X,n)} = b_B \cdot 2^{d_-(\X,n)-d_+(\X,n)}$$
and by Corollary \ref{dRfactor} 
$$ \tilde{b}^{-1}_{dR} = A(\X)^{d-n}\cdot b^{-1}_{dR}.$$
Therefore
\begin{align*} x_\infty(\X,n)^{-2} = & \tilde{b}_B\cdot \tilde{b}_{dR}^{-1} =  (2\pi i)^{-d_{-}(\X,n)} \cdot 2^{d_-(\X,n)-d_+(\X,n)}\cdot b_B\cdot A(\X)^{d-n}\cdot b^{-1}_{dR}\\
= &  (2\pi i)^{-d_{-}(\X,n)} \cdot 2^{d_-(\X,n)-d_+(\X,n)}\cdot  A(\X)^{d-n}\cdot P^{-1}\\
= & (2\pi i)^{-d_{-}(\X,n)} \cdot 2^{d_-(\X,n)-d_+(\X,n)}\cdot  A(\X)^{d-n}\cdot 
\sqrt{\varepsilon_B\varepsilon_{dR}} \cdot (2\pi i)^{-t_H(\X,n)}\cdot A(\X)^{-\frac{d}{2}}\\
=& A(\X)^{\frac{d}{2}-n}\cdot 2^{d_-(\X,n)-d_+(\X,n)}\cdot  (2\pi)^{-d_{-}(\X,n)-t_H(\X,n)} 
\end{align*}
which is (\ref{toward3}).
\end{proof}

\begin{bibdiv}
\begin{biblist}
\bib{antieau19}{article}{
  author={Antieau, B.},
  title={Periodic cyclic homology and derived de Rham cohomology},
  journal={Ann. K-theory},
  volume={4},
  number={3},
  pages={505\ndash 519},
  date={2019},
}

\bib{beresrue10}{article}{
  author={Berthelot, P.},
  author={Esnault, H.},
  author={R\"ulling},
  title={Rational Points over finite fields for regular models of algebraic varieties of Hodge type $\geq 1$},
  journal={Annals of Math.},
  volume={176},
  date={2012},
  pages={413\ndash 508},
}

\bib{bms19}{article}{
  author={Bhatt, B.},
  author={Morrow, M.},
  author={Scholze, P.},
  title={Topological Hochschild homology and integral p-adic Hodge theory},
  journal={Publ. Math. IHES},
  date={2019},
  volume={129},
  number={1},
  pages={199\ndash 310},
}

\bib{bloch87a}{article}{
  author={Bloch, S.},
  title={Cycles on arithmetic schemes and Euler characteristics of curves},
  book={title={Bowdoin Conference in Algebraic Geometry 1985}, series={Proc. Symp. Pure Math. {\bf 58} 2 }, publisher={Amer. Math. Soc.}, date={1987}},
  pages={421\ndash 450},
}

\bib{Bloch87}{article}{
  author={Bloch, S.},
  title={de Rham cohomology and conductors of curves.},
  journal={Duke Math. J.},
  volume={54},
  date={1987},
  number={2},
  pages={295\ndash 308},
}

\bib{bk88}{article}{
  author={Bloch, S.},
  author={Kato, K.},
  title={L-functions and Tamagawa numbers of motives},
  booktitle={In: The Grothendieck Festschrift I, Progress in Math. \bf {86}},
  publisher={Birkh\"auser},
  place={Boston},
  date={1990},
  pages={333\ndash 400},
}

\bib{boxeretal}{article}{
  author={Boxer, G.},
  author={Calegari, F.},
  author={Gee, T.},
  author={Pilloni, V.},
  title={Abelian surfaces over totally real fields are potentially modular},
  eprint={http://arxiv.org/pdf/1812.09269.pdf},
}

\bib{Flach-Morin-16}{article}{
  author={Flach, M.},
  author={Morin, B.},
  title={Weil-\'etale cohomology and Zeta-values of proper regular arithmetic schemes},
  journal={Documenta Mathematica},
  volume={23},
  date={2018},
  pages={1425\ndash 1560},
}

\bib{Flach-Morin-18}{article}{
  author={Flach, M.},
  author={Morin, B.},
  title={Deninger's Conjectures and Weil-Arakelov cohomology},
  status={to appear in the M\"unster Journal of Mathematics},
}

\bib{Flach-Siebel}{article}{
  author={Flach, M.},
  author={Siebel, D.},
  title={Special values of the Zeta function of an arithmetic surface},
  status={preprint 2019},
}

\bib{fpr91}{article}{
  author={Fontaine, J.-M.},
  author={Perrin-Riou, B.},
  title={Autour des conjectures de Bloch et Kato: cohomologie galoisienne et valeurs de fonctions L},
  book={title={Motives (Seattle, WA, 1991)}, series={Proc. Sympos. Pure Math. \bf {55}, Part 1}, publisher={Amer. Math. Soc.}, date={1994}},
  pages={599\ndash 706},
}

\bib{Geisser06}{article}{
  author={Geisser, T.},
  title={Arithmetic cohomology over finite fields and special values of $\zeta $-functions},
  journal={Duke Math. Jour.},
  volume={133},
  number={1},
  date={2006},
  pages={27\ndash 57},
}

\bib{Illusie71}{book}{
  author={Illusie, L.},
  title={Complexe cotangent et d\'eformations. I.},
  series={Lecture Notes in Mathematics \bf {239}},
  publisher={Springer},
  date={1971},
}

\bib{Illusie72}{book}{
  author={Illusie, L.},
  title={Complexe cotangent et d\'eformations. II.},
  series={Lecture Notes in Mathematics \bf {283}},
  publisher={Springer},
  date={1972},
}

\bib{ks04}{article}{
  author={Kato, K.},
  author={Saito, T.},
  title={On the conductor formula of Bloch},
  journal={Publ. Math. IHES},
  number={100},
  date={2004},
  pages={5\ndash 151},
}

\bib{li18}{article}{
  author={Lichtenbaum, S.},
  title={The constant in the functional equation and derived exterior powers},
  eprint={http://arxiv.org/pdf/1810.08644.pdf},
}

\bib{li20}{article}{
  author={Lichtenbaum, S.},
  title={Special values of Zeta functions of regular schemes},
  status={preprint 4/22/20},
}

\bib{ml2000}{article}{
  author={Lindenstrauss, A.},
  author={Madsen, I.},
  title={Topological Hochschild homology of number rings},
  journal={Trans. Amer. Math. Soc.},
  volume={352},
  number={5},
  date={2000},
  pages={2179\ndash 2204},
}

\bib{majadas96}{article}{
  author={Majadas, J.},
  title={Derived deRham cohomology and cyclic homology},
  journal={Math. Scand.},
  volume={79},
  date={1996},
  pages={176\ndash 188},
}

\bib{Nikolaus-Scholze}{article}{
  author={Nikolaus, T.},
  author={Scholze, P.},
  title={On topological cyclic homology},
  journal={Acta Math.},
  volume={221},
  date={218},
  number={2},
  pages={203\ndash 409},
}

\bib{pr95}{book}{
  author={Perrin-Riou, B.},
  title={Fonctions $L$ $p$-adiques des repr\'esentations $p$-adiques},
  series={Ast\'erisque {\bf 229}},
  publisher={Soc. Math. France},
  date={1995},
}

\bib{tsaito88}{article}{
  author={Saito, T.},
  title={Self-Intersection 0-Cycles and Coherent Sheaves on Arithmetic Schemes},
  journal={Duke Math. J.},
  date={1988},
  volume={57},
  number={2},
  pages={555\ndash 578},
}

\bib{tsaito04}{article}{
  author={Saito, T.},
  title={Parity in Bloch's conductor formula in even dimensions},
  journal={Jour. de Th\'eorie des Nombres de Bordeaux},
  date={2004},
  volume={16},
  pages={403\ndash 421},
}

\bib{stacks}{book}{
  author={The Stacks Project Authors},
  title={The Stacks Project},
  publisher={stacks.math.columbia.edu},
  date={2019},
}

\bib{verdier68}{article}{
  author={Verdier, J.L.},
  title={Base change for twisted inverse image of coherent sheaves},
  book={title={International colloquium on algebraic geometry, Bombay 1968}},
}

%\bibselect{all-my-references}
\end{biblist}
\end{bibdiv}

\end{document}